\tikzstyle{dot}=[inner sep=1pt, fill, black, circle, draw, minimum size = 5pt]
\tikzstyle{highlight}=[inner sep=1pt, draw=black, fill=white, circle, minimum size = 9pt]
\setlist[itemize]{noitemsep}
\setlist[enumerate]{noitemsep}
\setlist[enumerate,1]{label = (\roman*)}
\newcommand{\eqrel}{ {\sim} }
\newcommand{\neqrel}{ {\nsim} }
\newtheorem{theorem}{Theorem}
\newtheorem{proposition}[theorem]{Proposition}
\theoremstyle{definition}
\newtheorem{definition}[theorem]{Definition}
\newtheorem{lemma}[theorem]{Lemma}
\newtheorem{corollary}[theorem]{Corollary}
\newcommand\dna{\mathtt{DNA}}
\newcommand\dnal[1]{\mathtt{#1}}
\newcommand\cpc{\mathtt{CPC}}
\newcommand\ipc{\mathtt{IPC}}
\newcommand\inqB{\mathtt{InqB}}
\newcommand\vari{\mathcal{V}}
\newcommand\dvari{\mathbb{D}}
\newcommand\class{\mathcal{C}}
\renewcommand{\restriction}{ {\upharpoonright} }
\newcommand{\dom}{\mathsf{dom}}
\newcommand{\depth}{\mathsf{depth}}
\newcommand{\width}{\mathsf{width}}
\newcommand{\height}{\mathsf{depth}}
\newcommand{\rank}{\mathsf{rank}}
\newcommand\at{\mathtt{AT}}
\newcommand{\espace}[1]{\mathfrak{#1}}
\newcommand{\clop}{\mathcal{C}}
\newcommand{\upset}{\mathcal{U}}
\newcommand{\Esa}{\mathsf{Esa}}
\newcommand{\sem}[1]{\llbracket #1 \rrbracket}
\newcommand{\Log}{Log}
\newcommand{\Int}{\mathrm{Int}}
\newcommand{\Cl}{\mathrm{Cl}}
\newcommand{\PF}{\mathcal{PF}}
\newcommand{\CU}{\mathcal{CU}}
\newcommand{\comp}[1]{\overline{#1}}
\newcommand{\cA}{\mathcal{A}}
\newcommand{\cC}{\mathcal{C}}
\newcommand{\cE}{\mathcal{E}}
\newcommand{\cX}{\mathcal{X}}
\newcommand{\fE}{\mathfrak{E}}
\newcommand{\fF}{\mathfrak{F}}
\newcommand{\fG}{\mathfrak{G}}
\newcommand{\fR}{\mathfrak{R}}
\newcommand{\fS}{\mathfrak{S}}
\newcommand{\BA}{\mathsf{BA}}
\newcommand{\HA}{\mathsf{HA}}
\newcommand{\HArfsi}{\mathsf{HA_{RFSI}}}
\newcommand\dep{\mathop{=\!}\xspace}
\newcommand\langInqI{\mathcal{L}_{\ipc}^\otimes}
\newcommand\langInt{\mathcal{L}_{\ipc}}
\title{Esakia Duals of Regular Heyting Algebras}
\date{\today}
\author{Gianluca Grilletti}
\author{Davide Emilio Quadrellaro}
\email{grilletti.gianluca@gmail.com}
\email{davide.quadrellaro@helsinki.fi}
\address{Munich Centre for Mathematical Philosophy,	Munich, Germany}
\address{Department of Mathematics and Statistics, University of Helsinki, P.O. Box 68 (Pietari Kalmin katu 5), 00014 Helsinki, Finland}
\thanks{The authors would like to thank Nick Bezhanishvili for suggesting the main question studied in this work. We are also grateful to Fan Yang for many helpful comments on the subject of this paper and to an anonymous reviewer for their very useful suggestions and observations.
}
\keywords{Heyting algebras, Esakia spaces, duality theory}
\subjclass[2020]{06D20, 03C05, 03B55}
\begin{document}
	
		\begin{abstract}
	   We investigate in this article regular Heyting algebras by means of Esakia duality. In particular, we give a characterisation of Esakia spaces  dual to regular Heyting algebras and we show that there are continuum-many varieties of Heyting algebras generated by regular Heyting algebras. We also study several logical applications of these classes of objects and we use them to provide novel topological completeness theorems for inquisitive logic, $\dna$-logics and dependence logic.
	\end{abstract}
\maketitle

\section*{Introduction}

A Heyting algebra is said to be \textit{regular} if it is generated by its subset of regular elements, i.e. elements $x$ which are identical to their double negation $\neg \neg x$. In this article we investigate regular Heyting algebras from the viewpoint of Esakia duality and we establish several connections to intermediate logics, inquisitive logic, dependence logic and  $\dna$-logic \cite{Ciardelli2011-CIAIL,Yang2016-YANPLO,bezhanishvili_grilletti_quadrellaro_2022}. 

Regular Heyting algebras have recently come to attention for their role in the algebraic semantics of inquisitive logic \cite{grilletti} and more generally of so-called $\dna$-logics \cite{Quadrellaro.2019,bezhanishvili_grilletti_quadrellaro_2022}. $\dna$-logics, for \textit{double negation on atoms}, make for an interesting generalisation of inquisitive logic which arises when considering translations of intermediate logics under the double negation map. More precisely, $\dna$-logics are those set of formulas $L^\neg$ which contain a formula $\phi$ whenever the intermediate logic $L$ contains $\phi[\sfrac{\overline{\neg p}}{\overline{p}}]$, namely  the formula obtained by replacing simultaneously $p$ by $\neg p$ for any variable $p$. Such logics were originally introduced in \cite{Miglioli1989-PIESRO} and it was shown already in \cite{Ciardelli.2009} that inquisitive logic is a paradigmatic example of them.

Regular Heyting algebras also play a role in dependence logic. The connection between the team semantics of dependence logic and Heyting algebras was originally pointed out in \cite[\S 3]{Abramsky2009-ABRFIT} and it was later proved in \cite{quadrellaro2021intermediate} that suitable expansions of regular Heyting algebras provide an algebraic semantics to propositional dependence logic. It was shown in \cite{nakov.quadrellaro} that such algebraic semantics, both for inquisitive, dependence and $\dna$-logics, are unique in the sense provided by a suitable notion of algebraizability for non-standard logics.

In this article we supplement the previous work on the subject by investigating regular Heyting algebras from the perspective of duality theory. Firstly, in \cref{preliminaries}, we review Esakia duality and the previous results on inquisitive and $\dna$-logics. In \cref{subsection:stoneSpaceMaximalElements} we provide detailed proofs for several folklore results on the relation between regular clopen upsets and the Stone subspaces of maximal elements of an Esakia space.

In \cref{sec.characterisation} we consider at length the main question of this article and we provide two characterisations of (finite)  Esakia spaces dual to (finite) regular Heyting algebras.  In \cref{morphism.characterisation} we give a first characterisation of finite regular Esakia spaces in terms of p-morphisms, while in \cref{quotient.characterisation} we provide a necessary condition for an arbitrary Esakia space to be regular based on suitable equivalence relations, and we also give an alternative description of finite regular Esakia spaces. These characterisations allow us to consider a problem originally posed to us by Nick Bezhanishvili in a personal communication: \textit{how many varieties of Heyting algebras are generated by regular Heyting algebras?} In \cref{cardinality.sublattice} we answer this question by showing that there are continuum-many of such varieties, complementing the result from \cite{bezhanishvili_grilletti_quadrellaro_2022} showing that the sublattice of  regularly generated varieties extending $\mathtt{ML}$ is dually isomorphic to $\omega+1$. 

Finally, in \cref{applications.logic}, we apply the previous results to the context of $\dna$-logics, inquisitive and dependence logic and we provide a  topological semantics to these logical systems. We conclude the paper in \cref{conclusion} by highlighting some possible ideas of further research.

\section{Preliminaries}\label{preliminaries}

We recall in this section the preliminary notions needed later in the paper.  We review the algebraic semantics of intermediate and $\dna$-logics, the Esakia duality between Heyting algebras and Esakia spaces, and fix some notational conventions used throughout the paper. We refer the reader to \cite{Burris.1981,9783030120955, Zakharyaschev.1997, Johnstone} for a detailed presentation of these notions and results.

\subsection{Orders, Lattices, Heyting Algebras}
For $(P,\leq)$ a partial order and $Q\subseteq P$ we indicate with $Q^\uparrow$ and $Q^\downarrow$ the upset and downset generated by $Q$ respectively, that is
\begin{equation*}
Q^\uparrow = \{ p\in P  \mid  \exists q \in Q. q \leq p \}
\qquad
Q^\downarrow = \{ p\in P  \mid  \exists q \in Q.q \geq p  \}.
\end{equation*}
For $p\in P$, we write $p^\uparrow$ and $p^\downarrow$ for the sets $\{p\}^\uparrow$ and $\{p\}^\downarrow$ respectively.
We call a set $Q$ such that $Q = Q^{\uparrow}$ an \emph{upset}, and similarly we call a set $R$ such that $R = R^{\downarrow}$ a \emph{downset}. Given a finite poset $P$, we define the depth $\depth(p)$ of an element $p\in P$ as the size of a maximal chain in $p^\uparrow\setminus\{p\}$. We define $\height(P):=\text{sup}\{\depth(p)+1 \mid p\in P  \}$ and $\width(P)$ as the size of the greatest antichain in $P$.	

A \emph{Heyting algebra} is a structure $(H,\land,\vee,\to,1,0)$ where $(H,\land,\vee,1,0)$ is a bounded distributive lattice and  $\to$ is a binary operation on $H$ such that for every $a,b,c\in H$ we have $a \leq b\to c$ if and only if $a \land b \leq c$. Henceforth, we will write $H$ to indicate a Heyting algebra (i.e., omitting the signature) for brevity.
We use the symbol $\HA$ to indicate the class of all Heyting algebras. The class $\HA$ of Heyting algebras is \emph{equationally defined}, that is, a \emph{variety}. With a slight abuse of notation, we also use the notation $\HA$ to indicate the \emph{category of Heyting algebras}, whose objects are Heyting algebras and whose arrows are algebra homomorphisms.

A \emph{Boolean algebra} $B$ is a Heyting algebra satisfying the equation $x=\neg \neg x$ for all $x\in B$. We write $\BA$ for both the class and the category of Boolean algebras. For any Heyting algebra $H$, we say that $x\in H$ is \emph{regular} if $x=\neg \neg x$ and we let $H_\neg:=\{x\in H \mid x=\neg \neg x \}$. One can verify that $H_\neg$ is a subalgebra of $H$ with respect to its $\{\land,\to,0,1  \}$-reduct and that it forms a Boolean algebra with join $x\dot{\lor}y:= \neg (\neg x\land \neg y) $. We say that a Heyting algebra is \textit{regular}, or \textit{regularly generated}, if $H=\langle H_\neg \rangle$, where $ \langle H_\neg \rangle $ refers to the subalgebra of $H$ generated by $H_\neg$. 

We also recall that varieties are exactly those classes of algebras which are closed under subalgebras $\mathbb{S}$, products $\mathbb{P}$ and homomorphic images $\mathbb{H}$. We write $\mathbb{V}(\cC)$ for the smallest variety containing a class of algebras $\cC$.

\subsection{Esakia Duality}\label{subsection:esakia duality}

We recall the Esakia duality between Heyting algebras and  Esakia spaces. We refer the reader to \cite{9783030120955} for more details on Esakia spaces and Esakia duality.

Given a topological space $(X,\tau)$ we write $\mathcal{C}(X)$ for its collection of clopen subsets, i.e. subsets $U\subseteq X$ which are both open and closed in the $\tau$-topology. For ease of read, in the remainder of the paper we will omit the reference to the topology $\tau$ and simply write $X$ to indicate a topological space. If such notation is needed for a given space $X$, we then  write $\tau_X$ for the collection of its open sets.

Recall that a topological space is \emph{totally disconnected} if its only connected components are singletons. A \emph{Stone space} is a compact, Hausdorff and totally disconnected space. Stone duality states that the category of Stone spaces with continuous maps is dually equivalent to the category of Boolean algebras with homomorphisms.

Esakia duality provides an analogue of this result for Heyting algebras. We define Esakia spaces as follows. 

\begin{definition}[Esakia Space]
	Let $\mathfrak{E}=(X,\leq)$ consist of a topological space $X$ and a partial order $\leq$ over $X$. We say that $\mathfrak{E}$ is an \textit{Esakia Space} if:
	\begin{itemize}
		\item[(i)]  $X$ is a compact space;
		\item [(ii)] For all $x,y\in \espace{E}$ such that $x\nleq y$, there is a clopen upset $U$ such that $x\in U$ and $y\notin U$;
		\item[(iii)] If $U$ is a clopen set, then also $U^\downarrow$ is clopen.
	\end{itemize}
\end{definition}

\noindent Condition (ii) in the definition above is called \textit{Priestley Separation Axiom}. Spaces satisfying conditions (i) and (ii) are called \textit{Priestley Spaces} \cite[\S 11]{pries}, hence every Esakia Space is also a Priestley space. Moreover, it can be also verified that every Esakia space is a Stone space. We write $\mathcal{CU}(\mathfrak{E})$ for the set of clopen upsets over $\mathfrak{E}$.

We write $\Esa$ to indicate the class of Esakia spaces. In analogy with $\HA$, we can see $\Esa$ as a category whose objects are Esakia spaces.	A morphism between Esakia spaces is a map that preserves the topological structure, the order-theoretical structure and the relation between the two. 

\begin{definition}[p-morphism]
	Given Esakia spaces $\espace{E} = (X, \leq)$ and $\espace{E'} = {(X',\leq)}$, a \emph{p-morphism} $f: \espace{E} \to \espace{E'}$ is a continuous map such that:
	\begin{enumerate}
		\item For all $x,y \in \espace{E}$, if $x \leq y$ then $f(x) \leq f(y)$;
		\item For all $x\in \espace{E}$ and $y' \in \espace{E'}$ such that $f(x) \leq y'$, there exists $y \in \espace{E}$ such that $x \leq y$ and $f(y) = y'$.
	\end{enumerate}
\end{definition}

\noindent	The continuity of the map ensures that the preimage of a clopen in $\clop(\espace{E'})$ is contained in $\clop(\espace{E})$.	Additionally, condition (i) ensures that the preimage of upsets (downsets) of $\espace{E'}$ are again upsets (downsets) of $\espace{E}$. We write $f:\fE\twoheadrightarrow \fE'$ when $f$ is a surjective p-morphism from $\fE$ to $\fE'$.

Esakia spaces allow us to provide a duality for Heyting algebras, in the same spirit of the Stone duality for Boolean algebras or the Priestley duality for bounded distributive lattices.   Since they will play a major role in the rest of the paper, we recall what are the underlying functors of this duality.

Given a Heyting algebra $H$, a proper subset $F\subsetneq H$ is a \textit{prime filter} if it is a filter and, whenever $x\lor y\in F$, then $x\in F$ or $y\in F$. Let $X_H=\PF(H)$ be the set of all prime filters over $H$, we can endow $X_H$ with a topology $\tau_H$, having as subbasis the following family of sets:
\[ \{ \phi(a) \,|\, a\in H \} \cup \{ \phi(a)^c \,| \, a\in H \} \]
where  $\phi(a) = \{ F\in X_H \,|\, a\in F \}$ and where $\phi(a)^c$ denotes the complement of $\phi(a)$ in $X_H$. Moreover, if we consider the standard inclusion order $\subseteq$ between prime filters, the ordered space $\espace{E}_H = (X_H, \tau_H, \subseteq)$ so obtained is an Esakia space: we call this the \emph{Esakia dual of $H$}. 

On the other hand, if $\fE$ is an Esakia Space we can define the Heyting algebra $H_{\espace{E}}$ over the set $\CU(\fE)$ of clopen upsets of $\fE$:
\begin{equation*}
\begin{array}{r@{\hspace{.3em}}c@{\hspace{.3em}}l  @{\hspace{1.5em}}  r@{\hspace{.3em}}c@{\hspace{.3em}}l  @{\hspace{1.5em}}  r@{\hspace{.3em}}c@{\hspace{.3em}}l}
U \land V &= & U \cap V
&U \lor V &= & U \cup V
&U \rightarrow V &= & ((U\setminus V)^\downarrow)^c
\end{array}
\end{equation*}	
\noindent
where $U^c$ denotes the complement of $U$ in $\espace{E}$.  We shall also write $\overline{U}$ for  $(U^{\downarrow})^c$, namely for the pseudocomplement of $U$ in $\CU(\fE)$. The algebra $H_{\espace{E}}$ is a Heyting algebra, which we call the \emph{Esakia dual of $\espace{E}$}. Esakia proved that these two maps are functorial and describe a dual equivalence between $\HA$ and $\Esa$, in particular the following holds with respect to objects.  

\begin{theorem}[Esakia]\label{theorem:esakiaDuality}
	For every Heyting algebra $H$, we have $H\cong H_{\mathfrak{E}_H}$. For every Esakia Space $\mathfrak{E}$, we have $\mathfrak{E}\cong \mathfrak{E}_{H_{\mathfrak{E}}}$.
\end{theorem}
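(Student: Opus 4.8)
The plan is to make both isomorphisms explicit and to verify their defining properties separately, using the representation map already introduced together with its order-theoretic dual. For the first claim the candidate is the map $\phi\colon H\to H_{\fE_H}=\CU(\fE_H)$ given by $a\mapsto\phi(a)=\{F\in\PF(H)\mid a\in F\}$. For the second claim the candidate is the \emph{evaluation map} $\varepsilon\colon \fE\to\fE_{H_\fE}$ sending a point $x$ to the collection $\varepsilon(x)=\{U\in\CU(\fE)\mid x\in U\}$ of clopen upsets containing it. In both cases the scheme is identical: show the map is well defined and preserves all the relevant structure by routine verifications, derive injectivity from a separation principle, and derive surjectivity from a compactness argument.

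For $H\cong H_{\fE_H}$, I would first observe that each $\phi(a)$ is a clopen upset, since $\phi(a)$ and $\phi(a)^c$ both lie in the defining sub-basis of $\tau_H$ and prime filters are $\subseteq$-upward closed. Preservation of $\land,\lor,0,1$ is immediate from the filter axioms and primeness. The only delicate clause is implication: one must check $\phi(a\to b)=((\phi(a)\setminus\phi(b))^\downarrow)^c$. The inclusion $\subseteq$ follows from $a\land(a\to b)\leq b$, while the reverse inclusion rests on the prime filter extension lemma — if $a\to b\notin F$ then the filter generated by $F\cup\{a\}$ omits $b$, so a prime filter $G\supseteq F$ with $a\in G$ and $b\notin G$ exists. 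Injectivity is the same separation: if $a\nleq b$ there is a prime filter containing $a$ but not $b$. Surjectivity is the crux and proceeds by a double compactness argument: given a clopen upset $U$, for each $F\in U$ and $G\notin U$ one has $F\not\subseteq G$, hence some $a_{FG}\in F\setminus G$; compactness of the closed set $U^c$ yields a finite meet $a_F$ with $F\in\phi(a_F)\subseteq U$, and compactness of $U$ then yields a finite join $a$ with $U=\phi(a)$.

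For $\fE\cong\fE_{H_\fE}$, I would check that $\varepsilon(x)$ is a proper prime filter of $\CU(\fE)$ directly from the definitions of the Heyting operations on clopen upsets. That $\varepsilon$ is an order embedding — $x\leq y$ iff $\varepsilon(x)\subseteq\varepsilon(y)$ — gives at once injectivity and order-reflection: the forward direction is trivial, and the converse is precisely the Priestley Separation Axiom (ii). Surjectivity again uses compactness: for a prime filter $F$ of $\CU(\fE)$, the family $\{U\mid U\in F\}\cup\{V^c\mid V\notin F\}$ has the finite intersection property — if $U_1\cap\dots\cap U_m\subseteq V_1\cup\dots\cup V_n$ with the $U_i\in F$ and the $V_j\notin F$, then $\bigvee_j V_j\in F$ and primeness forces some $V_j\in F$, a contradiction — so compactness produces a point $x$ with $\varepsilon(x)=F$. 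Finally $\varepsilon$ is continuous because the preimage of each sub-basic clopen $\{F\mid U\in F\}$ is exactly $U$; being a continuous bijection between compact Hausdorff spaces it is a homeomorphism, and together with the order isomorphism this makes it an isomorphism of Esakia spaces.

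I expect the main obstacles to be the two surjectivity statements, which are exactly the points where the compactness of the spaces (axiom (i)) and the prime filter existence theorem for distributive lattices do the essential work, and the preservation of implication by $\phi$, where the Heyting structure interacts with the downset-closure axiom (iii) through the formula $U\to V=((U\setminus V)^\downarrow)^c$. Everything else — the lattice and bound clauses, order preservation, and the upgrade of a continuous bijection to a homeomorphism — is routine once these are in place.
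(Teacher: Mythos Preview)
The paper does not prove this theorem: it is stated in the preliminaries section as a classical result due to Esakia, with the reader referred to the cited monograph on Esakia duality for details. Your sketch is the standard proof and is correct in outline; the two ``crux'' points you identify---the prime filter extension lemma for the implication clause and surjectivity of $\phi$, and the finite-intersection-property argument for surjectivity of $\varepsilon$---are exactly where the work lies, and your treatment of them is accurate.
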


\noindent 	At the level of arrows, we have the following correspondence:
\begin{itemize}
	\item Given a homomorphism $f: H \to H'$ between two Heyting algebras, we define the p-morphism $\hat{f}: \espace{E}_{H'} \to \espace{E}_{H}$ by $\hat{f}(x) = f^{-1}[x]$; 
	\item Given a p-morphism $g: \espace{E} \to \espace{E'}$ between two Esakia spaces, we define the homomorphism $\hat{g}: H_{\espace{E'}} \to H_{\espace{E}}$ by $\hat{g}(U) = g^{-1}[U]$.
\end{itemize}

\noindent These mappings, together with the ones presented above, provide a full duality between the categories $\HA$ and $\Esa$.  We indicate with $\CU: \HA \to \Esa$ and $\PF: \Esa \to \HA$ the corresponding functors. 

When restricted to the finite setting, Esakia duality delivers a dual equivalence between finite Heyting algebras and finite Esakia spaces. Since an Esakia space $\espace{E}$ is a Stone space, in the finite case its topology is discrete. This allows to study finite Esakia spaces only in terms of their order-theoretic structure and to treat them simply as finite partial orders.

\subsection{Semantics for Intermediate Logics}
\label{subsection:semanticsIntermediate}

Heyting algebras and Esakia Spaces are closely connected to intermediate logics, namely those logics which lie between intuitionistic and classical propositional logic. Let $\at$ be a set of atomic variables and consider the set of formulas $\langInt$ generated by the following grammar:
\begin{align*}
\phi ::= p \mid  \bot \mid \top \mid\phi\land\phi \mid \phi\lor \phi \mid  \phi \rightarrow \phi
\end{align*}

\noindent where $p\in\at$. We write $\ipc$ for intuitionistic logic and $\cpc$ for classical propositional logic. There is a standard way to interpret these formulas on Heyting algebras -- see e.g.  \cite[Sec. 7.3]{Zakharyaschev.1997}.  Given a Heyting algebra $H$ and a map $\mu: \at \to H$ (also called a \emph{valuation}), we can interpret formulas of $\langInt$ on $H$ inductively as follows:
\begin{equation*}
\begin{array}{r@{\hspace{.1em}}l @{\hspace{1em}}  r@{\hspace{.1em}}l @{\hspace{1em}}  r@{\hspace{.1em}}c@{\hspace{.1em}}l}	
\llbracket p \rrbracket^{H,\mu} &= \mu(p)
&\llbracket \bot \rrbracket^{H,\mu} &= 0 \\[0.5em]
\llbracket \top \rrbracket^{H,\mu} &=  1  
&\llbracket \phi \land \psi \rrbracket^{H,\mu} &= \llbracket \phi \rrbracket^{H,\mu} \land \llbracket \psi \rrbracket^{H,\mu} \\[0.5em]
\llbracket \phi \rightarrow \psi \rrbracket^{H,\mu} &= \llbracket \phi \rrbracket^{H,\mu} \to \llbracket \psi \rrbracket^{H,\mu}
&\llbracket \phi \lor \psi \rrbracket^{H,\mu} &= \llbracket\phi \rrbracket^{H,\mu} \vee  \llbracket\psi \rrbracket^{H,\mu}.
\end{array}
\end{equation*}
\noindent
Given a Heyting algebra $H$, we say that a formula $\phi$ is \emph{valid on $H$} (in symbols $H \vDash \phi$) if for every valuation $\mu$ we have $\sem{\phi}^{H,\mu} = 1$.
Given a class of Heyting algebras $\class$, we say that $\phi$ is \emph{valid on $\class$} (in symbols $\class \vDash \phi$) if $\phi$ is valid on every member of $\class$. We call the set of formulas valid on the class $\class$ the \emph{logic of $\class$} and we write $\Log(\class)$. It is well known that the logic of $\HA$ is  $\ipc$.

We say that a set of formulas $L$ in the signature $\langInt$  is an \emph{intermediate logic} if $\ipc\subseteq L\subseteq \cpc$  and, additionally, $L$ is closed under \emph{modus ponens} and uniform substitution. We shall write $L\vdash \phi$ when $\phi\in L$. A possibly surprising result is that not only $\ipc$, but \emph{every} intermediate logic is sound and complete with respect to a variety of Heyting algebras \cite{Zakharyaschev.1997}. 

This result can be combined with Theorem \ref{theorem:esakiaDuality} to obtain a semantics based on Esakia spaces. Let $\espace{E}$ be an Esakia space and consider a map $\mu: \at \to \clop\upset(\espace{E})$, which we call a \emph{topological valuation}. We can define an interpretation of formulas of $\langInt$ based on \emph{clopen upsets} of $\espace{E}$: 
\begin{equation*}
\begin{array}{r@{\hspace{.1em}}l @{\hspace{1em}}  r@{\hspace{.1em}}l @{\hspace{1em}}  r@{\hspace{.1em}}c@{\hspace{.1em}}l}	
\llbracket p \rrbracket^{\espace{E},\mu} &= \mu(p)
&\llbracket \bot \rrbracket^{\espace{E},\mu} &= \emptyset \\[0.5em]
\llbracket \top \rrbracket^{\espace{E},\mu} &=  \mathfrak{E} 
&\llbracket \phi \land \psi \rrbracket^{\espace{E},\mu} &= \llbracket \phi \rrbracket^{^{\espace{E},\mu}} \cap \llbracket \psi \rrbracket^{\espace{E},\mu}  \\[0.5em]
\llbracket \phi \rightarrow \psi \rrbracket^{\espace{E},\mu} &= \comp{\llbracket \phi \rrbracket^{\espace{E},\mu} \setminus \llbracket \psi \rrbracket^{\espace{E},\mu}}
&\llbracket \phi \lor \psi \rrbracket^{\espace{E},\mu} &= \llbracket\phi \rrbracket^{^{\espace{E},\mu}} \cup  \llbracket\psi \rrbracket^{^{\espace{E},\mu}}.
\end{array}
\end{equation*}

\noindent
Notice that these are exactly the Heyting algebra operations of the dual algebra $H_{\espace{E}}$. We say that a formula $\phi$ is valid on a space $\espace{E}$ (in symbols $\espace{E} \vDash \phi$) if for every valuation $\mu:\at \to \clop\upset(\espace{E})$ we have $\sem{\phi}^{\espace{E},\mu} = \espace{E}$. 	For a class $\mathcal{E}$ of Esakia spaces, we say that $\phi$ is valid on $\mathcal{E}$ (in symbols $\mathcal{E} \vDash \phi$) if $\phi$ is valid on every member of the class. We call the set of formulas valid on the class $\mathcal{E}$ the \emph{logic of $\mathcal{E}$} and we write  $\Log(\mathcal{E})$. We stress that, in the literature on intuitionistic logic, the term \emph{topological semantics} refers to a different semantics from the one presented here, one in which atomic formulas are assigned to \textit{opens} of an \textit{arbitrary} topological space (see \cite{bezhanishvili2019semantic}).

As a consequence of the results for classes of Heyting algebras, we have that the logic of the class $\Esa$ of all Esakia spaces is intuitionistic logic and that every intermediate logic is the logic of some class of Esakia spaces. Firstly, let us recall the correspondence between varieties of Heyting algebras and intermediate logics.	Let $L$ be an intermediate logic and $Var(L)=\{H\in \HA \mid H\vDash L \}$ the corresponding variety.	The algebraic completeness theorem for intermediate logics states that for any intermediate logic $L$, $L\vdash \phi $ if and only if $Var(L)\vDash \phi$.
Conversely, if $\mathcal{V}$ is a variety of Heyting algebras, then the definability theorem of varieties of Heyting algebra tells us that $H\in\vari$ if and only if $H\vDash Log(\mathcal{V})$, where $ Log(\mathcal{V}) = \{ \phi\in\langInt \,|\, \mathcal{V}\vDash\phi  \}$ is the logic of $\mathcal{V}$.

Using the Esakia duality and the semantics presented above, it follows that $H\vDash \phi$ if and only if $\mathfrak{E}_H\vDash \phi$. We can then translate the definability theorem and the algebraic completeness to the setting of Esakia spaces. To this end, we firstly define the concept corresponding to a variety of Heyting algebras: 	we say that a class of Esakia spaces $\mathcal{E}$ is a \textit{variety of Esakia spaces} if $\mathcal{E}$ is closed under p-morphic images, closed upsets and coproducts. These operations correspond through Esakia duality to the operations of subalgebras, homomorphic images and products respectively.\footnote{Finite coproducts of Esakia spaces are simply disjoint unions thereof, while infinite coproducts require additionally that one takes a suitable compactification of infinite disjoint unions, see e.g. \cite[Ex. 5.3.11]{gehrke2023topological}.}

Let $\Lambda(\HA)$ and $\Lambda(\ipc)$ be the complete lattice of varieties of Heyting algebras and the complete lattice of intermediate logics respectively (see \cite[Sec. 7.6]{Zakharyaschev.1997}). 	It is straightforward to show that the arbitrary intersection of varieties of Esakia spaces is again a variety, thus we have that the family of varieties of Esakia spaces $\Lambda(\Esa)$ forms a complete lattice too. In analogy with the algebraic case, we define the two functions $Space: \Lambda(\ipc) \rightarrow  \Lambda(\Esa)$ and $Log: \Lambda(\Esa) \rightarrow  \Lambda(\ipc)$ as follows:
\begin{align*}
Space(L)&= \{\mathfrak{E}\in \Esa \mid  \mathfrak{E}\vDash L  \};\\
Log(\mathfrak{\mathcal{E}})&=\{\phi\in\langInt \mid  \mathcal{E}\vDash \phi  \}.
\end{align*}

\noindent
By looking at these maps in light of the duality between Heyting algebras and Esakia spaces, we readily obtain the following result, which establishes a version of completeness and definability for varieties of Esakia spaces.

\begin{theorem}
	Let $L$ be an intermediate logic,  $\mathcal{E}$ a variety of Esakia Spaces, $\phi$ a formula and  $\mathfrak{E}$ an Esakia Space. Then we have the following:
	\begin{align*}
	\phi \in L &\Longleftrightarrow Space(L)\vDash \phi; \\
	\mathfrak{E}\in \mathcal{E} & \Longleftrightarrow \mathfrak{E}\vDash Log(\mathcal{E}).
	\end{align*}
\end{theorem}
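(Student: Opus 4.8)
The plan is to transport both biconditionals to their already-proved algebraic counterparts along Esakia duality. The two engines are the object-level semantic equivalence $H \vDash \psi \iff \mathfrak{E}_H \vDash \psi$ recorded just above, together with \cref{theorem:esakiaDuality}, which gives $\mathfrak{E} \cong \mathfrak{E}_{H_{\mathfrak{E}}}$ and $H \cong H_{\mathfrak{E}_H}$. Applying the object-level equivalence to every formula of a set $S$ yields $H \vDash S \iff \mathfrak{E}_H \vDash S$, and dually $\mathfrak{E} \vDash S \iff H_{\mathfrak{E}} \vDash S$; these are exactly what make the classes $Var(L)$ and $Space(L)$ correspond under the duality.

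For the first line, I would begin with algebraic completeness, $\phi \in L \iff Var(L) \vDash \phi$, and then prove $Var(L) \vDash \phi \iff Space(L) \vDash \phi$. For the forward direction, let $\mathfrak{E} \vDash L$; then $H_{\mathfrak{E}} \vDash L$, so $H_{\mathfrak{E}} \in Var(L)$ and hence $H_{\mathfrak{E}} \vDash \phi$, whence $\mathfrak{E} \cong \mathfrak{E}_{H_{\mathfrak{E}}} \vDash \phi$. The converse is symmetric: from $H \in Var(L)$ one passes to $\mathfrak{E}_H \in Space(L)$ and back along the object-level equivalence. Chaining the two equivalences closes the first biconditional.

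For the second line, the forward direction $\mathfrak{E} \in \mathcal{E} \Rightarrow \mathfrak{E} \vDash Log(\mathcal{E})$ is immediate from the definition of $Log(\mathcal{E})$. For the converse I would pass to the dual algebraic class $\mathcal{V} := \{H \in \HA \mid \mathfrak{E}_H \in \mathcal{E}\}$. Since $\mathcal{E}$ is closed under isomorphism, \cref{theorem:esakiaDuality} gives $\mathfrak{E} \in \mathcal{E} \iff H_{\mathfrak{E}} \in \mathcal{V}$ for every space $\mathfrak{E}$, and consequently $\mathcal{V}$ and $\mathcal{E}$ validate the same formulas, so $Log(\mathcal{V}) = Log(\mathcal{E})$. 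Assuming $\mathfrak{E} \vDash Log(\mathcal{E}) = Log(\mathcal{V})$, the object-level equivalence gives $H_{\mathfrak{E}} \vDash Log(\mathcal{V})$; the definability theorem for varieties of Heyting algebras then yields $H_{\mathfrak{E}} \in \mathcal{V}$, and therefore $\mathfrak{E} \cong \mathfrak{E}_{H_{\mathfrak{E}}} \in \mathcal{E}$.

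The routine work is the pair of symmetric transports in the first biconditional and the bookkeeping $Log(\mathcal{V}) = Log(\mathcal{E})$. The step needing the most care — and the only one with real content — is checking that $\mathcal{V}$ is genuinely a variety, so that the definability theorem applies. For this I would use the contravariant correspondence of operations: a subalgebra inclusion dualises to a surjective p-morphism, a product to a disjoint union, and a homomorphic image to a closed upset. Hence closure of $\mathcal{E}$ under p-morphic images, disjoint unions and closed upsets transfers to closure of $\mathcal{V}$ under $\mathbb{S}$, $\mathbb{P}$ and $\mathbb{H}$; by Birkhoff's theorem this already makes $\mathcal{V}$ a variety, with closure under $\mathbb{P}_U$ automatic. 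The only delicate point here is the behaviour of infinite products and their dual disjoint unions in the topological setting, which is exactly the content invoked from \cite[Lem. 3.4.11]{9783030120955}.
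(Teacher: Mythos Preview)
Your proposal is correct and follows exactly the route the paper indicates: the paper does not spell out a proof but simply says the result is ``readily obtained'' by transporting algebraic completeness and the definability theorem along the object-level equivalence $H\vDash\phi\iff\mathfrak{E}_H\vDash\phi$ and \cref{theorem:esakiaDuality}. Your write-up is a faithful unpacking of that transfer, including the one substantive step---verifying that the dual class $\mathcal{V}$ is a variety via the correspondence of $\mathbb{S},\mathbb{P},\mathbb{H}$ with p-morphic images, disjoint unions and closed upsets---which the paper delegates to \cite[Lem.~3.4.11]{9783030120955}.
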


Finally, Esakia duality can be lifted to the level of the lattices of varieties of Heyting algebras and of Esakia spaces. In particular, the maps
\begin{equation*}
\begin{array}{l@{\hspace{3em}}l}
\overline{\PF}: \Lambda(\HA) \to \Lambda(\Esa)
&\overline{\clop\upset}: \Lambda(\Esa) \to \Lambda(\HA) \\
\overline{\PF}(\vari) =  \{ \fE \mid \fE\cong\espace{E}_H \text{ for }  H\in \vari \}
&\overline{\clop\upset}(\mathcal{E}) = \{ H\mid H \cong H_{\espace{E}} \text{ for }  \espace{E} \in \mathcal{E} \}
\end{array}
\end{equation*}

\begin{wrapfigure}{r}{6.3cm}		
	\adjustbox{scale=0.8,}{
		\begin{tikzcd}
			\Lambda(\HA) \arrow[rrddd, "Log"', bend right] \arrow[rrrr, "\overline{\PF}", bend left] &  &                                                                 &  & \Lambda(\Esa) \arrow[llddd, "Log", bend left] \arrow[llll, "\overline{\clop\upset}"] \\
			&  &                                                                 &  &                                                                            \\
			&  &                                                                 &  &                                                                            \\
			&  & 	\Lambda(\ipc)^{op} \arrow[lluuu, "Var"'] \arrow[rruuu, "Space"] &  &                                                                           
	\end{tikzcd}}
\end{wrapfigure}

\noindent are inverse to each other.  The names $\overline{\PF}$ and $\overline{\clop\upset}$ indicate that these maps can be seen as liftings of the maps $\PF$ and $\clop\upset$ respectively to varieties. The lattices $\Lambda(\HA)$ and $\Lambda(\Esa)$ are then isomorphic, whence we also obtain that $\Lambda(\ipc)\cong^{op}\Lambda(\HA)\cong\Lambda(\Esa)$. The relations between the lattices $\Lambda(\HA)$, $\Lambda(\Esa)$  and $\Lambda(\ipc)$ are depicted in the diagram to the right, where arrows indicate lattice isomorphisms.

\subsection{$\dna$-Logics}
\label{subsection:dnaLogicsAndAlgebraicSemantics}

In this paper, we are especially interested in regular Heyting algebras and their connection to Esakia spaces. This class of structures has important connections to a family of (non-standard) logics closely related to intermediate logics, i.e. \emph{$\dna$-logics}, from \textit{double negation on atoms}. These logics were originally introduced in \cite{Miglioli1989-PIESRO} and later studied in \cite{Ciardelli.2009,bezhanishvili_grilletti_quadrellaro_2022}. Notice that, for any formula $\phi$, we write $\phi[\sfrac{\overline{\neg p}}{\overline{p}}]$ for the formula obtained by replacing simultaneously $p$ by $\neg p$ for any atom $p$ occurring in $\phi$.

\begin{definition}
	For every intermediate logic $L$,  its \textit{negative variant} $L^\neg$ is
	$$ L^\neg \;=\; \{\,\phi\in\langInt \,|\, \phi[\sfrac{\overline{\neg p}}{\overline{p}}]\in L  \,\}.$$
	We call the negative variant of some intermediate logic a \emph{$\dna$-logic}.
\end{definition}

\noindent	Every $\dna$-logic contains the formula $\neg\neg p \to p$ for every atomic proposition $p\in\at$---but in general this is not true if we replace $p$  by an arbitrary formula $\phi$.
So we can think of $\dna$-logics as intermediate logics where atoms do not play the role of \emph{arbitrary formulas}, since the principle of \emph{uniform substitution} does not hold, but rather the role of \emph{arbitrary negated formulas}. We notice that $\dna$-logics are an example of \textit{weak logics} in the sense of \cite[Def. 2]{nakov.quadrellaro}, i.e. they are consequence relations closed under permutations of atomic variables. 

Given a $\dna$-logic $\dnal{L}$ there is a standard way to find an intermediate logic $L$ such that $L^{\neg} = \dnal{L}$, as the following lemma shows.	
\begin{lemma}
	Given a $\dna$-logic $\dnal{L}$, define the set
	\begin{equation*}
	S(\dnal{L}) \;:=\; \{\, \phi \,|\, \sigma (\phi) \in \dnal{L} \text{ for every substitution $\sigma$} \,\}.
	\end{equation*}
	Then $S(\dnal{L})$ is an intermediate logic and $(S(\dnal{L}))^{\neg} = \dnal{L}$.
\end{lemma}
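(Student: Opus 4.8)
The plan is to verify the two claims separately, using throughout the abbreviation $N(\phi) := \phi[\sfrac{\overline{\neg p}}{\overline{p}}]$ for the negative substitution, so that $\dnal{L} = L^\neg = \{\phi \mid N(\phi) \in L\}$ and $S(\dnal{L}) = \{\phi \mid \sigma(\phi) \in \dnal{L} \text{ for every substitution } \sigma\}$. The organising observation is that $N$ is itself a substitution, hence it commutes with the connectives and may be freely composed with other substitutions. I would first isolate three auxiliary facts about the $\dna$-logic $\dnal{L}$, each obtained by pushing $N$ through the corresponding closure property of $L$: (a) $\ipc \subseteq \dnal{L}$, since $\chi \in \ipc$ gives $N(\chi) \in \ipc \subseteq L$; (b) $\dnal{L} \subseteq \cpc$, since $N(\chi) \in L \subseteq \cpc$ yields $N(N(\chi)) \in \cpc$ by uniform substitution, and $\cpc \vdash \neg\neg p \leftrightarrow p$ together with replacement of equivalents gives $\chi \leftrightarrow N(N(\chi))$, whence $\chi \in \cpc$; and (c) $\dnal{L}$ is closed under modus ponens, because $N(\phi), N(\phi) \to N(\psi) \in L$ force $N(\psi) \in L$.

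For the first claim I would check the four defining properties of an intermediate logic. Uniform substitution is immediate: if $\phi \in S(\dnal{L})$ and $\tau$ is a substitution, then for every $\sigma$ we have $\sigma(\tau(\phi)) = (\sigma \circ \tau)(\phi) \in \dnal{L}$, so $\tau(\phi) \in S(\dnal{L})$. Closure under modus ponens follows from fact (c): if $\phi, \phi \to \psi \in S(\dnal{L})$, then for every $\sigma$ both $\sigma(\phi)$ and $\sigma(\phi) \to \sigma(\psi)$ lie in $\dnal{L}$, hence so does $\sigma(\psi)$. The inclusion $\ipc \subseteq S(\dnal{L})$ uses that $\ipc$ is substitution-closed together with fact (a), since any substitution instance of an $\ipc$-theorem is again in $\ipc \subseteq \dnal{L}$. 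Finally $S(\dnal{L}) \subseteq \cpc$ follows by instantiating $\sigma = \id$ to get $S(\dnal{L}) \subseteq \dnal{L}$ and then applying fact (b).

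For the identity $(S(\dnal{L}))^\neg = \dnal{L}$ I would unfold the left-hand side to $\{\phi \mid \sigma(N(\phi)) \in \dnal{L} \text{ for all } \sigma\}$. The inclusion $\dnal{L} \subseteq (S(\dnal{L}))^\neg$ is direct: if $N(\phi) \in L$, then for any $\sigma$ the formula $N(\sigma(N(\phi))) = (N \circ \sigma)(N(\phi))$ is a substitution instance of $N(\phi)$, hence lies in $L$, so $\sigma(N(\phi)) \in \dnal{L}$. The reverse inclusion is the crux. Given $\phi \in (S(\dnal{L}))^\neg$, instantiating $\sigma = \id$ yields $N(\phi) \in \dnal{L}$, i.e. $N(N(\phi)) \in L$; applying uniform substitution by $N$ gives $N(N(N(\phi))) \in L$. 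The decisive point is the triple-negation law $\ipc \vdash \neg\neg\neg p \leftrightarrow \neg p$: by replacement of equivalents it upgrades to $\ipc \vdash N(N(N(\phi))) \leftrightarrow N(\phi)$, so modus ponens in $L$ delivers $N(\phi) \in L$, that is $\phi \in \dnal{L}$.

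The main obstacle is exactly this last step. One cannot pass directly from $N(N(\phi)) \in L$ to $N(\phi) \in L$, because $\neg\neg p$ and $\neg p$ are not intuitionistically equivalent; the additional application of $N$, which lands on the intuitionistically collapsing triple negation, is precisely what makes the argument close. Everything else reduces to the routine bookkeeping of commuting the substitution $N$ past the closure conditions on $L$.
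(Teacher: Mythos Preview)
Your proof is correct and self-contained. The paper itself does not prove this lemma but defers to \cite[Thm.~4.6]{bezhanishvili_grilletti_quadrellaro_2022}, so a direct comparison of approaches is not possible from the text; your argument---isolating the closure properties of $\dnal{L}$, then handling the nontrivial inclusion $(S(\dnal{L}))^\neg \subseteq \dnal{L}$ via the intuitionistic triple-negation law---is the standard route and matches what one finds in that reference.
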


\noindent We refer the reader to \cite[Thm. 4.6]{bezhanishvili_grilletti_quadrellaro_2022} for the proof of the previous lemma. 	$S(\dnal{L})$ is usually referred to as the \emph{schematic fragment} of the logic $\dnal{L}$---see for example \cite{Ciardelli.2009}.

There is also another way to characterize $\dna$-logics, that is, through their \emph{algebraic semantics} based on Heyting algebras.  	Let $H$ be a Heyting algebra, then  we call a valuation $\mu: \at \to H$ \emph{negative} if every atom is mapped to a regular element of $H$, or equivalently if $\neg\neg \mu(p) = \mu(p)$ for every $p$. If we restrict the algebraic semantics presented in Subsection \ref{subsection:semanticsIntermediate} to negative valuations we obtain a correct semantics for $\dna$-logics, in the following sense:
If $H$ is a Heyting algebra, the set of formulas $\phi$ such that $\sem{\phi}^{H,\mu} = 1$ for every \emph{negative valuation} $\mu$ is a $\dna$-logic---we call this set the \emph{$\dna$-logic of $H$}. We write $ H\vDash^{\neg} \phi $ if $\sem{\phi}^{H,\mu} = 1$ for every \emph{negative valuation} $\mu$, and we extend this notion to classes of algebras in the usual way.

In \cite{bezhanishvili_grilletti_quadrellaro_2022} this semantics was employed in order to adapt results from the field of intermediate logic to study $\dna$-logics. In particular, we can show that $\dna$-logics form a lattice $\Lambda(\ipc^\neg)$, dual to a particular sublattice of $\Lambda(\HA)$. We write $ K \preceq H$ whenever $K$ is a subalgebra of $H$. 

\begin{definition}[$\dna$-variety]\label{dna-variety}
	A variety of Heyting algebras $\vari$ is called a \emph{$\dna$-variety} if it is additionally closed under the operation:
	\begin{equation*}
	\vari^{\uparrow} \;=\; \{\, H \mid \exists K\in \vari.\; K_{\neg} = H_{\neg} \text{ and } K \preceq H \,\}.
	\end{equation*}
\end{definition}

\noindent We write $\dvari(\cC)$ for the smallest $\dna$-variety of algebras containing $\cC$. We let $\Lambda(\HA^{\uparrow})$ be the sublattice of $\Lambda(\HA)$ comprised of all and only the $\dna$-varieties. We remark here that, since $\dna$-varieties are uniquely determined by their regular elements, there is a one-to-one correspondence between $\dna$-varieties and varieties generated by regular Heyting algebras.	

It can be shown \cite[\S 3.4]{bezhanishvili_grilletti_quadrellaro_2022} that for any $\dna$-logic $\mathtt{L}$ the set  $Var^{\neg}(\mathtt{L}) := \{ H \mid  \forall \phi \in \dnal(L).\; H\vDash^{\neg} \phi \}$ is a $\dna$-variety and that given $\vari$ a $\dna$-variety, the set  $Log^{\neg}(\vari) := \{ \phi \mid  \vari\vDash^{\neg} \phi \}$ is a $\dna$-logic. With these preliminary results in place, we can state the correspondence between $\dna$-logics and $\dna$-varieties, analogous to the one for intermediate logics and varieties. See \cite[Thm 3.35]{bezhanishvili_grilletti_quadrellaro_2022} for the proof of the following theorem. 
\begin{theorem}
	The lattices $\Lambda(\ipc^{\neg})$ and $\Lambda(\HA^{\uparrow})$ are dually isomorphic.
	In particular, the maps $Var^{\neg}$ and $Log^{\neg}$ are inverse to each other.
\end{theorem}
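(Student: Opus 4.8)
The plan is to exhibit $Var^{\neg}$ and $Log^{\neg}$ as an antitone Galois connection between the complete lattices $\Lambda(\ipc^{\neg})$ and $\Lambda(\HA^{\uparrow})$ whose two composites are both the identity; this automatically yields a dual lattice isomorphism. That the maps are well defined—$Var^{\neg}(\mathtt{L})$ is always a $\dna$-variety and $Log^{\neg}(\vari)$ always a $\dna$-logic—is recalled above, so their images land in the correct lattices. Both maps are order-reversing straight from their definitions: enlarging a set of formulas can only shrink the class of algebras negatively validating all of them, and symmetrically. The inclusions $\mathtt{L}\subseteq Log^{\neg}(Var^{\neg}(\mathtt{L}))$ and $\vari\subseteq Var^{\neg}(Log^{\neg}(\vari))$ are immediate soundness observations. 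Thus the entire content reduces to the two reverse inclusions: a completeness statement $Log^{\neg}(Var^{\neg}(\mathtt{L}))\subseteq \mathtt{L}$ and a definability statement $Var^{\neg}(Log^{\neg}(\vari))\subseteq \vari$.

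For completeness I would argue by contraposition through the schematic fragment. Given $\phi\notin\mathtt{L}$, the lemma above supplies an intermediate logic $S(\mathtt{L})$ with $(S(\mathtt{L}))^{\neg}=\mathtt{L}$, so $\phi\notin\mathtt{L}$ is equivalent to $\phi[\sfrac{\overline{\neg p}}{\overline{p}}]\notin S(\mathtt{L})$. By algebraic completeness of intermediate logics there are $H\in Var(S(\mathtt{L}))$ and a valuation $\nu$ with $\sem{\phi[\sfrac{\overline{\neg p}}{\overline{p}}]}^{H,\nu}\neq 1$. Setting $\mu(p):=\neg\nu(p)$ produces a negative valuation, since $\neg x$ is always regular, and a trivial induction gives $\sem{\phi}^{H,\mu}=\sem{\phi[\sfrac{\overline{\neg p}}{\overline{p}}]}^{H,\nu}\neq 1$. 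Running the same correspondence in reverse shows $Var(S(\mathtt{L}))\subseteq Var^{\neg}(\mathtt{L})$, so this $H$ witnesses $\phi\notin Log^{\neg}(Var^{\neg}(\mathtt{L}))$, as wanted.

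For definability, fix a $\dna$-variety $\vari$ and an algebra $H$ with $H\vDash^{\neg}Log^{\neg}(\vari)$; I must deduce $H\in\vari$. The key reduction is that negative validity depends only on the subalgebra $\langle H_{\neg}\rangle$ generated by the regular elements: a negative valuation sends atoms into $H_{\neg}$, so every interpretant already lives in $\langle H_{\neg}\rangle$, whence $H\vDash^{\neg}\phi$ if and only if $\langle H_{\neg}\rangle\vDash^{\neg}\phi$. Since $\langle H_{\neg}\rangle_{\neg}=H_{\neg}$ and $\langle H_{\neg}\rangle\preceq H$, the closure of $\vari$ under the operation $\vari^{\uparrow}$ means that $H\in\vari$ as soon as $\langle H_{\neg}\rangle\in\vari$. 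This reduces the problem to the case of a regular algebra.

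The regular case is the main obstacle. Here one needs the $\dna$-analogue of the definability theorem for varieties: a regular algebra negatively validating $Log^{\neg}(\vari)$ must belong to $\vari$. I would prove this by a Birkhoff-style argument adapted to the extra closure operation, realising the regular algebra as a quotient of a Lindenbaum-type algebra built from $Log^{\neg}(\vari)$ and checking that each of the class operations $\mathbb{H},\mathbb{S},\mathbb{P},\mathbb{P}_U$ together with $\uparrow$ preserves negative validity; verifying the interaction of $\uparrow$ with the usual HSP-apparatus is the delicate point. With both fixpoint identities established, the order-reversing bijections $Var^{\neg}$ and $Log^{\neg}$ are mutually inverse and interchange meets and joins, delivering the claimed dual isomorphism of lattices.
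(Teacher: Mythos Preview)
The paper does not prove this theorem; it simply cites \cite[Thm.~3.35]{bezhanishvili_grilletti_quadrellaro_2022}. So there is no in-paper argument to compare against, and your proposal must be judged on its own.

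Your overall architecture---antitone Galois connection plus the two nontrivial inclusions---is correct, and your completeness argument via the schematic fragment $S(\mathtt{L})$ is clean and complete. The definability direction, however, is left as a sketch: you correctly reduce to a regular algebra $H$ with $H\vDash^{\neg}Log^{\neg}(\vari)$ and propose a Lindenbaum-style construction, but you do not carry it out, and the phrase ``verifying the interaction of $\uparrow$ with the usual HSP-apparatus is the delicate point'' flags a gap rather than closing it. The Lindenbaum route does work (the Lindenbaum algebra of $Log^{\neg}(\vari)$ is regular, lies in $\vari$ because $Log(\vari)\subseteq Log^{\neg}(\vari)$ and $Log(\vari)$ is substitution-closed, and surjects onto any regular $H$ validating $Log^{\neg}(\vari)$), but you should spell this out and address the cardinality of the generating set.

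A shorter finish, parallel to your completeness argument, is available: for a \emph{regular} $H$, any valuation $\nu$ on $H$ factors as $\mu\circ\sigma$ with $\mu$ negative and $\sigma$ a substitution (write each $\nu(p)$ as a polynomial in finitely many regular elements and introduce fresh variables for those). Hence $H\vDash^{\neg}Log^{\neg}(\vari)$ implies $H\vDash S(Log^{\neg}(\vari))$. Since $Log(\vari)\subseteq S(Log^{\neg}(\vari))$, one gets $H\in Var(S(Log^{\neg}(\vari)))\subseteq Var(Log(\vari))=\vari$ by ordinary definability for varieties of Heyting algebras. This avoids the Lindenbaum machinery entirely and reuses exactly the tool you already deployed for completeness.
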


The (propositional) inquisitive logic $\inqB$  \cite{Ciardelli2022-lp, ciardelli2018inquisitive}  is usually introduced, analogously to dependence logic, in terms of team semantics (see \cref{dependence.logic}). However, it can also be viewed as a $\dna$-logic. We start by recalling the definitions of the following intermediate logics:
\begin{equation*}
\begin{array}{l @{} l}
\text{\texttt{KP} }	& = \space  \ipc \space + \space (\neg p \rightarrow q\lor r) \rightarrow (\neg p \rightarrow q) \lor (\neg p \rightarrow r) \\
\text{\texttt{ND} }	& =\space  \ipc \space + \space \{ (\neg p \rightarrow \bigvee_{i\leq k} \neg q_i )\rightarrow \bigvee_{i\leq k}(\neg p \rightarrow \neg q_i) \mid k\geq 2 \}.
\end{array}
\end{equation*}

\noindent Moreover, we define the intermediate logic $\mathtt{ML}$ as the set of all formulas which are valid in posets of the form $(\wp(n){\setminus}\emptyset, \supseteq)$ for $0<n<\omega$, under the usual Kripke semantics. It is a well-known fact that $\mathtt{ND}\subseteq \mathtt{KP}\subseteq \mathtt{ML}$ (see e.g.  \cite{Zakharyaschev.1997}). The following theorem establishes an important connection between these intermediate logics and inquisitive logic.

\begin{theorem}[Ciardelli \cite{Ciardelli.2009}]		
	Inquisitive logic is the negative variant of any intermediate logic $L$ such that $\mathtt{ND}\subseteq L\subseteq \mathtt{ML}$.
\end{theorem}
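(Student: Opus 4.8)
The plan is to exploit the monotonicity of the negative-variant operation together with a semantic analysis of $\mathtt{ML}$, reducing the whole statement to the collapse of the two extreme negative variants onto $\inqB$. First I would observe that $(-)^{\neg}$ is monotone: if $\phi\in L^{\neg}$ then $\phi[\sfrac{\overline{\neg p}}{\overline{p}}]\in L\subseteq L'$, so $\phi\in (L')^{\neg}$ whenever $L\subseteq L'$. Since $\mathtt{ND}\subseteq L\subseteq\mathtt{ML}$, this gives $\mathtt{ND}^{\neg}\subseteq L^{\neg}\subseteq\mathtt{ML}^{\neg}$, so it suffices to prove the two inclusions $\mathtt{ML}^{\neg}\subseteq\inqB$ and $\inqB\subseteq\mathtt{ND}^{\neg}$: chaining them with the monotonicity inclusion $\mathtt{ND}^{\neg}\subseteq\mathtt{ML}^{\neg}$ forces $\mathtt{ML}^{\neg}=\inqB=\mathtt{ND}^{\neg}$, and then every intermediate $L^{\neg}$ is squeezed to $\inqB$ as well.

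The inclusion $\mathtt{ML}^{\neg}\subseteq\inqB$ I would obtain semantically. The crucial observation is the substitution identity $\sem{\phi[\sfrac{\overline{\neg p}}{\overline{p}}]}^{H,\mu}=\sem{\phi}^{H,\mu^{\neg}}$, where $\mu^{\neg}(p):=\neg\mu(p)$; since $\neg x$ is always regular and every regular element $y$ equals $\neg(\neg y)$, the assignment $\mu\mapsto\mu^{\neg}$ ranges exactly over the negative valuations as $\mu$ ranges over all valuations. Consequently $\phi[\sfrac{\overline{\neg p}}{\overline{p}}]$ is valid on $H$ if and only if $\phi$ is $\dna$-valid on $H$, and therefore $\mathtt{ML}^{\neg}$ coincides with the set of formulas $\dna$-valid on all Medvedev frames $(\wp(n)\setminus\emptyset,\supseteq)$ with $0<n<\omega$. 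It then remains to match this with $\inqB$: the team semantics of $\inqB$ over a finite set of worlds $W$ is equivalent to the negative (persistent, classical-on-atoms) Kripke semantics over $(\wp(W)\setminus\emptyset,\supseteq)$, so that $\inqB$ is precisely the $\dna$-logic of the Medvedev frames. This is the bridge between the team-based presentation of $\inqB$ and its $\dna$-reading, which I would invoke from the inquisitive logic literature.

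For $\inqB\subseteq\mathtt{ND}^{\neg}$ I would use the resolution normal form of inquisitive logic: every $\phi$ is $\inqB$-equivalent to a disjunction $\bigvee_i\alpha_i$ of \emph{flat} (negative) formulas, its resolutions. The key points are (i) that this normal-form equivalence is already derivable from the $\mathtt{ND}$-principles once translated by $\sfrac{\overline{\neg p}}{\overline{p}}$, i.e. it holds in $\mathtt{ND}^{\neg}$; and (ii) that a flat formula belongs to $\inqB$ exactly when it is classically valid, a fact captured by $\mathtt{ND}^{\neg}$ because the disjunction property forces some resolution of a valid $\phi$ to be classically valid. Assembling these, a theorem of $\inqB$ is rewritten in $\mathtt{ND}^{\neg}$ into a disjunction one of whose disjuncts is $\mathtt{ND}^{\neg}$-provable, giving $\phi\in\mathtt{ND}^{\neg}$.

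The main obstacle is exactly steps (i)--(ii) above: showing that the \emph{weakest} logic $\mathtt{ND}$ already suffices, under the negative translation, both to establish the resolution normal form and to certify the classically valid resolutions, so that none of the extra (non-finitely-axiomatisable) strength of $\mathtt{ML}$ survives on negatively translated formulas. Concretely, the delicate verification is that the generalised Kreisel--Putnam axioms defining $\mathtt{ND}$ are precisely what is needed to commute disjunctions past negated antecedents in the normal-form computation; everything else reduces to intuitionistic reasoning and to the classical behaviour of flat formulas.
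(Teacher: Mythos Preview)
The paper does not give its own proof of this theorem: it is stated with attribution to Ciardelli's 2009 thesis and left unproved, serving as background for the algebraic and topological developments that follow. So there is no in-paper argument to compare against.

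Your outline is essentially the original proof from Ciardelli's thesis. The monotonicity reduction to the two endpoints is exactly how the argument is organised there; the identification of $\mathtt{ML}^{\neg}$ with $\inqB$ via the observation that team semantics over a finite world set $W$ is the $\dna$-semantics over the Medvedev frame $(\wp(W)\setminus\{\emptyset\},\supseteq)$ is the standard bridge; and the hard direction $\inqB\subseteq\mathtt{ND}^{\neg}$ proceeds precisely through the resolution normal form plus the disjunction property, with the generalised Kreisel--Putnam schemata of $\mathtt{ND}$ being exactly what is needed to push disjunctions outward past negated antecedents under the negative translation. You have also correctly located the only genuinely delicate point, namely verifying that $\mathtt{ND}$ (rather than something stronger) already suffices for the normal-form computation on negatively translated formulas. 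So your plan is correct and coincides with the source the paper cites.
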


In the light of the previous theorem, the algebraic approach that we introduced  to study $\dna$-logics can be employed to study inquisitive logic as well, as it was done in \cite{bezhanishvili_grilletti_quadrellaro_2022}. In fact, such algebraic approach  	extends  the original work from \cite{grilletti} on the algebraic semantics of inquisitive logic. It was later shown in \cite{nakov.quadrellaro} that this algebraic semantics for $\inqB$ is unique, making $\inqB$ (as well as every $\dna$-logic) algebraizable in a suitable sense.

\section{The Stone Space of Maximal Elements}
\label{subsection:stoneSpaceMaximalElements}
We start by introducing and recalling some basic properties of regular clopens of Esakia spaces. These properties belong to the folklore, but we shall provide details of the proofs in these sections as it does not seem to us that they are explicitly presented in the past literature. We stress however that \cref{hmap} is already stated in \cite[A.2.1]{9783030120955} and \cite[\S 3]{10.2307/20016257}.

First, notice that given an Esakia space $\espace{E}$ we can consider two topologies on it:	the equipped Esakia topology $\tau_{\espace{E}}$ and the Alexandrov topology $\tau_\leq$ induced by the partial order on $\espace{E}$, i.e., the topology having upsets as open sets. To distinguish the interior and closure operators in the two topologies we use the notations $\Int$, $\Cl$ and $\Int_{\leq}, \Cl_{\leq}$ respectively. As the next definition makes explicit, in the rest of this article when we speak of regular subsets of an Esakia space we always mean \emph{regular sets under the order topology}.	
\begin{definition}
	An upset $U$ of an Esakia space $\espace{E}$ is \textit{regular} if ${\Int_{\leq}(\Cl_{\leq}(U)) = U}$.
\end{definition}	
\noindent	We denote by $\mathcal{UR}(\espace{E})$ the regular upsets of $\espace{E}$,  and we denote by $\mathcal{RCU}(\espace{E})$ the set of upsets of $\espace{E}$ that are (i) regular according to the Alexandrov topology and (ii) clopen according to the equipped Esakia topology.   We start by providing several equivalent characterisations of such subsets. Recall that we let $\overline{U}=((U)^\downarrow)^c$ and that if $U\in\mathcal{CU}(\fE)$ then $\overline{U}$ is its pseudocomplement in the Heyting algebra $\mathcal{CU}(\fE)$.

\begin{proposition}\label{regulars}
	Let $\fE$ be an Esakia space and let $U\in\mathcal{CU}(\fE)$.
	Then the following are equivalent:
	\begin{enumerate}
		\item $  U $ is regular;				
		\item $U=\overline{\overline{U}}$;				
		\item $U^\downarrow\setminus U   \;\subseteq\;   \overline{U}^\downarrow\setminus \overline{U}$.	
	\end{enumerate}
\end{proposition}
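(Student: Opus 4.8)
The plan is to reduce everything to order-theoretic bookkeeping in the Alexandrov topology $\tau_\leq$, where the closed sets are exactly the downsets and the open sets the upsets. Thus for any set $A$ one has $\Cl_\leq(A) = A^\downarrow$ and $\Int_\leq(A) = \{x \mid x^\uparrow \subseteq A\}$. The single computation driving the whole proposition is the identity
\begin{equation*}
\overline{\overline{U}} \;=\; \Int_\leq(\Cl_\leq(U)),
\end{equation*}
which I would establish by unfolding definitions: since $\overline{U} = (U^\downarrow)^c$, we get $\overline{\overline{U}} = ((\overline{U})^\downarrow)^c = \{x \mid x^\uparrow \subseteq U^\downarrow\}$, and the right-hand side is precisely $\Int_\leq(U^\downarrow) = \Int_\leq(\Cl_\leq(U))$. (Here Esakia's axiom (iii) guarantees that $U^\downarrow$, hence $\overline{U}$, is clopen, so $\overline{U} \in \mathcal{CU}(\fE)$ and the operation may legitimately be iterated.) With this identity in hand, conditions (i) and (ii) are literally the same equation, $\Int_\leq(\Cl_\leq(U)) = U$ versus $\overline{\overline{U}} = U$, so the equivalence (i) $\Leftrightarrow$ (ii) is immediate.

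For (ii) $\Leftrightarrow$ (iii) I would first record two facts valid for every clopen upset, which are just the concrete forms of $a \leq \neg\neg a$ and $\neg a \wedge \neg\neg a = 0$: namely $U \subseteq \overline{\overline{U}}$ (because $U$ is an upset, $x \in U$ forces $x^\uparrow \subseteq U \subseteq U^\downarrow$) and $\overline{\overline{U}} \cap \overline{U} = \emptyset$, equivalently $\overline{\overline{U}} \subseteq (\overline{U})^c = U^\downarrow$. Next I would unpack (iii) by complementation. Using $(\overline{U})^c = U^\downarrow$ and $\overline{U}^\downarrow = (\overline{\overline{U}})^c$, one rewrites $U^\downarrow \setminus U = U^\downarrow \cap U^c$ and $\overline{U}^\downarrow \setminus \overline{U} = (\overline{\overline{U}})^c \cap U^\downarrow$, so that (iii) becomes $U^\downarrow \cap U^c \subseteq (\overline{\overline{U}})^c$, i.e.\ after taking complements
\begin{equation*}
\overline{\overline{U}} \;\subseteq\; U \cup \overline{U}.
\end{equation*}

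To close the loop I would intersect this last inclusion with $\overline{\overline{U}}$: since $\overline{\overline{U}}$ is disjoint from $\overline{U}$ and contains $U$, the intersection of $U \cup \overline{U}$ with $\overline{\overline{U}}$ equals $U$, so the inclusion forces $\overline{\overline{U}} = U$, which is (ii); the converse implication (ii) $\Rightarrow$ (iii) is trivial. Hence (i), (ii) and (iii) are equivalent. I do not expect any genuine obstacle here: the content is entirely the identity $\overline{\overline{U}} = \{x \mid x^\uparrow \subseteq U^\downarrow\}$ together with careful complementation. The only point demanding attention is to keep the two topologies apart --- regularity and the $\Int_\leq, \Cl_\leq$ operators live in the order topology, while clopenness (and the appeal to axiom (iii)) lives in the Esakia topology --- and to make sure each set written down is a genuine clopen upset so that it lies in $\mathcal{CU}(\fE)$.
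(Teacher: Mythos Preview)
Your argument is correct. The treatment of (i)\,$\Leftrightarrow$\,(ii) is identical to the paper's: both reduce to the pointwise identity $x\in\overline{\overline{U}}\iff x^\uparrow\subseteq U^\downarrow\iff x\in\Int_\leq(\Cl_\leq(U))$.

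For (ii)\,$\Leftrightarrow$\,(iii) you take a slightly different route. The paper proves each implication by direct element chasing: given $x\in U^\downarrow\setminus U$ it locates $x$ in $\overline{U}^\downarrow\setminus\overline{U}$ using (ii), and conversely shows $U=(\overline{U}^\downarrow)^c$ by two inclusions. You instead rewrite (iii) via the identities $(\overline{U})^c=U^\downarrow$ and $\overline{U}^\downarrow=(\overline{\overline{U}})^c$ into the single set inclusion $\overline{\overline{U}}\subseteq U\cup\overline{U}$, and then close the equivalence using the Heyting-algebra facts $U\subseteq\overline{\overline{U}}$ and $\overline{\overline{U}}\cap\overline{U}=\emptyset$. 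This is a mildly more algebraic and arguably cleaner packaging of the same content; it has the advantage of making transparent that (iii) is just the lattice inequality $\neg\neg U\leq U\vee\neg U$ in $\mathcal{CU}(\fE)$, while the paper's version stays closer to the order-theoretic picture of points.
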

\begin{proof}
	Firstly, we notice that just by the definition of closure and interior we immediately obtain the following:
	\begin{align*}
	x\in \Int_{\leq}(\Cl_{\leq}(U))  \Longleftrightarrow x^\uparrow \subseteq U^\downarrow \Longleftrightarrow x\notin ((U^\downarrow)^c)^\downarrow \Longleftrightarrow x\in \overline{\overline{U}},
	\end{align*}
	\noindent showing the equivalence of (i) and (ii). The equivalence between (ii) and (iii) is then proved as follows.	    
	\begin{description}
		\item[$(ii)\Rightarrow (iii)$] \medskip Suppose $U=\overline{\overline{U}}$ and let $x\in U^\downarrow\setminus U$.
		Since $x\notin U = (\overline{U}^{\downarrow})^c$, it follows that $x \in \overline{U}^{\downarrow}$.
		Moreover, since $x\in U^\downarrow$, we have that $x\notin ( U^\downarrow)^c = \overline{U}$.
		That is, $x\in  \overline{U}^\downarrow\setminus \overline{U}$.
		
		\item[$(iii)\Rightarrow(ii)$] Suppose that $U^\downarrow\setminus U \,\subseteq\, \overline{U}^\downarrow\setminus \overline{U}$ , we want to show that $U = (\overline{U}^{\downarrow})^c$. 
		
		($\subseteq$)
		Take $x \in U$ and consider any $y\geq x$, which lies again in $U$ since it is an upset.
		Since $U \cap \overline{U} = \emptyset$ it follows that $y \notin \overline{U}$;
		and since $y$ is an arbitrary element above $x$, it follows that $x\notin \overline{U}^{\downarrow}$, that is, $x \in (\overline{U}^{\downarrow})^c$.
		
		$(\supseteq)$
		Now suppose $x \in (\overline{U}^{\downarrow})^c$, which entails $x\notin \overline{U}^\downarrow \setminus \overline{U} $.
		Thus by assumption we have that $x\notin U^\downarrow \setminus U $.
		Then, either $x\in U$, which proves our claim, or $x\notin U^\downarrow$.
		However the latter gives a contradiction, since $x\in (U^\downarrow)^c = \overline{U}$ contradicts our assumption that $x\in  (\overline{U}^{\downarrow})^c \subseteq \overline{U}^c$.
		Hence we have that $x\in U$, which proves our claim. \qedhere
	\end{description}
\end{proof}		

\noindent	Now, given an upset $Q$ we indicate with $M(Q)$ the set of \emph{maximal} elements of $Q$, that is:
\begin{equation*}
M(Q)  \;:=\;  \{ q\in Q  \mid   \forall q' \in Q.( q' \geq q \implies q' = q )  \}.
\end{equation*}

\noindent We often write simply $M(p)$ in place of $M(p^\uparrow)$. We especially remark that, by compactness, it follows that for every Esakia space $\fE$ and for every element $x\in \fE$ the set $M(x)$ is nonempty -- see e.g. \cite[Thm. 3.2.1]{9783030120955}. An important characterisation of elements in $\mathcal{RCU}(\espace{E})$ is then in terms of the maximal elements of the Esakia space $\espace{E}$, as the following proposition makes precise. 

\begin{proposition}\label{proposition:point condition regular upsets}
	Let $\fE$ be an Esakia space and $U\in\mathcal{CU}(\fE)$.
	Then the following are equivalent:
	\begin{enumerate}
		\item $U$ is regular;
		\item For every $x\in \espace{E}$ we have that $x\in U$ if and only if $ M(x)\subseteq U$.
	\end{enumerate}
\end{proposition}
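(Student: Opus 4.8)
The plan is to reduce the statement to the characterisation already obtained in \cref{regulars} and then to isolate a single pointwise lemma relating the set of maximal elements above a point to the downset of $U$. Recall from \cref{regulars} that $U$ is regular precisely when $U=\overline{\overline{U}}$, and that the computation at the start of the proof of that proposition established the pointwise identity $x\in\overline{\overline{U}}\iff x^\uparrow\subseteq U^\downarrow$. Hence condition (i) is equivalent to the requirement that for every $x\in\fE$ one has $x\in U$ if and only if $x^\uparrow\subseteq U^\downarrow$. So it suffices to show that, for every point $x$, the conditions $x^\uparrow\subseteq U^\downarrow$ and $M(x)\subseteq U$ coincide; once this is established, replacing the former by the latter in the biconditional just stated turns condition (i) directly into condition (ii).

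The crux is therefore the claim: for every $x\in\fE$, $M(x)\subseteq U$ if and only if $x^\uparrow\subseteq U^\downarrow$. For the direction from right to left I would argue by maximality: given $m\in M(x)$, since $m\in x^\uparrow\subseteq U^\downarrow$ there is some $z\geq m$ with $z\in U$, and maximality of $m$ forces $z=m$, whence $m\in U$; as $m$ was arbitrary this gives $M(x)\subseteq U$. This direction is elementary and uses nothing beyond the definition of $U^\downarrow$.

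For the converse I would invoke compactness. First I would record the order-theoretic observation that any maximal element of $y^\uparrow$ is in fact maximal in the whole space, so that whenever $y\geq x$ we have $M(y)\subseteq M(x)$. Now take an arbitrary $y\in x^\uparrow$; by compactness every element of $\fE$ lies below some maximal one, so $M(y)\neq\emptyset$ (this is the fact recalled just before the statement, e.g. \cite[Thm. 3.2.1]{9783030120955}). Choosing $m\in M(y)\subseteq M(x)\subseteq U$ gives $m\geq y$ with $m\in U$, so $y\in U^\downarrow$; since $y$ was arbitrary, $x^\uparrow\subseteq U^\downarrow$. I expect this to be the main obstacle: without compactness the nonemptiness of $M(y)$ could fail, and then some $y$ above $x$ might have no maximal element above it witnessing membership in $U^\downarrow$, so the inclusion would break down. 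Finally I would note that neither direction of the pointwise lemma actually uses that $U$ is clopen or an upset, so beyond \cref{regulars} the only substantive ingredient is the compactness of $\fE$.
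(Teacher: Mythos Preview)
Your proof is correct and neatly organised, but it proceeds by a different decomposition than the paper's. The paper argues the two implications separately and works via condition~(iii) of \cref{regulars}, namely $U^\downarrow\setminus U\subseteq\overline{U}^\downarrow\setminus\overline{U}$: for $(i)\Rightarrow(ii)$ it assumes $M(x)\subseteq U$ and $x\notin U$, obtains $x\in\overline{U}^\downarrow\setminus\overline{U}$ from regularity, and then uses compactness to find a maximal point above $x$ in $\overline{U}$, contradicting $M(x)\subseteq U$; for $(ii)\Rightarrow(i)$ it checks condition~(iii) directly from the hypothesis. By contrast, you pass through condition~(ii) of \cref{regulars} in its pointwise form $x\in U\iff x^\uparrow\subseteq U^\downarrow$, and then isolate the single unconditional lemma $M(x)\subseteq U\iff x^\uparrow\subseteq U^\downarrow$, valid for every $x$ and every subset $U$, whose substitution into the previous biconditional yields the statement at once. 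Your route makes it transparent that the only genuine topological ingredient beyond \cref{regulars} is the existence of maximal elements above any point, and it avoids the contradiction argument; the paper's route, on the other hand, stays closer to the set-algebraic description of regularity and does not require unpacking the pointwise identity for $\overline{\overline{U}}$.
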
	
\begin{proof}
	Firstly notice that, if $x \in U$ then $M(x) \subseteq U$ since $U$ is an upset.
	So in particular $(ii)$ boils down to the right to left direction. We prove the two implications $(i) \Rightarrow (ii)$ and $(ii) \Rightarrow (i)$ separately.
	\begin{description}
		\item[$(i)\Rightarrow (ii)$]
		Given $x \in \espace{E}$, suppose that $M(x)\subseteq U$;
		we want to show that $x \in U$. Towards a contradiction, assume that $x\notin U$, which together with the previous assumption entails $x \in U^{\downarrow}\setminus U$.
		Since $U$ is regular by assumption, by Proposition \ref{regulars} it follows that $x\in \overline{U}^\downarrow\setminus \overline{U}$.
		Since $\overline{U}$ is an upset itself and, by the remark above, there are maximal elements above every point of an Esakia space, it follows that $M(x) \cap \overline{U} \ne \emptyset$.
		But this is in contradiction with $M(x) \subseteq U$ since $U \cap \overline{U} = \emptyset$. 
		
		\item[$(ii)\Rightarrow (i)$]
		By Proposition \ref{regulars}, it suffices to show that if $x \in U^{\downarrow} \setminus U$ then $x \in \overline{U}^{\downarrow} \setminus \overline{U}$.
		So consider $x \in U^{\downarrow} \setminus U$.
		Since $x\notin U$, by assumption $M(x)\nsubseteq U$.
		By maximality of the elements in $M(x)$, we have that  $M(x)\nsubseteq U^\downarrow$, thus $M(x) \cap \overline{U} \ne \emptyset$.
		This implies that $x \in \overline{U}^{\downarrow}$.	Moreover, since  $x\in U^\downarrow$ we have that $x\notin \overline{U}$, thus concluding that $x \in \overline{U}^\downarrow \setminus \overline{U}$. \qedhere
	\end{description}
\end{proof}

By \cref{regulars} the elements of $\mathcal{RCU}(\mathfrak{E})$ correspond one-to-one to the regular elements of $H_{\espace{E}}$. In the light of this fact, it immediately follows that $ \mathcal{RCU}(\mathfrak{E}) $ is a Boolean algebra, where negation is defined as $\neg U:= \overline{U}$ and disjunction as $U\dot{\lor} V:= \neg (\overline{U}\cap \overline{V})$. \cref{proposition:point condition regular upsets} suggests then a connection between the Boolean algebras of regular elements and the \emph{Stone space of the maximal elements of $\espace{E}$}. Consider the set $M_{\espace{E}}$ of maximal elements of $\espace{E}$. It is well-known \cite[Thm. 3.2.3]{9783030120955} that this set forms a Stone space under the relative topology $\tau_{M_\espace{E}}$ inherited from $\espace{E}$:
\begin{equation*}
U\in \tau_{M_\mathfrak{E}}  \;\Longleftrightarrow\;  \exists V \in \tau_{\mathfrak{E}} \text{ such that } U= V\cap  M_\mathfrak{E}.
\end{equation*}

\noindent The following theorem provides a correspondence between $\mathcal{RCU}(\espace{E})$ and $\cC(M_\espace{E})$. We attribute this result to Esakia, as it is mentioned in \cite[A.2.1]{9783030120955}, but we develop the proof idea from \cite[\S 3]{10.2307/20016257}.

\begin{theorem}[Esakia]\label{hmap}
	Let $\espace{E}$ be an Esakia space and $\mathcal{C}(M_{\espace{E}})$ the clopen sets of the Stone space $M_{\espace{E}}$.
	Then the map:	
	\begin{align*}
	M: & \: \mathcal{RCU}(\mathfrak{E}) \rightarrow	\mathcal{C}(M_\mathfrak{E}) \\
	M: & \: U \mapsto U \cap M_\mathfrak{E}
	\end{align*}
	\noindent is an isomorphism of Boolean algebras.
\end{theorem}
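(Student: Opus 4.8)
The plan is to verify that $M$ is a well-defined Boolean homomorphism and then to establish bijectivity, with surjectivity as the only substantial step. Well-definedness is immediate: if $V\in\mathcal{RCU}(\espace{E})$ then $V$ is clopen in the Esakia topology, so $V\cap M_{\espace{E}}$ is the restriction of a clopen set and hence clopen in the subspace $M_{\espace{E}}$. Preservation of finite meets, of $\espace{E}$, and of $\emptyset$ is clear from $M(U)=U\cap M_{\espace{E}}$. The one computation worth isolating is preservation of negation: for a maximal point $m$ one has $m\in U^{\downarrow}$ iff $m\in U$, since $u\geq m$ forces $u=m$. Hence for every upset $U$ we get $M(\overline{U})=\overline{U}\cap M_{\espace{E}}=M_{\espace{E}}\setminus(U\cap M_{\espace{E}})$, so $M$ sends the Boolean complement $\neg U=\overline{U}$ to the set-theoretic complement in $M_{\espace{E}}$. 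Since the join of $\mathcal{RCU}(\espace{E})$ is defined from $\cap$ and $\overline{(\cdot)}$ by $U\mathbin{\dot{\lor}}V=\overline{\overline{U}\cap\overline{V}}$, preservation of $\dot{\lor}$ follows automatically, and $M$ is a homomorphism of Boolean algebras.

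For injectivity I would invoke \cref{proposition:point condition regular upsets}. Suppose $U,V\in\mathcal{RCU}(\espace{E})$ satisfy $U\cap M_{\espace{E}}=V\cap M_{\espace{E}}$. Since the maximal elements above any point lie in $M_{\espace{E}}$, regularity gives $x\in U$ iff $M(x)\subseteq U$ iff $M(x)\subseteq U\cap M_{\espace{E}}$, and symmetrically for $V$; the hypothesis then forces $x\in U\Leftrightarrow x\in V$ for all $x$, so $U=V$.

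The main obstacle is surjectivity: given $C\in\mathcal{C}(M_{\espace{E}})$ I must produce a regular clopen upset restricting to it. Writing $C'=M_{\espace{E}}\setminus C$, both $C$ and $C'$ are clopen in the Stone space $M_{\espace{E}}$, hence compact. Fix $m\in C$; for each $m''\in C'$ the two points are distinct maximal elements, so $m\nleq m''$, and the Priestley separation axiom yields a clopen upset containing $m$ but not $m''$. The complements of these clopen upsets form an open cover of the compact set $C'$, so finitely many of the upsets intersect to a clopen upset $U_m\ni m$ with $U_m\cap C'=\emptyset$. Letting $m$ range over the compact set $C$ and extracting a finite subcover of $\{U_m\}_{m\in C}$, the union of the chosen $U_m$ is a clopen upset $W$ with $C\subseteq W$ and $W\cap C'=\emptyset$, that is $W\cap M_{\espace{E}}=C$.

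Finally I would regularize $W$. The set $\overline{\overline{W}}$ is again a clopen upset, since $\overline{(\cdot)}$ preserves these by the Esakia axiom (iii), and it is regular by \cref{regulars}, because $\overline{\overline{\overline{\overline{W}}}}=\overline{\overline{W}}$ holds in the Heyting algebra $H_{\espace{E}}$. Applying the negation computation of the first paragraph twice gives $M(\overline{\overline{W}})=M(W)=C$, so $\overline{\overline{W}}\in\mathcal{RCU}(\espace{E})$ witnesses surjectivity. Being a bijective homomorphism of Boolean algebras, $M$ is an isomorphism. The only genuinely delicate point is the two-stage compactness argument producing $W$; every other step reduces to the pointwise behaviour of $U^{\downarrow}$ at maximal elements.
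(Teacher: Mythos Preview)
Your proof is correct and coincides with the paper's on well-definedness, the homomorphism verification, and injectivity via \cref{proposition:point condition regular upsets}. For surjectivity you take a genuinely different route. The paper lifts $C\in\mathcal{C}(M_{\espace{E}})$ to an ambient clopen $V\subseteq\espace{E}$ using the definition of the subspace topology, sets $Z=V^{\downarrow}\setminus(V^c)^{\downarrow}$, and asserts this is already regular; you instead run a two-stage Priestley-separation-and-compactness argument to build a clopen \emph{upset} $W$ with $W\cap M_{\espace{E}}=C$, and then explicitly regularize to $\overline{\overline{W}}$. Your construction of $W$ is more laborious than the paper's one-line appeal to the relative topology, but the final double-negation step makes regularity automatic from the Heyting identity $\neg\neg\neg\neg=\neg\neg$, and your earlier computation that $\overline{(\cdot)}$ acts as set-theoretic complement on $M_{\espace{E}}$ guarantees the trace is preserved. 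In fact the paper's regularity claim for $Z$ is sensitive to which lift $V$ is chosen (for the three-point poset $a<b$, $a<c$ with $C=\{b,c\}$ and the lift $V=\{b,c\}$ one obtains $Z=\{b,c\}$, which fails the criterion of \cref{proposition:point condition regular upsets} at $a$), so your explicit regularization step is arguably the more robust of the two arguments.
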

\begin{proof}
	First, we show that $M$ is well-defined:
	let $U\in \mathcal{RCU}(\espace{E})$.
	Since $U$ is a clopen of $\espace{E}$, then $M(U) = U\cap M_\mathfrak{E}$ is a clopen of $M_{\espace{E}}$ by definition of the relative topology.
	
	Secondly, we check that $M$ is a homomorphism.
	The only non-trivial case to check is the condition for negation.
	Let $U \in \mathcal{RCU}(\espace{E})$, then we have:
	\begin{equation*}
	M(\neg U) \;=\;
	M(\, \overline{U} \,) \;=\;
	M_\mathfrak{E} \cap (U^{\downarrow})^c \;=\;
	M_\mathfrak{E}\setminus M(U) \;=\;
	\neg M(U);
	\end{equation*}
	where the latter negation is computed in the Boolean algebra $\mathcal{C}(M_\mathfrak{E}) $.
	
	Thirdly, we show that $M$ is injective.
	Suppose $M(U) = M(V)$ for $U,V \in \mathcal{RCU}(\espace{E})$, then for any $x\in \espace{E}$ we have that $M(x)\subseteq U$ if and only if $M(x)\subseteq V$.
	So by Proposition \ref{proposition:point condition regular upsets} it follows that $x\in U$ if and only if $x\in V$, whence  $U = V$.
	
	Finally, we show that $M$ is surjective.
	Let $U\in \mathcal{C}(M_\espace{E})$, since $U$ is a clopen of $M_{\espace{E}}$ under the relative topology and $M_{\espace{E}}$ is closed in $\fE$, we have by compactness that $U=V\cap M_{\espace{E}}$ for some  $V$ clopen in the Esakia topology of $\fE$. By the display above, we have that for all $W\in \mathcal{CU}(\espace{E})$, $M(\comp{W})=M_\fE\setminus M(W)$, from which it follows that
	\[ M(\comp{\comp{V}}) = M_\fE\setminus(M_\fE\setminus M(V))=M_\fE\cap V =U. \]
	Since  $\comp{\comp{V}}\in \mathcal{RCU}(\espace{E}) $ this shows that $M$ is also surjective.
\end{proof}

\noindent The following corollary follows immediately using Stone duality. Notice that if $B$ is a Boolean algebra we write $\fS_B$ for its dual Stone space.

\begin{corollary}\label{stonemaxdual}
	Let $H$ be a Heyting algebra, then the Stone dual $\fS_{H_\neg}$ of the Boolean algebra $H_\neg $ is isomorphic to the Stone space $M_{\espace{E}_H}$, i.e. $ \fS_{H_\neg}\cong M_{\espace{E}_H}$.
\end{corollary}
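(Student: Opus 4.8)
The plan is to assemble \cref{hmap}, the correspondence between regular clopen upsets and regular elements, Esakia duality, and Stone duality into a single chain of Boolean-algebra isomorphisms, and then to dualise.

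First I would instantiate the ambient Esakia space as the dual $\espace{E}_H$ of $H$. By \cref{theorem:esakiaDuality} there is a Heyting isomorphism $H \cong H_{\espace{E}_H}$. Since the property of being regular, $x = \neg\neg x$, is expressible in the Heyting signature and Heyting isomorphisms commute with $\neg$, this isomorphism carries regular elements to regular elements, and hence restricts to a Boolean isomorphism $H_\neg \cong (H_{\espace{E}_H})_\neg$ of the associated algebras of regular elements.

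Next I would use the remark following \cref{proposition:point condition regular upsets}: the regular elements of $H_{\espace{E}_H}$ are precisely the regular clopen upsets, so $(H_{\espace{E}_H})_\neg \cong \mathcal{RCU}(\espace{E}_H)$ as Boolean algebras. \cref{hmap} then supplies a Boolean isomorphism $\mathcal{RCU}(\espace{E}_H) \cong \mathcal{C}(M_{\espace{E}_H})$. Composing the three isomorphisms yields
\[ H_\neg \;\cong\; \mathcal{C}(M_{\espace{E}_H}). \]
Finally I would invoke Stone duality. Since Stone duality is a dual equivalence of categories, isomorphic Boolean algebras have homeomorphic dual Stone spaces; and for any Stone space $X$ the Boolean algebra $\mathcal{C}(X)$ of its clopens has $X$ as its Stone dual, i.e. $\fS_{\mathcal{C}(X)} \cong X$. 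As $M_{\espace{E}_H}$ is a Stone space, applying this with $X = M_{\espace{E}_H}$ gives $\fS_{H_\neg} \cong \fS_{\mathcal{C}(M_{\espace{E}_H})} \cong M_{\espace{E}_H}$, as claimed.

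The only step requiring any care is the first: checking that the Esakia isomorphism genuinely restricts to the Boolean reducts of regular elements. This is routine, as it follows from the fact that $\neg$ is term-definable and preserved by Heyting homomorphisms, so the restriction is automatically a $\{\land,\to,0,1\}$-homomorphism and, since the Boolean join $\dot{\lor}$ is itself definable from $\land$ and $\neg$, a full Boolean homomorphism. I therefore do not expect any genuine obstacle, the substance of the corollary being already carried by \cref{hmap}.
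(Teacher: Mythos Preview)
Your proposal is correct and follows essentially the same route as the paper: instantiate \cref{hmap} at $\espace{E}_H$, identify $H_\neg$ with $\mathcal{RCU}(\espace{E}_H)$ via Esakia duality, and then dualise along Stone duality. The paper compresses the argument into two lines, while you have spelled out the intermediate identifications more carefully, but the substance is identical.
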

\begin{proof}
	By Theorem \ref{hmap} we have that $\mathcal{RCU}(\espace{E}_H)\cong \mathcal{C}(M_{\espace{E}_H})$, and consequently $H_\neg\cong\mathcal{C}(M_{\espace{E}_H})$.
	By Stone duality it follows that $ \fS_{H_\neg}\cong M_{\espace{E}_H} $.
\end{proof}

\section{Regular Esakia Spaces}\label{sec.characterisation}

A main goal of this work is to study Esakia spaces dual to regular Heyting algebras. We start by giving them a name.

\begin{definition}
	An Esakia space $\fE$  is \emph{regular} if $H_\mathfrak{E}= \langle  (H_{\mathfrak{E}})_\neg \rangle $.
\end{definition}

\noindent Given an Esakia space $\fE$, we also write $\mathfrak{E}_r$ for the Esakia space dual to $\langle (H_{\mathfrak{E}})_\neg \rangle$.  The notion of regular Esakia spaces is thus defined in \textit{external} terms, by means of Esakia duality. In this section we consider the problem of providing an \textit{internal} characterisation of  regular Esakia spaces.

We give two partial answers to this question. Firstly, in \cref{morphism.characterisation}, we give a characterisation of regular Esakia spaces in terms of special p-morphisms, and we apply it to the finite case to obtain a more fine-grained description. Secondly, in \cref{quotient.characterisation}, we follow an alternative approach in terms of suitable equivalence relations.  This allows us to obtain a necessary condition for an Esakia space to be regular and also a description of finite regular posets. Finally, in  \cref{results_regular_ipc}, we use duality methods to prove some additional results on varieties generated by regular Heyting algebras.

\subsection{A Characterisation by Regular-Preserving Morphisms} \label{morphism.characterisation}

One way to characterise regular Heyting algebras is to look at homomorphisms fixing their Boolean algebra of regular elements. This motivates the following definition. 

\begin{definition} Let $h:\mathfrak{E}\rightarrow \mathfrak{E}'$ be a p-morphism, then \emph{$h$ preserves regulars} if $ h^{-1}: \mathcal{RCU}(\mathfrak{E}')  \to \mathcal{RCU}(\mathfrak{E})$ is an isomorphism of Boolean algebras.
\end{definition}

\noindent As regular clopen upsets of Esakia spaces correspond to clopens of maximal elements, it follows that regular preserving p-morphisms can be characterised in terms of their action on maximal points. 

\begin{proposition}\label{maximalinjectivity}
	Let $h:\mathfrak{E}\rightarrow \mathfrak{E}'$ be a p-morphism, then $h$ preserves regulars if and only if $h\restriction M_\mathfrak{E}$ is a homeomorphism.
\end{proposition}
\begin{proof} We provide full details for the left to right direction and simply resort to Stone duality for the converse.
	
	$(\Rightarrow)$ Since $h$ is continuous, it suffices to check that it is a bijection.  We first show that $h\restriction M_\mathfrak{E}$ is an injection. Consider two distinct $x,y\in M_{\mathfrak{E}}$, since $M_{\mathfrak{E}}$ is a Stone space there are two disjoint clopen neighbourhoods $U_x, U_y$ of $x$ and $y$ respectively. From Theorem \ref{hmap} it follows that  $M^{-1}(U_x)\cap M^{-1}(U_y) = \emptyset $. Now, since $h$ preserves regulars, it follows that $h^{-1}{\restriction}\mathcal{RCU}(\mathfrak{E}') $ is an isomorphism, whence $M^{-1}(U_x)= h^{-1}(V_x)$ and $M^{-1}(U_y)= h^{-1}(V_y)$ for some $V_x,V_y\in \mathcal{RCU}(\mathfrak{E}')$ such that $V_x\cap V_y=\emptyset$. Then, it follows that $h(x)\in V_x$ and $h(y)\in V_y$, whence $h(x)\neq h(y)$.
	
	Now let $x\in M_{\fE'}$ and consider the family  $\{U_x^i\mid i\in I\}$ of all clopen neighbourhoods of $x$ in $M_{\fE'}$. We notice that, since any two points in $M_{\fE'}$ are separated by a clopen,  $\bigcap_{i\in I} U^i_x=\{x\}$. Since $ h^{-1}: \mathcal{RCU}(\mathfrak{E}')  \to \mathcal{RCU}(\mathfrak{E})$ is an isomorphism of Boolean algebras, it follows by \cref{hmap} that $g:=M\circ h^{-1}\circ M^{-1}$ is an isomorphism between $ \mathcal{C}(M_{\fE'})$ and $ \mathcal{C}(M_{\fE})$. Now, if $\bigcap_{i\in I} g(U^i_x)=\emptyset$, then by compactness there is some finite $I_0\subseteq I$ such that $\bigcap_{i\in I_0} g(U^i_x)=\emptyset$, contradicting $\bigcap_{i\in I_0} U^i_x\neq\emptyset$. Let $y\in \bigcap_{i\in I} g(U^i_x) $, then $y$ is maximal and additionally $h(y)\in M^{-1}(\bigcap_{i\in I} U^i_x)$. Since $h(y)$ must also be maximal and $M(M^{-1}(\bigcap_{i\in I} U^i_x))=\{x\}$, this shows that $h$ is also surjective.
	
	$(\Leftarrow)$ Since $h\restriction M_\mathfrak{E}$ is a bijection, it follows by Stone duality that the map $M\circ h^{-1} \circ M^{-1}$ is an isomorphism between $ \mathcal{C}(M_{\fE'})$ and $ \mathcal{C}(M_{\fE})$. By \cref{hmap} we then have that $ h^{-1}: \mathcal{RCU}(\mathfrak{E}')  \to \mathcal{RCU}(\mathfrak{E})$ is an isomorphism of Boolean algebras, which proves our claim.
\end{proof}

\noindent It is then immediate to conclude that the embedding of a Heyting algebra into one with the same regular elements induces a surjective p-morphism of the dual spaces which is injective on the maximal elements. 

\begin{corollary}\label{corolla}
	Let $A,B \in \HA$, $A\preceq B$ and $A_\neg=B_\neg$, then there is a surjective p-morphism $h:\fE_B\twoheadrightarrow \fE_A$ which is also injective on maximal elements.
\end{corollary}
\begin{proof}
	By Esakia duality, the inclusion $A\preceq B$ induces a p-morphism $h:\fE_B\twoheadrightarrow \fE_A$ defined  by $h: F \mapsto F\cap A$,  where $F\subseteq B$ is any prime filter over $B$. The fact  that $h$ is continuous and surjective already follows from the duality between subalgebras and quotient spaces. By Proposition \ref{maximalinjectivity} above we also have that $h$ is injective on maximal elements.
\end{proof}

\noindent The following theorem provides a characterisation of regular Esakia spaces.

\begin{theorem}\label{theoremregular}
	The following are equivalent, for any Esakia Space $\mathfrak{E}$:
	\begin{enumerate}
		\item $H_{\mathfrak{E}}$ is regular;
		\item For any Heyting algebra $K$, $ K\preceq H_{\mathfrak{E}}$  and $ (H_{\mathfrak{E}})_\neg=K_\neg$ entail $K= H_{\mathfrak{E}}$;
		\item For any Esakia space $\mathfrak{E}'$ and any surjective p-morphism  $f:\mathfrak{E}\twoheadrightarrow \mathfrak{E}'$, if $f{\restriction}  M_\mathfrak{E}$  is a homeomorphism, then $f$ is a homeomorphism.
	\end{enumerate}
\end{theorem}
\begin{proof}
	Claims (i) and (ii) are equivalent by the definition of being regular. We show the equivalence of (ii) and (iii).
	
	$(ii)\Rightarrow (iii)$. Let $ f:\mathfrak{E}\twoheadrightarrow \mathfrak{E}'$ be a surjective p-morphism, then by Esakia duality we have that $f^{-1}[H_{\mathfrak{E}'}]\preceq H_{\mathfrak{E}}$. By Stone duality, if  $f\restriction  M_\mathfrak{E}$ is a homeomorphism, then $M\circ f^{-1} \circ M^{-1}$ is an isomorphism of Boolean algebras and so by  \cref{hmap}  $f^{-1}: \mathcal{RCU}(\mathfrak{E}') \rightarrow \mathcal{RCU}(\mathfrak{E})   $ is an isomorphism. Since  $\mathcal{RCU}(\mathfrak{E})=(H_{\mathfrak{E}})_\neg$ and $\mathcal{RCU}(\mathfrak{E}')=(H_{\mathfrak{E}'})_\neg$, it follows that $(f^{-1}[H_{\mathfrak{E}'}])_\neg=f^{-1}[(H_{\mathfrak{E}'})_\neg]= (H_{\mathfrak{E}})_\neg$, which by (ii) entails  $f^{-1}[H_{\mathfrak{E}'}]=H_{\mathfrak{E}}$. By Esakia duality it follows that $f$ is injective, and so is a homeomorphism.
	
	$(iii)\Rightarrow (ii)$. Let $K\preceq H_{\mathfrak{E}}$ be such that $K_\neg = (H_{\mathfrak{E}})_\neg$. By Corollary \ref{corolla}, there is a surjective p-morphism $h:\fE\twoheadrightarrow \fE_K$ which is also injective on maximal elements. Hence, $h{\restriction}  M_\mathfrak{E}$ is a continuous bijection of Stone spaces and thus a homeomorphism. By (iii) it follows that $h:\mathfrak{E}\twoheadrightarrow \fE_K$ is a homeomorphism of Esakia spaces and, by Esakia duality, we obtain that $K= H_{\mathfrak{E}}$.
\end{proof}

In the finite context the characterisation of the previous theorem can be further strengthened. We recall the following definitions of  $\alpha$-reductions and $\beta$-reductions \cite{de1966connection, bezhanishvili2006lattices}.

\begin{definition}
	Let $\fF$ be a partial order and $x,y\in \fF$ be distinct elements.
	\begin{itemize}
		\item  Suppose $x^\uparrow=y^\uparrow \cup \{x\}$. An \emph{$\alpha$-reduction} is a surjection $h:\fF\to \fF\setminus\{y\}$ such that $h(y)=x$ and $h(z)=z$ whenever $z\neq y$.
		\item  Suppose   $x^\uparrow\setminus \{x\}=y^\uparrow\setminus\{y\}$. A \emph{$\beta$-reduction} is a surjection $h:\fF\to \fF\setminus\{y\}$ such that $h(y)=x$ and $h(z)=z$ whenever $z\neq y$.
	\end{itemize}
\end{definition}	
\noindent Notice that, since $\alpha$-reductions and $\beta$-reductions are not necessarily continuous with respect to the Stone topology of an Esakia space, we have introduced them only with respect to partial orders and not for Esakia spaces. This explains why we will use them to characterize only finite regular Esakia spaces, whose underlying topology is discrete.  In particular, in the finite case we can always look at immediate successors of points of a poset: given any $x\in\fF$, we let $S(x):=\{ y\in \fF \mid  x<y \text{ and } x< z\leq y \Rightarrow z=y  \}$.  We recall that a Heyting algebra is \textit{subdirectly irreducible} if it has a second greatest element.

\begin{theorem}\label{characterisation1}
	$H$ is a finite, regular (subdirectly irreducible) Heyting algebra if and only if $\fE_H$ is a finite, (rooted) poset such that:
	\begin{itemize}
		\item[(i)] For all non-maximal $x\in \fE_H$, $|S(x)|\geq 2$.		
		\item[(ii)] For all non-maximal $x,y\in \fE_H$, if $x\neq y$, then $S(x)\neq S(y).$
	\end{itemize}
\end{theorem}
\begin{proof}
	It suffices to consider conditions (i) and (ii) since it is already well-known that finite (subdirectly irreducible) Heyting algebras correspond to finite (rooted) posets under Esakia duality. 
	
	$(\Rightarrow)$	(i) If this is not the case, then there is a point $x\in \fE_H$ such that $S(x)=\{y\}$. Then,  we can apply the $\alpha$-reduction $h$ such that $h(x)=h(y)$ and $h(z)=z$ for all $z\neq x$. Then $h$ is a p-morphism which is injective on maximal elements, hence, by \cref{theoremregular}, $h$ is an isomorphism, contradicting $x\neq y$, $h(x)=h(y)$. (ii) If this is not the case, then there are two distinct $x,y\in\mathfrak{E}$ such that $S(x)=S(y)$. We then apply the $\beta$-reduction $h$ such that $h(x)=h(y)$ and $h(z)=z$ for all $z\neq y$. But then $h$ is a  p-morphism which is injective on maximal elements hence, by \cref{theoremregular}, $h$ must also be an isomorphism, which gives us a contradiction.		
	
	$(\Leftarrow)$ If $H$ is not regular, then $\langle H_\neg \rangle \preceq H$ and $\langle H_\neg \rangle \neq H$. By  \cref{corolla} there exists a p-morphism $h: \fE_H\twoheadrightarrow \mathfrak{E}_{\langle H_\neg \rangle}$ which is also injective on maximal elements. Moreover, since $\langle H_\neg \rangle \neq H$, we also have that $\fE_H\neq \mathfrak{E}_{\langle H_\neg \rangle}$, meaning that $h$ is not injective. Since  $\fE_H$ is finite, it follows that $h=f_0\circ\dots\circ f_n$, where each $f_i$   is either an $\alpha$- or a $\beta$-reduction -- see  \cite[Prop. 3.1.7]{bezhanishvili2006lattices}. In particular, $f_n$ is an $\alpha$- or a $\beta$-reduction over  $\fE_H$, meaning that either (i) or (ii) holds.
\end{proof}

\subsection{A Characterisation by Equivalence Relations} \label{quotient.characterisation}

Although \cref{theoremregular} and \cref{characterisation1} already give us suitable characterisations of (finite) regular Esakia spaces, we provide an alternative description of them in terms  of suitable  equivalence relations. This will make more explicit how finite regular posets are controlled by their maximal elements and will also allow for a finer analysis of polynomials of regular elements. 

To this end we introduce a way to identify points over an Esakia space. If $X\subseteq \fF $ and $\theta$ is an equivalence relation, we let $X/\theta:= \{[x]_\theta \mid x\in X  \} $. We define the following equivalence relations, which can also be seen as a kind of bounded bisimulations (see in particular  \cite{visser1996uniform}).

\begin{definition}
	Let $\fE$ be an Esakia space, we define:
	\begin{align*}
	x \eqrel_0 y \; &\;\Longleftrightarrow \; M(x)=M(y) \\
	x\eqrel_{n+1} y \; &\;\Longleftrightarrow \; x^\uparrow/\eqrel_n = y^\uparrow/\eqrel_n  \\
	\eqrel_{\infty} \; &=\;		 \bigcap_{n\in\omega} \eqrel_n.
	\end{align*}
\end{definition}

\noindent For any $x\in \fE$ we simply write  $[x]_{n}$ and $[x]_\infty$ for its equivalence class over $\eqrel_n$ and $\eqrel_\infty$ respectively. 

\begin{lemma}\label{lemma_inductive}
	Let $\fE$ be an Esakia space and $x,y\in \fE$, then $x\eqrel_{n}y$ entails $x\eqrel_{l}y$ for all $l\leq n<\omega$.
\end{lemma}
\begin{proof}
	By induction on $n\geq 1$.
	\begin{itemize}
		\item Let $x\eqrel_{1}y$ and suppose $x\neqrel_{0}y$. Then there is, without loss of generality, some $z\in M_\fE$ such that $x\leq z$ but $y\nleq z$. Thus for all $w\geq y$, $M(z)\neq M(w)$ and so $z\neqrel_{0}w$, which contradicts  $x\eqrel_{1}y$. 
		\item Let $x\eqrel_{n+1}y$ and suppose $x\neqrel_{l}y$ for some $l\leq n$. Then there is, without loss of generality, some $z\geq x$ such that for all $w\geq y$, $z\neqrel_{l-1}w$. By induction hypothesis it follows that $z\neqrel_{n}w$, contradicting $x\eqrel_{n+1}y$. \qedhere
	\end{itemize}
\end{proof}

The intuitive idea behind the relation $\eqrel_n$ is that it captures the equivalence of two points up to a certain complexity of polynomials (terms) over regular elements. To make this idea precise we recall the following notion of implication rank. The key idea is that the rank of an element of $\langle \mathcal{RCU}(\fE)\rangle$ should indicate ``how hard'' it is to obtain this elements from regular ones. 

\begin{definition}[Implication rank] $\;$
	
	\begin{enumerate}[label=(\alph*)]
		\item 
		Let $\phi$ be a polynomial, we define its \textit{implication rank} $\rank(\phi)$ recursively as follows:
		\begin{enumerate}[label=(\roman*)]
			\item If $\phi$ is a constant or variable, then   $\rank(\phi)=0$;
			\item $\rank(\psi\land \chi)=\text{max}\{\rank(\psi),\rank(\chi)  \};$
			\item $\rank(\psi\lor \chi)=\text{max}\{\rank(\psi),\rank(\chi)  \};$
			\item $\rank(\psi\to \chi)=\text{max}\{\rank(\psi),\rank(\chi)  \}+1$.
		\end{enumerate}
		\item Let $\fE$ be an Esakia space, then for every $U\in \langle \mathcal{RCU}(\fE)\rangle$ we let 
		\[ \rank(U)= \text{min}\{\rank(\phi) \mid \phi(V_0,\dots,V_n)=U \text{ for } V_0,\dots,V_n\in \mathcal{RCU}(\fE)   \}. \]
	\end{enumerate}
\end{definition}

The following lemma characterises the relation $\eqrel_{n}$ over (finite) Esakia spaces and it relates it to the implication rank of polynomials over regular elements.  This result mirrors Visser's classical result on bounded bisimulation \cite[Thms. 4.7-4.8]{visser1996uniform} and Esakia and Grigolia's characterisation of finitely generated Heyting algebras \cite{esakia1977criterion, bezhanishvili2006lattices}, but with the key difference that we restrict attention to polynomials over (possibly infinitely many) regular elements.

\begin{proposition}\label{lemmaquotient} $\;$
	\begin{itemize}
		\item[(i)] Let $\fE$ be an Esakia space and let $H = H_{\fE}$ be its dual Heyting algebra. For all $x,y \in \fE$ such that $x \eqrel_{n} y$, $ x \in U $ if and only if $ y \in U$, for all $U\in \langle H_\neg \rangle $ with  $\rank(U)\leq n $.
		\item[(ii)] Let $\fF$ be a finite poset and let $H = H_{\fF}$ be its dual Heyting algebra. If for all $U\in \langle H_\neg \rangle $ with $\rank(U)\leq n $ we have that $ x \in U $ if and only if $ y \in U$, then  $x \eqrel_{n} y$ for all $x,y \in \fF$.
	\end{itemize}
\end{proposition}
\begin{proof}
	We prove the claim (i) by induction on $n$. 
	\begin{itemize}
		\item Let $n=0$ and suppose  $x \eqrel_{0} y$. Let $U\in \langle H_\neg \rangle $ be such that $\rank(U)=0$, it follows $U\in H_\neg$. By $x \eqrel_{0} y$ we have $M(x)=M(y)$ and so $M(x)\subseteq U$ if and only if $M(y)\subseteq U$. Since $U\in \mathcal{RCU}(\fE)$, it follows by Proposition \ref{regulars} that $x\in U$ if and only if $y\in U$.
		
		\item  Let $n=m+1$ and suppose $x \eqrel_{m+1} y$. If $\rank(U)=k\leq m$ then the claim follows by the induction hypothesis together with \cref{lemma_inductive}. If $\rank(U)=m+1$ we proceed by induction on the complexity of the polynomial $\psi$ of least implication rank for which $U=\psi(V_0,\dots,V_k)$ where $V_i\in \mathcal{RCU}(\fF)$ for all $i\leq k$.
		\begin{itemize}
			\item If $\psi$ is atomic, $\psi= \alpha\land\beta$ or  $\psi=\alpha\lor \beta$, then the claim follows immediately by the induction hypothesis.
			
			\item If $\psi=\alpha\to \beta$, let $V=\alpha(V_1,\dots,V_k)$,  $W=\beta(V_1,\dots,V_k)$, clearly $\rank(V)\leq m$, $\rank(W)\leq m$ and $U= ((V\setminus W)^{\downarrow})^c$. 
			
			We show only one direction as the converse is analogous. Suppose $y\notin ((V\setminus W)^{\downarrow})^c$, then there is some $z\geq y$ such that $z\in V\setminus W$. Since $x \eqrel_{m+1} y$, we have $x^\uparrow/\eqrel_{m} = y^\uparrow/\eqrel_{m}$ and thus there is some $k\geq x $ such that $k \eqrel_{m} z$. By induction hypothesis  $k\in V\setminus W$, showing  $x\notin ((V\setminus W)^{\downarrow})^c$.
		\end{itemize} 
	\end{itemize} 
	
	\noindent We next prove item (ii). We let $\fF$ be a finite poset and we reason by induction.
	\begin{itemize}
		\item Let $n=0$ and suppose $x \neqrel_{0} y$. Without loss of generality we have that $M(y)\nsubseteq M(x)$, hence by \cref{hmap} there is a regular upset $U$ such that $M(x)\subseteq U$ and $M(y)\nsubseteq U$, whence by Proposition \ref{regulars} we have  $x\in U$ and $y\notin U$.
		
		\item Let $n=m+1$. If $x\neqrel_{m+1} y $  then $  x^\uparrow/\eqrel_{m} \neq y^\uparrow/\eqrel_{m}$,  hence (without loss of generality) there is $z\geq x$ such that for all $k\geq y$ we have $z\neqrel_{m} k$. By induction hypothesis, for every $k$, there is either an upset $V_k\in \langle H_\neg \rangle$ such that $z\in V_k$ and $k\notin V_k$, or an upset $U_k\in \langle H_\neg \rangle$ such that $z\notin U_k$ and $k\in U_k$, with $\rank(V_k),\rank(U_k)\leq m$ for all $k\geq y$. We let
		\begin{align*}
		I_0&=\{k\geq y \mid z\in V_k, k\notin V_k    \}\\
		I_1&=\{k\geq y \mid z\notin U_k, k\in U_k    \}.
		\end{align*}
		And we define $Z:=(( \bigcap_{k\in I_0} V_k \setminus  \bigcup_{k\in I_1} U_k )^\downarrow)^c $, i.e. $Z:= \bigcap_{k\in I_0} V_k \to \bigcup_{k\in I_1} U_k$ and thus $\rank(Z)\leq m+1$. Clearly $Z$ is an upset and by definition $Z\in \langle \mathcal{UR}(\fF)\rangle$.
		
		Now, since for every $k\in I_0$ we have $z\in V_k$ and for every $k\in I_1$ we have  $z\notin U_k$, it follows that $z\in \bigcap_{k\in I_0} V_k \setminus  \bigcup_{k\in I_1} U_k$, whence $x\notin Z $. 
		
		Then, if $k\in I_0$ then $k\notin V_k$, whence  $ k\notin \bigcap_{k\in I_0} V_k \setminus \bigcup_{k\in I_1} U_k$. Otherwise, if $k\in I_1$ then $k\in U_k$, whence  $ k\notin \bigcap_{k\in I_0} V_k \setminus \bigcup_{k\in I_1} U_k$. Since $I_0\cup I_1=y^\uparrow$, this shows that  $k\notin \bigcap_{k\in I_0} V_k \setminus \bigcup_{k\in I_1} U_k$ for all $k\geq y$ and so $y\in (( \bigcap_{k\in I_0} V_k \setminus  \bigcup_{k\in I_1} U_k )^\downarrow)^c =Z$, finishing our proof. \qedhere
	\end{itemize}		  
\end{proof}

\noindent We remark in passing that the second claim of the previous proposition can be extended to some specific classes of infinite Esakia spaces. For example, it can be generalized to Esakia spaces  dual to finitely generated Heyting algebras, or to Esakia spaces whose order reducts are \textit{image finite posets}, i.e. posets $\fE$ where $|x^\uparrow|<\omega$ for every $x\in \fE$.

The next proposition follows immediately. Notice that by Esakia duality we treat elements of a finite poset as filters over the dual Heyting algebra and  elements of a finite Heyting algebra as upsets of the dual poset.

\begin{proposition}\label{lemmaquotient2}
	Let $\fF$ be a finite poset and let $H = H_{\fF}$ be its dual Heyting algebra, the following are equivalent for any $x,y \in \fF$:
	\begin{enumerate}
		\item $x \eqrel_{\infty} y$;
		\item $x \cap \langle H_{\neg} \rangle  =  y \cap \langle H_{\neg} \rangle$;
		\item $\forall U \in \langle H_\neg \rangle. [ x \in U \iff y \in U ]$.
	\end{enumerate}
\end{proposition}
\begin{proof} Claims (ii) and (iii) are rephrasing of the same condition under Esakia duality. The equivalence of (i) and (iii) follows  from \cref{lemmaquotient}.
\end{proof}

We can use the relation $\eqrel_\infty$ to supplement \cref{maximalinjectivity} and characterise p-morphisms preserving polynomials of regulars between finite posets. 
\begin{definition}
	Let $h:\mathfrak{E}\rightarrow \mathfrak{E}'$ be a p-morphism,  then we say that \emph{$h$ preserves polynomials of regulars}  if  $ h^{-1}: \langle \mathcal{RCU}(\mathfrak{E}') \rangle \to \langle \mathcal{RCU}(\mathfrak{E})\rangle$ is an isomorphism of Heyting algebras.
\end{definition}

\begin{proposition}\label{preservation-polynomials}
	Let $h:\fF\to\fF'$ be a p-morphism between finite posets, then $h$ preserves polynomials of regulars if and only if  $x\neqrel_{\infty} y$ entails $h(x)\neqrel_{\infty}h(y)$.
\end{proposition}
\begin{proof} 
	$(\Rightarrow)$ Suppose $h:\fF\to\fF'$ is a p-morphism preserving polynomials of regular elements, and let $x,y\in \fF$ be such that $x\neqrel_{\infty} y$. Since $h$ preserves polynomials of regulars, the induced map $ h^{-1}: \langle \mathcal{UR}(\fF) \rangle \to \langle \mathcal{UR}(\fF')\rangle$ is an isomorphism of Heyting algebras. It follows that $x\in h^{-1}(U)$ and $y\notin h^{-1}(U)$ for some $h^{-1}(U)\in  \mathcal{UR}(\fF)$.  Hence, we obtain that $h(x)\in U$ and $h(y)\notin U$ for some $U\in  \mathcal{UR}(\fF)$, which by \cref{lemmaquotient2} proves our claim. $(\Leftarrow)$ Analogous to the previous direction.
\end{proof}

Finally, the next theorem  shows that regular Esakia spaces are stable under $\eqrel_\infty$. Moreover, in the finite setting, it provides us with a second characterisation of finite regular posets. 

\begin{theorem}\label{characterisation.regular} $\;$
	\begin{itemize}
		\item[(i)] Let $\fE$ be a regular Esakia space and $x,y\in \fE$; if  $x\eqrel_\infty y$ then $x=y$. 
		\item[(ii)] Let $\fF$ be a finite poset such that $x\eqrel_\infty y$ entails $x=y$, then $\fF$ is regular.
	\end{itemize}
	In particular, a finite Heyting algebra is regular if and only if its dual poset is stable under $\eqrel_{\infty}$.
\end{theorem}
\begin{proof}
	For claim (i) consider distinct $x,y\in \fE$, then there is a  clopen upset $U\in \mathcal{CU}(\fE)$ such that (without loss of generality) $x\in U$ and $y\notin U$. By regularity, it follows that $U=\psi(V_0,\dots,V_k)$ for some regular clopen upsets $V_i$, $i\leq k$. By \cref{lemmaquotient} it follows immediately that $x\neqrel_\infty y$.
	
	For claim (ii), consider an arbitrary surjective p-morphism  $p:\fF \twoheadrightarrow \fF'$ such that $p\restriction M_\fF$ is a bijection. We first prove by induction on $n<\omega$ that $x\neqrel_{n} y$ entails $p(x)\neqrel_{n}p(y)$. 
	\begin{itemize}
		\item If $x\neqrel_{0} y$, then $M(x)\neq M(y)$. Since $p\restriction M_\fF$ is a bijection, we have by the definition of p-morphism that $M(p(x))\neq M(p(y))$ and so $x\neq y$.
		\item If $x\neqrel_{n+1} y$, there is without loss of generality some $z\geq x$ such that $z\neqrel_{n} w$ for all $w\geq y$. By induction hypothesis we obtain that $p(z)\neq p(w)$ for all  $w\geq y$, and by the definition of p-morphism it follows $p(x)\neq p(y)$.
	\end{itemize}
	Thus we obtain that $p$ preserves polynomials of regulars. Finally, since  for $x,y\in \fF$ we have that $x\neq y$ entails $x\neqrel_\infty y$, this shows that $p$ is itself an injection, and thus by  \cref{theoremregular} we have that $\fF$ is regular.	
\end{proof}	

\noindent We conclude by noticing that, by our previous observations, an equivalent sufficient and necessary characterisation of regular Esakia spaces works in the restricted cases of  image-finite Esakia spaces or of Esakia spaces dual to finitely generated Heyting algebras.

\subsection{Varieties of (Strongly) Regular Heyting algebras }\label{results_regular_ipc}

We conclude this section by providing some side results on varieties generated by regular Heyting algebras. It is in fact natural to consider what is the intermediate logic of all regular Heyting algebras. As a matter of fact, it was proven already in \cite[Cor. 5.2.3]{Ciardelli.2009} that  $S(\ipc^\neg)=\ipc$, which by \cite[Prop. 4.17]{bezhanishvili_grilletti_quadrellaro_2022} means that the variety generated by regular Heyting algebras is the whole variety of Heyting algebras. 

One could then wonder if, by looking at suitable subclasses of regular Heyting algebras, one can obtain proper subvarieties of Heyting algebras. For example one could  consider, for all $n<\omega$, the varieties generated by those Heyting algebras which are stable  under $\eqrel_m$ for $m\geq n$ but not under $\eqrel_m$ for $m<n$. In fact, by the next proposition, we have that the sequence of equivalence relations $\eqrel_n$ does not in general converge to any finite value.

\begin{proposition}\label{dna-rieger}
	There is an Esakia space $\fE$ such that each $\fE/\eqrel_{n}$ is distinct, for each $n<\omega$.
\end{proposition}
\begin{proof}
	Consider the poset $\fR_1$ in \cref{rieger-nishimura} (which adapts \cite[Fig. 4.1]{Quadrellaro.2019}) and provide it with the topology induced by the following subbasis
	\[ \{ x^\uparrow \mid  x\in \fR_1 \} \cup \{(x^\uparrow)^c \mid x\in \fR_1    \}.  \]
	It can be  verified that the resulting space is an Esakia space and, moreover, that  for every $n<\omega$, $a_n\neqrel_{n+1}a_{n+1}$. Therefore, for every $n<m<\omega$ we have that $\fR_1/\eqrel_{n}$ and $\fR_1/\eqrel_{m}$ are distinct. \qedhere
\end{proof}

\begin{figure}
	\begin{tikzpicture}[scale=0.25]
	\pgfmathsetmacro{\NODESIZE}{1.5pt}
	
	-- draw left column of nodes
	\foreach \y in {0,...,6}{
		\node[draw,circle, inner sep=\NODESIZE,fill] (leftnode \y) at (0,2*\y) {};
	}
	-- add two invisible nodes for the descending line at infinity
	\node[draw=none] (leftend1) at  (0,-1) {};
	\node[draw=none] (leftend2) at  (0,-3) {};
	
	-- draw right column of nodes
	\foreach \y in {0,...,6}{
		\node[draw,circle, inner sep=\NODESIZE,fill] (rightnode \y) at (5,2*\y) {};
	}
	\node[draw=none] (rightend1) at (5,-1) {};
	\node[draw=none] (rightend2) at (5,-3) {};
	
	-- draw the ladder itself
	\foreach \y in {0,...,4}{
		\pgfmathparse{\y+2}
		\draw [-] (leftnode \y) to (rightnode \pgfmathresult);
	}
	\foreach \y in {0,...,5}{
		\pgfmathparse{\y+1}
		\draw [-] (rightnode \y) to (leftnode \pgfmathresult);
	}
	
	\node[draw,circle, inner sep=\NODESIZE,fill,yshift=0em] (end) at  (2.5,-4) {};
	
	--construct the vertical bars and descend into infinity
	\draw [-] (leftnode 6) to (leftend1);
	\draw [loosely dashed] (leftend1) to (leftend2);
	\draw [-] (rightnode 6) to (rightend1);
	\draw [loosely dashed] (rightend1) to (rightend2);

	-- label the fuckers    
	\node [left] at (leftnode 6.west)  {$a_0$};
	\node [left] at (leftnode 5.west)  {$a_1$ };
	\node [left] at (leftnode 4.west)  {$a_2 $};
	\node [left] at (leftnode 3.west)  {$a_3$};
	\node [right] at (rightnode 6.east) {$b_0$};
	\node [right] at (rightnode 5.east)  {$b_1$};
	\node [right] at (rightnode 4.east)  {$b_2$};
	\node [right] at (rightnode 3.east)  {$b_3$};

	\node [right] at (end.east)  {$r$};
	-- caption trick
	\node[draw=none] (label) at  (2,-7) {};
	\node [] at (label.south)  {$\fR_0$};
	\end{tikzpicture} 
	\hspace{0.5cm}
	\begin{tikzpicture}[scale=0.25]
	\pgfmathsetmacro{\NODESIZE}{1.5pt}
	
	-- draw left column of nodes
	\foreach \y in {0,...,6}{
		\node[draw,circle, inner sep=\NODESIZE,fill] (leftnode \y) at (0,2*\y) {};
	}
	-- add two invisible nodes for the descending line at infinity
	\node[draw=none] (leftend1) at  (0,-1) {};
	\node[draw=none] (leftend2) at  (0,-3) {};
	
	-- draw right column of nodes
	\foreach \y in {0,...,6}{
		\node[draw,circle, inner sep=\NODESIZE,fill] (rightnode \y) at (5,2*\y) {};
	}
	\node[draw=none] (rightend1) at (5,-1) {};
	\node[draw=none] (rightend2) at (5,-3) {};
	
	-- draw the ladder itself
	\foreach \y in {0,...,4}{
		\pgfmathparse{\y+2}
		\draw [-] (leftnode \y) to (rightnode \pgfmathresult);
	}
	\foreach \y in {0,...,5}{
		\pgfmathparse{\y+1}
		\draw [-] (rightnode \y) to (leftnode \pgfmathresult);
	}
	
	--construct the vertical bars and descend into infinity
	\draw [-] (leftnode 6) to (leftend1);
	\draw [loosely dashed] (leftend1) to (leftend2);
	\draw [-] (rightnode 6) to (rightend1);
	\draw [loosely dashed] (rightend1) to (rightend2);
	
	--add the branching to the left node
	\node[draw,circle, inner sep=\NODESIZE,fill] (leftmost) at (-4,12) {};
	\draw [] (leftnode 5) to (leftmost);
	
	\node[draw,circle, inner sep=\NODESIZE,fill,yshift=0em] (end) at  (2.5,-4) {};
	
	-- label the fuckers    
	\node [left] at (leftnode 6.west)  {$a_0$};
	\node [left] at (leftnode 5.west)  {$a_1$ };
	\node [left] at (leftnode 4.west)  {$a_2 $};
	\node [left] at (leftnode 3.west)  {$a_3$};
	\node [left] at (leftmost.west)  {$c_0$};
	\node [right] at (rightnode 6.east) {$b_0$};
	\node [right] at (rightnode 5.east)  {$b_1$};
	\node [right] at (rightnode 4.east)  {$b_2$};
	\node [right] at (rightnode 3.east)  {$b_3$};
	
	\node [right] at (end.east)  {$r$};
	
	-- caption trick
	\node[draw=none] (label) at  (2,-7) {};
	\node [] at (label.south)  {$\fR_1$};
	\end{tikzpicture} 
	\hspace{0.5cm}
	\begin{tikzpicture}[scale=0.25]
	\pgfmathsetmacro{\NODESIZE}{1.5pt}
	
	-- draw left column of nodes
	\foreach \y in {0,...,6}{
		\node[draw,circle, inner sep=\NODESIZE,fill,yshift=0em] (leftnode \y) at (0,2*\y) {};
	}
	-- add two invisible nodes for the descending line at infinity
	\node[draw=none] (leftend1) at  (0,-1) {};
	\node[draw=none] (leftend2) at  (0,-3) {};
	
	-- draw right column of nodes
	\foreach \y in {0,...,6}{
		\node[draw,circle, inner sep=\NODESIZE,fill] (rightnode \y) at (5,2*\y) {};
	}
	\node[draw=none] (rightend1) at (5,-1) {};
	\node[draw=none] (rightend2) at (5,-3) {};
	
	-- draw 2right column of nodes
	\foreach \y in {1,...,6}{
		\node[draw,circle, inner sep=\NODESIZE,fill] (2rightnode \y) at (8,2*\y) {};
	}
	\node[draw=none] (2rightend1) at (8,-1) {};
	\node[draw=none] (2rightend2) at (8,-3) {};

	-- draw center column of nodes
	\foreach \y in {1,...,6}{
		\node[draw,circle, inner sep=\NODESIZE,fill] (centernode \y) at (-4,2*\y) {};
	}
	\node[draw=none] (centerend1) at (-4,-1) {};
	\node[draw=none] (centerend2) at (-4,-3) {};

	-- draw the ladder itself
	\foreach \y in {0,...,4}{
		\pgfmathparse{\y+2}
		\draw [-] (leftnode \y) to (rightnode \pgfmathresult);
	}
	\foreach \y in {0,...,5}{
		\pgfmathparse{\y+1}
		\draw [-] (rightnode \y) to (leftnode \pgfmathresult);
	}
	\foreach \y in {0,...,5}{
		\pgfmathparse{\y+1}
		\draw [-] (leftnode \y) to (centernode \pgfmathresult);
	}
	
	\foreach \y in {0,...,5}{
		\pgfmathparse{\y+1}
		\draw [-] (rightnode \y) to (2rightnode \pgfmathresult);
	}
	
	--construct the vertical bars and descend into infinity
	\draw [-] (leftnode 6) to (leftend1);
	\draw [loosely dashed] (leftend1) to (leftend2);
	\draw [-] (rightnode 6) to (rightend1);
	\draw [loosely dashed] (rightend1) to (rightend2);
	\draw [loosely dashed] (centerend1) to (centerend2);
	\draw [loosely dashed] (2rightend1) to (2rightend2);
	
	-- label the fuckers    
	\node [left] at (leftnode 6.west)  {$a_0$};
	\node [left] at (leftnode 5.west)  {$a_1$ };
	\node [left] at (leftnode 4.west)  {$a_2 $};
	\node [left] at (leftnode 3.west)  {$a_3$};
	\node [right] at (rightnode 6.east) {$b_0$};
	\node [right] at (rightnode 5.east)  {$b_1$};
	\node [right] at (rightnode 4.east)  {$b_2$};
	\node [right] at (rightnode 3.east)  {$b_3$};
	\node [left] at (centernode 6.west) {$c_0$};
	\node [left] at (centernode 5.west)  {$c_1$};
	\node [left] at (centernode 4.west)  {$c_2$};
	\node [left] at (centernode 3.west)  {$c_3$};
	\node [right] at (2rightnode 6.east) {$d_0$};
	\node [right] at (2rightnode 5.east)  {$d_1$};
	\node [right] at (2rightnode 4.east)  {$d_2$};
	\node [right] at (2rightnode 3.east)  {$d_3$};
	-- caption trick
	\node[draw=none] (label) at  (2,-7) {};
	\node [] at (label.south)  {$\fR_2$};
	
	\node[draw,circle, inner sep=\NODESIZE,fill,yshift=0em] (end) at  (2.5,-4) {};

	\node [right] at (end.east)  {$r$};
	\end{tikzpicture}\caption{}	
	\label{rieger-nishimura}
\end{figure}

We then introduce the following definition.

\begin{definition} $\;$
	\begin{enumerate}
		\item Let $\fE$ be an Esakia space, we say that it is \emph{strongly regular} if for all $x,y\in \fE$ we have that $x\neqrel_{0}y$, i.e. $M(x)\neq M(y)$. 
		\item We say that a Heyting algebra $H$ is \emph{strongly regular} if its dual Esakia space $\fE_H$ is strongly regular.
	\end{enumerate}
\end{definition}

\noindent 	For example, if we provide the poset $\fR_2$ with a topology analogous to the one we assigned to $\fR_1$ in \cref{dna-rieger}, we see that $\fR_2$ makes for a strongly regular Esakia space. Moreover, the map $p:\fR_2\to \fR_0$ defined by letting, for each $i<\omega$, $p(a_i)=a_i$, $p(b_i)=b_i$, $p(c_i)=a_0$, $p(d_i)=b_0$ and $p(r)=r$ is a p-morphism from a strongly regular Esakia space onto the dual of the Rieger-Nishimura lattice. Working on this idea we can strengthen the aforementioned result and establish that the variety generated by   strongly regular Heyting algebras  is the variety of all Heyting algebras. We start by defining the strong regularisation of a finite poset.

\begin{definition}
	Let $\fF$ be a finite poset, the \textit{strong regularisation} of $\fF$ is the poset $\fF^*$ obtained by adding, for each element $x\in \fF$, a new maximal element $x^*$ such that $(x^*)^\downarrow=x^\downarrow\cup \{x^*\}$.
\end{definition}

\noindent It is then possible to prove that every finite poset is a p-morphic image of its strong regularisation, hence showing that strongly regular Heyting algebras generate the whole variety of Heyting algebras.

\begin{theorem}
	The variety generated by strongly regular Heyting algebras is $\HA$.
\end{theorem}
\begin{proof}
	Since $\HA$ has the finite model property, it follows that if $\HA\nvDash \phi$ there is a finite Heyting algebra $H$ such that $H\nvDash\phi$. Now, if $\fF\twoheadrightarrow \fE_H $, it follows by duality that $H\preceq H_{\fF}$. So, since  the validity of formulas is preserved by the variety operations, we obtain that $H_{\fF}\nvDash\phi$. It is thus sufficient to show that every  finite poset is a p-morphic image of a finite strongly regular poset. 
	
	To this end, let $\fF$ be an arbitrary finite poset and $\fF^*$ be its strong regularisation. Clearly $\fF^*$ is also finite. Let $p:\fF^*\to\fF$ be defined by letting $p(x)=x$ for all $x\in \fF$, and $p(x^*)\in M(x)$ for all $x^*\in \fF^*\setminus\fF$, i.e. $p$ assigns each $x^*$ to some maximal elements that it “chooses” from $M(x)$. We check that $p$ is a p-morphism.
	\begin{itemize}
		\item[(i)] \textit{Forth Condition:} If $x\leq y$ for $x,y\in \fF$ then obviously $p(x)\leq p(y)$. If $x\leq y^*$ then $x\leq y$ and thus	$p(x)\leq p(y)\leq p(y^*)$.
		
		\item[(ii)] \textit{Back Condition:} If $p(x^*)\leq y$ then since $p(x^*)$ is maximal we immediately have $y=p(x^*)$, satisfying the condition. Otherwise, if $p(x)\leq y$  and $x\in \fF$, then we have that $p(x)\leq y=p(y)$ and by definition of $p$ also that $x\leq y$.
	\end{itemize}
	\noindent This shows that $p:\fF^*\to \fF$ is a p-morphism, which completes our proof.		
\end{proof}

\section{Cardinality of $\Lambda(\HA^{\uparrow})$} \label{cardinality.sublattice}

In this section we apply the characterisation of regular posets of \cref{sec.characterisation} to show that the lattice of $\dna$-varieties $\Lambda(\HA^{\uparrow})$  and the lattice of $\dna$-logics $\Lambda(\ipc^\neg)$  have power continuum. This solves a question raised in \cite{Quadrellaro.2019,bezhanishvili_grilletti_quadrellaro_2022} and complements the previous result that the sublattice of $\dna$-logics extending $\inqB$ is dually isomorphic to $\omega+1$. Inquisitive logic has thus a special location in the lattice of negative variants, having only countably many extensions. 


\subsection{Jankov's Formulae}\label{jankov_formulas} To prove the uncountability of  $\Lambda(\HA^{\uparrow})$ we adapt to our setting the notion and the method of Jankov's formulae. 	Jankov's formulae were introduced in \cite{Jankov.1963, Jankov.1968} in order to show that the lattice of intermediate logic has the cardinality of the continuum. We recall how to adapt Jankov's formulae to the setting of $\dna$-logics and $\dna$-varieties  \cite{bezhanishvili_grilletti_quadrellaro_2022}. We write $\HArfsi$ for the class of all regular, finite, subdirectly irreducible Heyting algebras.

\begin{definition}
	Let $H\in \HArfsi $, let $0$ be the least element of $H$ and $s$ its second greatest element.
	\begin{itemize}
		\item The \textit{Jankov representative} of $x\in H$ is a formula $\psi_x$ defined as follows:
		\begin{itemize}
			\item[(i)] If $x\in H_\neg$, then $\psi_x=p_x$, where $p_x\in\at$;
			
			\item[(ii)] If $x=\delta_H(a_0,...,a_n)$ with $a_0,...,a_n\in H_\neg$, then $\psi_x=\delta(p_{a_0},...,p_{a_n})$.
		\end{itemize}
		
		\item  The \textit{Jankov $\dna$-formula} $\chi^{\dna}(H)$ is defined as follows:	
		$$\chi^{\dna}(H):= \alpha \rightarrow \psi_s,$$
		\noindent where $\alpha$ is the following formula:
		\begin{align*}
		\alpha= (\psi_0\leftrightarrow \bot) \; \land \; &\bigwedge \{(\psi_a\land \psi_b) \leftrightarrow \psi_{a\land b}\mid a,b\in H    \} \; \land \\
		&\bigwedge \{(\psi_a\lor \psi_b) \leftrightarrow \psi_{a\lor b}\mid a,b\in H  \} \; \land \\
		&\bigwedge \{(\psi_a\rightarrow \psi_b) \leftrightarrow \psi_{a\rightarrow b}\mid a,b\in H  \}.
		\end{align*}
	\end{itemize}
\end{definition}

\noindent As it is generally clear from the context that we are dealing with the $\dna$-version of Jankov's formulae, we write just $\chi(H)$ for the Jankov $\dna$-formula of $H$.   We recall the following result from \cite[Thm. 4.31]{bezhanishvili_grilletti_quadrellaro_2022}. For any $A,B\in \HA$, we write $ A\leq B $ if $A\in \mathbb{HS}(B) $.

\begin{theorem}\label{JT}
	Let $A\in \HArfsi$ and $B\in \HA$ then $B\nvDash^\neg \chi(A) \text{ iff } A\leq B.$
\end{theorem}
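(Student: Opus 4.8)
The plan is to adapt the classical construction of Jankov's characteristic formulae to the $\dna$-setting, the one new ingredient being that only the regular elements of $A$ receive their own propositional variable, so that the valuation witnessing the whole construction is automatically negative. The backbone is the observation that the map $\mu_A$ sending each variable $p_a$ (for $a\in A_\neg$) to $a$ is a \emph{negative} valuation on $A$ for which $\sem{\psi_x}^{A,\mu_A}=x$ holds for every $x\in A$: this is immediate for regular $x$ and follows for $x=\delta_A(a_0,\dots,a_n)$ from the definition $\psi_x=\delta(p_{a_0},\dots,p_{a_n})$. Consequently $\sem{\alpha}^{A,\mu_A}=1$, since every conjunct of $\alpha$ becomes a true identity among the values $\sem{\psi_x}^{A,\mu_A}$, whereas $\sem{\psi_s}^{A,\mu_A}=s\neq 1$; hence $\sem{\chi(A)}^{A,\mu_A}=s\neq 1$ and already $A\nvDash^\neg\chi(A)$.

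For the direction $A\leq B\Rightarrow B\nvDash^\neg\chi(A)$, fix a subalgebra $C\preceq B$ and a surjection $g:C\twoheadrightarrow A$. Since homomorphisms commute with double negation, $g$ restricts to a surjection $C_\neg\twoheadrightarrow A_\neg$, so for each $a\in A_\neg$ we may choose a regular preimage $b_a\in C_\neg\subseteq B_\neg$ and define a negative valuation $\mu$ on $B$ by $\mu(p_a)=b_a$. Then $g(\mu(p_a))=a$, so $g$ transports $\mu$ to $\mu_A$ on the variables of $\chi(A)$, giving $g(\sem{\psi_x}^{B,\mu})=x$ for all $x$; in particular $g(\sem{\alpha}^{B,\mu})=1$ and $g(\sem{\psi_s}^{B,\mu})=s$. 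Were $\sem{\alpha}^{B,\mu}\leq\sem{\psi_s}^{B,\mu}$, applying $g$ would yield $1\leq s$, contradicting $s\neq 1$; hence $\sem{\chi(A)}^{B,\mu}\neq 1$ and $B\nvDash^\neg\chi(A)$. Note this direction does not require $\alpha$ to be valid on $B$: the homomorphism $g$ transports the refutation for free.

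For the converse, suppose a negative valuation $\mu$ on $B$ refutes $\chi(A)$, so $\sem{\alpha}^{B,\mu}\not\leq\sem{\psi_s}^{B,\mu}$. Let $C$ be the subalgebra of $B$ generated by the regular elements $\{\mu(p_a)\mid a\in A_\neg\}$; every $\sem{\psi_x}^{B,\mu}$ lies in $C$. By the prime filter theorem pick a prime filter $G$ of $B$ with $\sem{\alpha}^{B,\mu}\in G$ and $\sem{\psi_s}^{B,\mu}\notin G$, let $\theta$ be the congruence on $C$ induced by the prime filter $G\cap C$, and let $q:C\to C/\theta$ be the quotient map. Writing $\bar v(x):=\sem{\psi_x}^{B,\mu}$, I would then show that $\Phi:=q\circ\bar v:A\to C/\theta$ is an isomorphism. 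It is a homomorphism because each conjunct of $\alpha$ lies in $G$, which forces $q(\sem{\psi_a\ast\psi_b}^{B,\mu})=q(\sem{\psi_{a\ast b}}^{B,\mu})$ for $\ast\in\{\land,\lor,\to\}$ and $q(\bar v(0))=0$ (preservation of $\top$ then follows from $1=0\to 0$). It is surjective because its image is a subalgebra containing all the generators $q(\mu(p_a))=\Phi(a)$ of $C/\theta$. And it is injective because $A$ is subdirectly irreducible: a nontrivial kernel would contain the monolith, hence identify $s$ with $1$, contradicting $\Phi(s)=q(\sem{\psi_s}^{B,\mu})\neq 1$, which holds since $\sem{\psi_s}^{B,\mu}\notin G$. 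Thus $A\cong C/\theta\in\mathbb{HS}(B)$, i.e. $A\leq B$.

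The main obstacle is this converse direction, and within it the injectivity of $\Phi$: it is exactly here that the hypothesis $A\in\HArfsi$ (providing a second greatest element $s$ and a monolith) and the separation $\sem{\psi_s}^{B,\mu}\notin G$ must conspire. The remaining work is bookkeeping that I would keep brief: checking that $G\cap C$ is prime in $C$, that $\Phi$ is well-defined independently of the chosen representatives $\psi_x$ (again via $\alpha\in G$), and that the valuations produced in both directions genuinely remain negative, so that the statement lives entirely within the $\dna$-semantics.
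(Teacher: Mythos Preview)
The paper does not actually prove this theorem: it merely recalls it as \cite[Thm.~4.31]{bezhanishvili_grilletti_quadrellaro_2022}, so there is no in-paper argument to compare against. Your proposal is a correct reconstruction of the classical Jankov argument, adapted to the $\dna$-setting in the expected way: the key adjustments---assigning variables only to the regular elements of $A$ (so that the canonical valuation $\mu_A$ and the lifted valuation on $B$ are automatically negative), and using the regularity of $A$ to guarantee that every $x\in A$ has a representative $\psi_x$---are exactly the ones carried out in the cited paper. The forward direction via a regular lift through $g:C\twoheadrightarrow A$, and the converse via a prime filter separating $\sem{\alpha}$ from $\sem{\psi_s}$, the induced quotient of the subalgebra $C$, and the monolith argument for injectivity, are all standard and sound; the small bookkeeping items you flag (primality of $G\cap C$ in $C$, independence of $\Phi$ from the choice of $\psi_x$, negativity of the valuations) are routine and resolve as you indicate.
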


\noindent The next proposition adapts to the context of $\dna$-logics Jankov's classical result on intermediate logics and Heyting algebras.

\begin{proposition}\label{antichain.jankov}
	Let $\cC$ be an $\leq$-antichain of finite, regular, subdirectly irreducible Heyting algebras, then for all $\mathcal{I},\mathcal{J}\subseteq \cC$ such that $\mathcal{I}\neq \mathcal{J}$ we have that $Log^\neg(\mathcal{I})\neq Log^\neg(\mathcal{J})$.
\end{proposition}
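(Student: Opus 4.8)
The plan is to use the Jankov $\dna$-formulae as separating formulae between the two logics, transcribing the classical Jankov argument for intermediate logics into the $\dna$-setting via \cref{JT}. Since $\mathcal{I}\neq\mathcal{J}$, by symmetry I may assume there is some $A\in\mathcal{I}\setminus\mathcal{J}$. I claim that the Jankov $\dna$-formula $\chi(A)$ witnesses the inequality, by showing $\chi(A)\notin Log^\neg(\mathcal{I})$ while $\chi(A)\in Log^\neg(\mathcal{J})$. Note that \cref{JT} is applicable to $A$ throughout, since $A\in\cC$ is by hypothesis a finite, regular, subdirectly irreducible Heyting algebra, hence $A\in\HArfsi$.

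First I would show $\chi(A)\notin Log^\neg(\mathcal{I})$. Since $A$ is trivially a homomorphic image of itself as a subalgebra of itself, we have $A\in\mathbb{HS}(A)$, i.e. $A\leq A$. Thus \cref{JT} (applied with the $RFSI$ algebra $A$ in the role of both arguments) yields $A\nvDash^\neg\chi(A)$. As $A\in\mathcal{I}$, the formula $\chi(A)$ fails $\dna$-validity on a member of $\mathcal{I}$, so $\chi(A)\notin Log^\neg(\mathcal{I})$.

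Second I would show $\chi(A)\in Log^\neg(\mathcal{J})$, that is, $B\vDash^\neg\chi(A)$ for every $B\in\mathcal{J}$. Fix $B\in\mathcal{J}$. Since $A\notin\mathcal{J}$ whereas $B\in\mathcal{J}$, we have $A\neq B$; and as $\cC$ is an $\leq$-antichain with $A,B\in\cC$ distinct, it follows that $A\not\leq B$. Applying \cref{JT} once more (now with the $RFSI$ algebra $A$ and the arbitrary Heyting algebra $B$), $A\not\leq B$ gives $B\vDash^\neg\chi(A)$. Since $B\in\mathcal{J}$ was arbitrary, $\chi(A)\in Log^\neg(\mathcal{J})$.

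Combining the two points produces a formula $\chi(A)\in Log^\neg(\mathcal{J})\setminus Log^\neg(\mathcal{I})$, whence $Log^\neg(\mathcal{I})\neq Log^\neg(\mathcal{J})$, as required. The argument is essentially pure bookkeeping on top of \cref{JT}; the only points demanding care are verifying that the hypotheses of \cref{JT} hold (namely that each algebra in $\cC$ is genuinely $RFSI$, which is built into the assumption on $\cC$) and correctly deploying the antichain condition to secure $A\not\leq B$ for every $B\in\mathcal{J}$. I do not anticipate any substantial obstacle beyond these routine checks.
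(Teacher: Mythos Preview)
Your proposal is correct and follows essentially the same approach as the paper: pick $A\in\mathcal{I}\setminus\mathcal{J}$, use \cref{JT} with $A\leq A$ to get $\chi(A)\notin Log^\neg(\mathcal{I})$, and use the antichain condition together with \cref{JT} to get $\chi(A)\in Log^\neg(\mathcal{J})$. The only difference is that you spell out more explicitly why \cref{JT} applies and why $A\not\leq B$ for each $B\in\mathcal{J}$.
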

\begin{proof}
	Since $\mathcal{I}\neq \mathcal{J}$ there is without loss of generality some $H\in\mathcal{I}\setminus \mathcal{J}$. By Theorem $\ref{JT}$ it follows that $H\nvDash^\neg \chi(H)$, thus $\chi(H)\notin Log^\neg(\mathcal{I})$. Since $\cC$ is an antichain, $H\nleq K$ for all $K\in \mathcal{J}$, which by Theorem \ref{JT} gives $K\vDash^\neg \chi(H)$, whence $\chi(H)\in Log^\neg(\mathcal{J})$.
\end{proof}

To prove that the lattice of $\dna$-logics has power continuum it is thus sufficient  to exhibit an infinite antichain of finite, regular, subdirectly irreducible Heyting algebras. Perhaps surprisingly, we can use examples of antichains which are standard in the literature, as they turn out to consist of regularly generated algebras. 

\subsection{Antichain $\Delta_0$}\label{antichain0} We start by introducing the antichain $\Delta_0$---for this example see e.g. \cite[p. 71]{bezhanishvili2006lattices}. This is an  antichain of posets which are all regular, but which, as we shall see, contains for all $n<\omega$ infinitely many elements which are not stable under $\eqrel_n$. For every $n<\omega$ we define the poset $\fF_n$, with domain	
\[  \dom(\fF_n)=\{ r\} \cup \{a_m \mid   m\leq n \} \cup \{b_m \mid  m\leq n \} \cup \{c_m \mid  m\leq n \};  \]
\noindent and such that
\begin{itemize}
	\item $r\leq a_i, b_i, c_i \text{ for all } i\leq n;$
	\item $a_i\leq a_j, a_i\leq b_j \text{ and } c_i\leq c_j, c_i\leq b_j \text{ whenever } j\leq i;$
	\item $b_i\leq a_j \text{ and } b_i\leq c_j \text{ whenever } j\leq i.$
\end{itemize}
\begin{figure}
	\begin{center}
		\begin{tikzpicture}[scale=0.8]
		\node[inner sep=0pt] (0)    at ( -8,0) {$\bullet$};
		\node[inner sep=0pt] (00)  at (-9,1) {$\bullet$};
		\node[inner sep=0pt] (01)  at (-8,1) {$\bullet$};
		\node[inner sep=0pt] (10)  at (-7,1) {$\bullet$};
		
		\draw (0)  -- (00);
		\draw (0)  -- (10);
		\draw (0)  -- (01);
		
		\node[inner sep=0pt] (u)    at ( -4,0) {$\bullet$};
		
		\node[inner sep=0pt] (x0)  at (-5,1) {$\bullet$};
		\node[inner sep=0pt](x1) at ( -5,2) {$\bullet$};
		
		\node[inner sep=0pt](y0)  at (-4,1) {$\bullet$};
		\node[inner sep=0pt](y1) at ( -4,2) {$\bullet$};
		
		\node[inner sep=0pt](z0)  at (-3,1) {$\bullet$};
		\node[inner sep=0pt](z1) at ( -3,2) {$\bullet$};
		
		\draw (u)    -- (x0);
		\draw (u)    -- (y0);
		\draw (u)    -- (z0);
		
		\draw (x0)    -- (x1);
		\draw (z0)    -- (z1);
		\draw (x0)    -- (y1);
		\draw (y0)    -- (z1);
		\draw (y0)    -- (x1);
		\draw (z0)    -- (y1);

		
		\node[inner sep=0pt](p)    at ( 0,0) {$\bullet$};
		
		\node[inner sep=0pt](d0)  at (-1,1) {$\bullet$};
		\node[inner sep=0pt](d1) at ( -1,2) {$\bullet$};
		\node[inner sep=0pt](d2)  at (-1,3) {$\bullet$};
		
		\node[inner sep=0pt](e0)  at (0,1) {$\bullet$};
		\node[inner sep=0pt](e1) at ( 0,2) {$\bullet$};
		\node[inner sep=0pt](e2)  at (0,3) {$\bullet$};
		
		\node[inner sep=0pt](f0)  at (1,1) {$\bullet$};
		\node[inner sep=0pt](f1) at ( 1,2) {$\bullet$};
		\node[inner sep=0pt](f2)  at (1,3) {$\bullet$};
		
		\draw (p)    -- (d0);
		\draw (p)    -- (e0);
		\draw (p)    -- (f0);
		
		\draw (d0)    -- (d1);
		\draw (d1)    -- (d2);
		
		
		\draw (f0)    -- (f1);
		\draw (f1)    -- (f2);
		
		\draw (e0)    -- (d1);
		\draw (e1)    -- (d2);
		
		\draw (e0)    -- (f1);
		\draw (e1)    -- (f2);
		
		\draw (d0)    -- (e1);
		\draw (d1)    -- (e2);
		
		\draw (f0)    -- (e1);
		\draw (f1)    -- (e2);
		
		
		\node[inner sep=0pt](p)    at ( 4,0) {$\bullet$};
		
		\node[inner sep=0pt](d0)  at (3,1) {$\bullet$};
		\node[inner sep=0pt](d1) at ( 3,2) {$\bullet$};
		\node[inner sep=0pt](d2)  at (3,3) {$\bullet$};
		\node[inner sep=0pt](d3)  at ( 3,4) {$\bullet$};
		
		\node[inner sep=0pt](e0)  at (4,1) {$\bullet$};
		\node[inner sep=0pt](e1) at ( 4,2) {$\bullet$};
		\node[inner sep=0pt](e2)  at (4,3) {$\bullet$};
		\node[inner sep=0pt](e3)  at (4,4) {$\bullet$};
		
		\node[inner sep=0pt](f0)  at (5,1) {$\bullet$};
		\node[inner sep=0pt](f1) at ( 5,2) {$\bullet$};
		\node[inner sep=0pt](f2)  at (5,3) {$\bullet$};
		\node[inner sep=0pt](f3)  at (5,4) {$\bullet$};
		
		\draw (p)    -- (d0);
		\draw (p)    -- (e0);
		\draw (p)    -- (f0);
		
		\draw (d0)    -- (d2);
		\draw (d1)    -- (d2);
		\draw (d2)    -- (d3);
		
		\draw (f0)    -- (f1);
		\draw (f1)    -- (f2);
		\draw (f2)    -- (f3);
		
		\draw (e0)    -- (d1);
		\draw (e1)    -- (d2);
		\draw (e2)    -- (d3);
		
		\draw (e0)    -- (f1);
		\draw (e1)    -- (f2);
		\draw (e2)    -- (f3);

		\draw (d0)    -- (e1);
		\draw (d1)    -- (e2);
		\draw (d2)    -- (e3);
		
		\draw (f0)    -- (e1);
		\draw (f1)    -- (e2);
		\draw (f2)    -- (e3);
		\end{tikzpicture}
		\caption{The Antichain $\Delta_0$}	
		\label{Delta0}
		
	\end{center}
\end{figure}
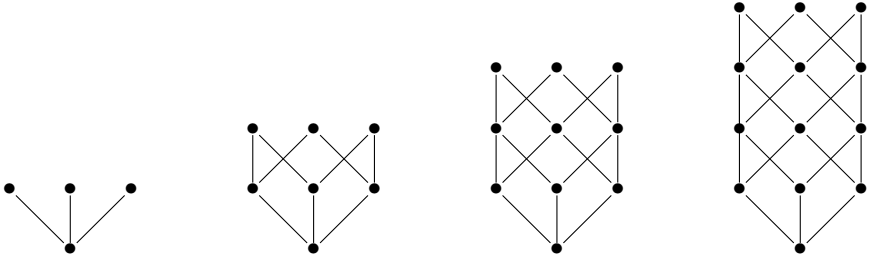

\noindent We let $\Delta_0:=\{ \fF_n \mid  n<\omega  \}$ be the set of all such posets. The following result follows by noticing that we cannot perform neither $\alpha$ nor $\beta$-reductions on any $\fF_n$ without collapsing the maximal elements---we refer the reader to \cite[Lem. 3.4.19]{bezhanishvili2006lattices} for a proof of this fact.

\begin{proposition}\label{antichain.1}
	The set of Heyting algebras dual to $\Delta_0$ is a $\leq$-antichain. 
\end{proposition}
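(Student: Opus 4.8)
The plan is to recast $\leq$ combinatorially through finite Esakia duality and then to exploit the rigidity of the frames $\fF_n$ guaranteed by \cref{characterisation1}. For finite Heyting algebras, $H_{\fF_n}\leq H_{\fF_m}$ (that is, $H_{\fF_n}\in\mathbb{HS}(H_{\fF_m})$) means, dualising $\mathbb{S}$ to surjective p-morphisms and $\mathbb{H}$ to generated subframes, that $\fF_n$ is isomorphic to a generated subframe (an upset) of some p-morphic image of $\fF_m$; so it suffices to rule this out whenever $n\neq m$. First I would record three features of each $\fF_n$ that are read off its defining relations, and visible in \cref{Delta0}: it has exactly three maximal points $a_0,b_0,c_0$; its root $r$ is the unique point with $|S(r)|=3$, every other non-maximal point $x$ satisfying $|S(x)|=2$; and it meets conditions (i) and (ii) of \cref{characterisation1}, namely $|S(x)|\geq 2$ for non-maximal $x$ and $S(x)\neq S(y)$ for distinct non-maximal $x,y$.

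The engine of the proof is the remark, established inside the proof of \cref{characterisation1}, that a finite poset satisfying (i) and (ii) admits no $\alpha$-reduction and no $\beta$-reduction other than those identifying two maximal points. Applying this to $\fF_m$ and decomposing an arbitrary surjective p-morphism into $\alpha$- and $\beta$-reductions \cite[Prop. 3.1.7]{bezhanishvili2006lattices}, I would conclude that \emph{every p-morphic image of $\fF_m$ other than $\fF_m$ itself has at most two maximal points}: the first reduction in any nontrivial decomposition must already merge two of the three maximal points of $\fF_m$, and the number of maximal points cannot increase thereafter.

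Now suppose $\fF_n\leq\fF_m$, so that $\fF_n$ is an upset of a p-morphic image $\fK$ of $\fF_m$. Since the maximal points of an upset of $\fK$ are maximal in $\fK$, and $\fF_n$ has three maximal points, $\fK$ has at least three maximal points; as $\fK$ is a p-morphic image of the three-maximal poset $\fF_m$ it has at most three; hence it has exactly three, and by the previous paragraph $\fK=\fF_m$. Thus $\fF_n$ is itself isomorphic to a generated subframe of $\fF_m$ with three maximal points. Being isomorphic to the rooted poset $\fF_n$, this subframe is a principal upset $\rho^{\uparrow}$ whose generator $\rho$ satisfies $|S(\rho)|=3$; but $r$ is the only point of $\fF_m$ with three immediate successors, so $\rho=r$ and $\rho^{\uparrow}=\fF_m$. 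Comparing sizes, $3n+4=|\fF_n|=|\fF_m|=3m+4$ forces $n=m$, the desired contradiction.

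I expect the main obstacle to be organisational rather than conceptual: one must verify, directly from the defining relations rather than from the pictures, the two structural invariants of $\fF_m$ used above --- that it has exactly three maximal points and that its root is its only point with three immediate successors --- and one must track maximal points carefully through the reduction decomposition in order to be sure that no maximal-preserving reduction can occur. Once these bookkeeping points are in place the argument closes, and it is exactly the content of \cite[Lem. 3.4.19]{bezhanishvili2006lattices} recast through \cref{characterisation1}.
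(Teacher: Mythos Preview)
Your proposal is correct and follows essentially the same line as the paper. The paper does not spell out a proof but simply observes that no $\alpha$- or $\beta$-reduction can be applied to any $\fF_m$ without collapsing maximal elements, deferring to \cite[Lem.~3.4.19]{bezhanishvili2006lattices}; your argument is exactly an explicit unfolding of that observation through \cref{characterisation1}, tracking the three maximal points through the reduction decomposition and then using the uniqueness of the root as the only point with three immediate successors to rule out proper generated subframes.
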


\noindent Since every poset in $\Delta_0$ is finite and rooted, it follows immediately by Esakia duality that its algebraic duals are finite, subdirectly irreducible, Heyting algebras. In order to establish our result we  also need to make sure that every Heyting algebra which we are dealing with is regularly generated. This follows from the characterisation of finite regular posets which we provided in  \cref{sec.characterisation}. We recall that, if $\fF$ is a finite poset, then the \textit{depth} of an element $x\in \fF$, written $\depth(x)$, is defined as the size of the maximal chain in $x^\uparrow\setminus\{x\}$. Clearly the depth of a maximal elements is 0 and, in each $\fF_n$ the depth of the root is $n+1$.

\begin{proposition}\label{antichain.regularity}
	For every $n<\omega$, the poset $\fF_n$ is regular. In particular,  $\fF_n/\eqrel_n=\fF_n$ for every $n<\omega$.
\end{proposition}
\begin{proof}
	Consider $\fF_n$ for some $n<\omega$, we prove by induction on $\depth(x)$ that $[x]_k = \{x \}  $ whenever $k\geq \depth(x)-1$, for $\depth(x)>0$, and $k\geq 0$ otherwise.
	\begin{itemize}
		\item Let $\depth(x)\leq 1$. Without loss of generality we let $x=a_{1}$. Then, given any $y\in \fF_n$ such that $y\neq a_1$, we clearly have $M(a_1)\neq M(y)$, showing $[a_1]_0=\{a_1\}$.
		
		\item Let $\depth(x)=m+1< n+1$.  Without loss of generality we let $x=a_{m+1}$ and by induction hypothesis $[a_{l}]_{k}=\{a_{l}\}$, $[b_{l}]_{k}=\{b_{l}\}$ and $[c_{l}]_{k}=\{c_{l}\}$ whenever $l \leq m$, $k\geq l-1$. Now, for all $y\in \fF_n$ such that $x\neq y$, if $\depth(y)\leq m$ then $[y]_{m}=\{y\}$, thus  $[y]_{k}=\{y\}$  for all $k\geq m$. Otherwise, if $\depth(y)>m$ then since $y\neq a_{m+1}$, it follows $y\leq c_{m}$. Since $[c_m]_{m-1}=\{c_m\}$, this proves $x\neqrel_{m}y$ and thus by \cref{lemma_inductive} $x\neqrel_{k}y$ for all $k\geq m$. It follows $[x]_{k} =\{x\}$ for all $k\geq m=\depth(x)-1$.
		
		\item Let $\depth(x)=n+1$. The only point with depth $n+1$ in $\fF_n$ is the root $r$ and clearly it is the only point in $[r]_{n}$.
	\end{itemize}
	\noindent It follows that  $[x]_n=\{x\}$ for all $x\in \fF_n$ and thus $\fF_n/\eqrel_n =\fF_n$.
\end{proof}


Once we know that every poset from the antichain $\Delta_0$ above is regular, it is then straightforward to reason as in Jankov's original proof and show that the cardinality of the lattices of $\dna$-logics and $\dna$-varieties is exactly $2^{\aleph_0}$. We say that a finite Heyting algebra has \emph{width} (or \emph{depth}) $n$ if its dual poset has width (or depth) $n$,

\begin{theorem}\label{continuum.logics}
	There are continuum-many $\dna$-logics and $\dna$-varieties. In particular, there are continuum-many $\dna$-varieties generated by Heyting algebras of width 3.
\end{theorem}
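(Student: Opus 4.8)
The plan is to derive the statement from the antichain $\Delta_0$ together with the Jankov-formula machinery of \cref{jankov_formulas}; all the substantive work has in fact already been done, so the argument is essentially a combination of the preceding results. First I would collect the hypotheses needed to invoke \cref{antichain.jankov} for the family $\cC := \{ H_{\fF_n} \mid n<\omega \}$ of Heyting algebras dual to the posets of $\Delta_0$. Each $\fF_n$ is finite and rooted, so by Esakia duality each $H_{\fF_n}$ is a finite, subdirectly irreducible Heyting algebra; by \cref{antichain.regularity} each $\fF_n$ is regular, whence each $H_{\fF_n}$ is regularly generated; and by \cref{antichain.1} the family $\cC$ is an $\leq$-antichain. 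Finally $\cC$ is countably infinite, as the $\fF_n$ have pairwise distinct cardinalities $3(n+1)+1$.

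Having checked these, I would apply \cref{antichain.jankov} to conclude that $\mathcal{I} \mapsto Log^{\neg}(\mathcal{I})$ is injective on $\wp(\cC)$, since $Log^{\neg}(\mathcal{I}) \neq Log^{\neg}(\mathcal{J})$ whenever $\mathcal{I} \neq \mathcal{J}$. As $|\wp(\cC)| = 2^{\aleph_0}$, this exhibits at least continuum-many distinct $\dna$-logics; and since every $\dna$-logic is a set of formulas of the countable language $\langInt$, there are at most $2^{\aleph_0}$ of them, giving exactly $2^{\aleph_0}$. Transporting this injection along the dual isomorphism $\Lambda(\ipc^{\neg}) \cong^{op} \Lambda(\HA^{\uparrow})$ recalled earlier, the assignment $\mathcal{I} \mapsto Var^{\neg}(Log^{\neg}(\mathcal{I})) = \dvari(\mathcal{I})$ is likewise injective, so there are continuum-many $\dna$-varieties as well.

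For the refinement on width, I would verify that each $\fF_n$ has width exactly $3$: as is visible from \cref{Delta0}, apart from the root $r$ the poset splits into the three chains $a_n \leq \cdots \leq a_0$, $b_n \leq \cdots \leq b_0$ and $c_n \leq \cdots \leq c_0$, so any antichain meets each chain at most once and cannot contain $r$, bounding its size by $3$; the three maximal elements $a_0,b_0,c_0$ realise this bound. Hence every generator $H_{\fF_n}$ has width $3$, and each of the continuum-many $\dna$-varieties $\dvari(\mathcal{I})$ is generated by Heyting algebras of width $3$. There is no real obstacle left once \cref{antichain.1,antichain.regularity,antichain.jankov} are in place; the only points demanding a little care are securing the cardinality \emph{upper} bound (to pin the count at $2^{\aleph_0}$ rather than merely bound it below) and confirming that the columns of $\fF_n$ are genuine chains so that the width is indeed $3$.
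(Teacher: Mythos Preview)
Your proposal is correct and follows essentially the same route as the paper: apply \cref{antichain.jankov} to the family of algebras dual to $\Delta_0$, after checking they are finite, subdirectly irreducible and regular via \cref{antichain.1} and \cref{antichain.regularity}, and then pass to $\dna$-varieties by duality. Your additions (the explicit $2^{\aleph_0}$ upper bound and the chain-by-chain verification that $\width(\fF_n)=3$) are minor elaborations the paper leaves implicit.
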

\begin{proof}
	Let $\cA_0$ be the set of Heyting algebras dual to the posets in $\Delta_0$. Since every poset in $\Delta_0$ has width 3, the same holds for the dual Heyting algebras. By \cref{antichain.1}, $\cA_0$ is an infinite $\leq$-antichain of finite, subdirectly irreducible Heyting algebras. Moreover, by \cref{characterisation.regular} and \cref{antichain.regularity} we also have that each Heyting algebra in $\cA_0$ is regularly generated. By Theorem \ref{antichain.jankov} we have $Log^\neg(\mathcal{I})\neq Log^\neg(\mathcal{J})$ whenever $\mathcal{I},\mathcal{J}\subseteq \cA_0$ and $\mathcal{I}\neq \mathcal{J}$. By duality, we also have  $\dvari(\mathcal{I})\neq \dvari(\mathcal{J})$ whenever $\mathcal{I},\mathcal{J}\subseteq \cA_0$ and $\mathcal{I}\neq \mathcal{J}$, where  $\dvari(\class)$ denotes the $\dna$-variety generated by $\class$. Since $|\Delta_0|=\omega$, our result follows immediately.
\end{proof}

\noindent We also remark that, given the fact that $\dna$-varieties are in one-to-one correspondence with varieties of Heyting algebras generated by regular algebras, this also shows the existence of continuum-many varieties of Heyting algebras generated by regular elements. 

\subsection{Antichain $\Delta_1$}\label{antichain1}	Interestingly, we can also apply another standard example of infinite $\leq$-antichain to our context, originally due to Kuznetsov \cite{kuznetsov}, and show that there are continuum-many subvarieties of Heyting algebras which are generated by strongly regular elements.  

We recall the following construction and redirect the reader to \cite[\S 3]{bezhanishvili2022jankov} for more details. For every $1<n<\omega$, we let $\fG_n$ be the poset with domain
\[  \dom(\fG_n)=\{ r\} \cup \{a_m \mid m\leq n \} \cup \{b_m\mid m\leq n \}  \]
\noindent and such that
\begin{itemize}
	\item $r\leq a_i, b_i, \text{ for all } i\leq n;$
	\item $a_0\leq b_j,  \text{ for all } 0\leq j< n;$
	\item $a_n\leq b_j,  \text{ for all } 0<j\leq  n;$
	\item $a_i\leq b_j, \text{ for all } 0< i< n \text{ and } i\neq j.$
\end{itemize}

\noindent We let $\Delta_1:=\{ \fG_n\mid 1<n<\omega  \}$ be the set of all such posets. One can check that, whenever we collapse two maximal points in a frame $\fG_{n+1}$, the result is that every point of depth 1 is related to every point of depth 0, which is not the case in $\fG_n$. We thus obtain the following proposition, whose detailed proof is left to the reader.

\begin{proposition}\label{antichain.2}
	The set of Heyting algebras dual to $\Delta_1$ is a $\leq$-antichain. 
\end{proposition}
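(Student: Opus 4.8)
The plan is to push the statement through Esakia duality into a purely order-theoretic claim about p-morphisms and then exploit the combinatorics of the non-incidences between the middle and maximal layers of each $\fG_n$. Recall that $A\leq B$ means $A\in\mathbb{HS}(B)$, and that for finite algebras the operators $\mathbb{S}$ and $\mathbb{H}$ dualise to p-morphic images and generated subframes (closed upsets), respectively. Commuting these two operations (the preimage of an upset under a p-morphism is a generated subframe, and the restriction of a p-morphism to it is again a p-morphism) yields that, for $A$ finite and subdirectly irreducible, $A\leq B$ holds iff $\fE_A$ is a p-morphic image of a generated subframe of $\fE_B$; and since $\fE_A$ is rooted we may take this subframe to be rooted, i.e.\ of the form $x^\uparrow$. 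Applying this with $A=H_{\fG_m}$ and $B=H_{\fG_n}$ (each $\fG_n$ being finite and rooted), I would first note that every $\fG_n$ has depth $2$, whereas its only rooted generated subframes are $\fG_n$ itself ($r^\uparrow$), the depth-$1$ fans $a_i^\uparrow$, and the singletons $b_j^\uparrow$. As $\fG_m$ has depth $2$, the subframe must be $\fG_n$ itself, so $H_{\fG_m}\leq H_{\fG_n}$ reduces to the assertion that \emph{$\fG_m$ is a p-morphic image of $\fG_n$}. Since p-morphic images never increase cardinality and $|\fG_n|=2n+3$, this forces $m\leq n$; hence it suffices to rule out a surjective p-morphism $f\colon\fG_n\twoheadrightarrow\fG_m$ when $m<n$, the reverse inequality $\fG_n\leq\fG_m$ being excluded by the same cardinality bound.

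Second, I would record the behaviour of such an $f$ on the three layers. By the forth condition and surjectivity, $f$ sends the root of $\fG_n$ to the root of $\fG_m$; by the back condition it sends maximal points to maximal points; and, again by the back condition together with the fact that every point of $\fG_n$ lies below a maximal one, every maximal point of $\fG_m$ has a maximal preimage. Hence $f$ restricts to a surjection $f\restriction M(\fG_n)\colon M(\fG_n)\twoheadrightarrow M(\fG_m)$. Since $|M(\fG_n)|=n+1>m+1=|M(\fG_m)|$, this restriction cannot be injective, so there are two distinct maximal points $b\neq b'$ of $\fG_n$ with $f(b)=f(b')=:w'$.

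The punchline is to show that $w'$ would then be a maximal point of $\fG_m$ lying above \emph{every} middle point, which no frame in $\Delta_1$ admits. Indeed, let $v'$ be any middle (depth-$1$) point of $\fG_m$; a preimage $u\in f^{-1}(v')$ can be neither the root (which maps to the root) nor a maximal point (which maps to a maximal point), so $u$ is a middle point of $\fG_n$. By the defining incidences of $\fG_n$, each middle point fails to lie below \emph{exactly one} maximal point, so $u$ lies below at least one of the two distinct points $b,b'$; applying $f$ gives $v'=f(u)\leq w'$, and hence $w'$ dominates all middle points of $\fG_m$. But in every $\fG_m$ the non-incidences between middle and maximal points form a perfect matching (each maximal point $b_j$ misses precisely its partner $a_j$), so no maximal point dominates all middle points, a contradiction. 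I expect the main obstacle to be essentially bookkeeping: justifying the dual characterisation of $\leq$ (the commutation of generated subframes with p-morphic images) and the surjectivity of $f\restriction M(\fG_n)$, together with a careful reading of the incidence pattern that guarantees each middle point of $\fG_n$ misses exactly one maximal point. Once these are in place, the cardinality clash $n+1>m+1$ does all the real work.
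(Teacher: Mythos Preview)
Your argument is correct and essentially matches the approach hinted at in the paper (which leaves the details to the reader). The paper's one-line sketch is that collapsing two maximal points forces the collapsed point to dominate every depth-$1$ point, which never happens in any $\fG_m$; you make exactly this precise via the back condition and the pigeonhole on $|M(\fG_n)|=n+1>m+1$. Your reduction of $\leq$ to ``p-morphic image of a rooted generated subframe'' and the depth-$2$ observation ruling out proper subframes are the right scaffolding.

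One small inaccuracy worth fixing: the non-incidence pattern is not literally ``$b_j$ misses $a_j$''. From the definition, $a_0$ misses $b_n$, $a_n$ misses $b_0$, and $a_i$ misses $b_i$ for $0<i<n$. This is still a perfect matching between the middle and top layers, so your two uses of it---that each middle point of $\fG_n$ fails to lie below \emph{exactly one} maximal point, and that in $\fG_m$ every maximal point has some middle point not below it---remain valid. Just adjust the parenthetical description.
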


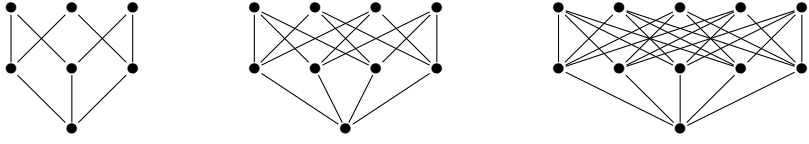
\begin{figure}
	\begin{center}
		\begin{tikzpicture}[scale=0.8]
		
		\node[inner sep=0pt](u)    at ( -5,0) {$\bullet$};
		
		\node[inner sep=0pt](x0)  at (-6,1) {$\bullet$};
		\node[inner sep=0pt](x1) at ( -6,2) {$\bullet$};
		
		\node[inner sep=0pt](y0)  at (-5,1) {$\bullet$};
		\node[inner sep=0pt](y1) at ( -5,2) {$\bullet$};
		
		\node[inner sep=0pt](z0)  at (-4,1) {$\bullet$};
		\node[inner sep=0pt](z1) at ( -4,2) {$\bullet$};
		
		\draw (u)    -- (x0);
		\draw (u)    -- (y0);
		\draw (u)    -- (z0);
		
		\draw (x0)    -- (x1);
		\draw (z0)    -- (z1);
		\draw (x0)    -- (y1);
		\draw (y0)    -- (z1);
		\draw (y0)    -- (x1);
		\draw (z0)    -- (y1);

		
		\node[inner sep=0pt](p)    at ( -0.5,0) {$\bullet$};
		
		\node[inner sep=0pt](d0)  at (-2,1) {$\bullet$};
		\node[inner sep=0pt](d1) at ( -1,1) {$\bullet$};
		\node[inner sep=0pt](d2)  at (0,1) {$\bullet$};
		\node[inner sep=0pt](d3)  at (1,1) {$\bullet$};
		
		\node[inner sep=0pt](e0)  at (-2,2) {$\bullet$};
		\node[inner sep=0pt](e1) at ( -1,2) {$\bullet$};
		\node[inner sep=0pt](e2)  at (0,2) {$\bullet$};
		\node[inner sep=0pt](e3)  at (1,2) {$\bullet$};
		
		\draw (p)    -- (d0);
		\draw (p)    -- (d1);
		\draw (p)    -- (d2);
		\draw (p)    -- (d3);
		
		\draw (d0)    -- (e0);
		\draw (d0)    -- (e1);
		\draw (d0)    -- (e2);
		
		\draw (d3)    -- (e3);
		\draw (d3)    -- (e1);
		\draw (d3)    -- (e2);
		
		\draw (d2)    -- (e3);
		\draw (d2)    -- (e1);
		\draw (d2)    -- (e0);
		
		\draw (d1)    -- (e3);
		\draw (d1)    -- (e2);
		\draw (d1)    -- (e0);
		
		\node[inner sep=0pt](m)    at ( 5,0) {$\bullet$};
		
		\node[inner sep=0pt](m0)  at (3,1) {$\bullet$};
		\node[inner sep=0pt](m1) at ( 4,1) {$\bullet$};
		\node[inner sep=0pt](m2)  at (5,1) {$\bullet$};
		\node[inner sep=0pt](m3)  at (6,1) {$\bullet$};
		\node[inner sep=0pt](m4)  at (7,1) {$\bullet$};
		
		\node[inner sep=0pt](n0)  at (3,2) {$\bullet$};
		\node[inner sep=0pt](n1) at ( 4,2) {$\bullet$};
		\node[inner sep=0pt](n2)  at (5,2) {$\bullet$};
		\node[inner sep=0pt](n3)  at (6,2) {$\bullet$};
		\node[inner sep=0pt](n4)  at (7,2) {$\bullet$};
		
		\draw (m)    -- (m0);
		\draw (m)    -- (m1);
		\draw (m)    -- (m2);
		\draw (m)    -- (m3);
		\draw (m)    -- (m4);
		
		\draw (m0)    -- (n0);
		\draw (m0)    -- (n1);
		\draw (m0)    -- (n2);
		\draw (m0)    -- (n3);
		
		\draw (m4)    -- (n4);
		\draw (m4)    -- (n1);
		\draw (m4)    -- (n2);
		\draw (m4)    -- (n3);
		
		\draw (m3)    -- (n0);
		\draw (m3)    -- (n1);
		\draw (m3)    -- (n2);
		\draw (m3)    -- (n4);

		\draw (m2)    -- (n3);
		\draw (m2)    -- (n1);
		\draw (m2)    -- (n0);
		\draw (m2)    -- (n4);
		
		\draw (m1)    -- (n3);
		\draw (m1)    -- (n2);
		\draw (m1)    -- (n0);
		\draw (m1)    -- (n4);
		\end{tikzpicture}
		\caption{The Antichain $\Delta_1$}	
		\label{Delta1}
	\end{center}
\end{figure}

\noindent As the posets in $\Delta_1$ grow in width rather than depth, we have that every $\fG_n$ is stable already under the quotient $\eqrel_0$, i.e. $\fG_n/\eqrel_0=\fG_n$ for all $1<n<\omega$. So every poset in $\Delta_1$ is actually strongly regular.

\begin{proposition}\label{antichain.regularity.2}
	Every poset $\fG_n$ is strongly regular.
\end{proposition}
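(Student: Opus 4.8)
The plan is to verify the definition of strong regularity directly: I will compute, for each $x\in\fG_n$, the set $M(x)$ of maximal elements lying above $x$, and check that the assignment $x\mapsto M(x)$ is injective, i.e.\ that $M(x)\neq M(y)$ whenever $x\neq y$. First I would note that the maximal elements of $\fG_n$ are precisely $b_0,\dots,b_n$: every $a_i$ lies below some $b_j$ and is thus non-maximal, while $r$ is the root. Hence $M(b_j)=\{b_j\}$ for each $j\leq n$, and $M(r)=\{b_0,\dots,b_n\}$ consists of all maximal elements.

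The core of the argument is to read off $M(a_i)$ from the three clauses defining the order. A direct inspection shows that each $a_i$ sits below every maximal element except exactly one: from $a_0\leq b_j$ for $0\leq j<n$ we get that $a_0$ omits only $b_n$; from $a_n\leq b_j$ for $0<j\leq n$ that $a_n$ omits only $b_0$; and from $a_i\leq b_j$ for $i\neq j$ (with $0<i<n$) that $a_i$ omits only $b_i$. The crucial observation is that the map sending each index $i$ to the index of the maximal element omitted by $a_i$ is the transposition swapping $0$ and $n$ and fixing every other index; in particular it is a bijection of $\{0,\dots,n\}$. Consequently the sets $M(a_i)$ are pairwise distinct co-singletons of $\{b_0,\dots,b_n\}$.

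It then remains to compare the three families of values. Since $2<n$, each $M(a_i)$ has $n\geq 3$ elements, so it coincides neither with any singleton $M(b_j)$ nor with the full set $M(r)$ (which has $n+1$ elements); the singletons $M(b_j)$ are mutually distinct; the co-singletons $M(a_i)$ are mutually distinct by the bijection just noted; and no $M(a_i)$ equals $M(r)$, since the former omits a maximal element. Thus $M(x)\neq M(y)$ for all distinct $x,y\in\fG_n$, which is exactly the assertion that $\fG_n$ is strongly regular. I expect no genuine obstacle here: the only delicate point is parsing the asymmetric clauses governing $a_0$ and $a_n$ correctly and recognising that the induced omission pattern is a bijection; once this is in place, the remaining case analysis is pure bookkeeping.
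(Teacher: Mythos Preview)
Your proposal is correct and follows exactly the approach the paper takes: the paper's proof is a one-line ``by construction it is straightforward to check that any two different $x,y\in\fG_n$ see different maximal elements,'' and your argument is precisely that straightforward check carried out in full, with the computation of each $M(a_i)$ as a co-singleton and the observation that the omission pattern is a bijection.
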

\begin{proof}
	By construction, it is straightforward to check that any two different $x,y\in \fG_n$ see different maximal elements, i.e. $M(x)\neq M(y)$.
\end{proof}

\noindent By the same reasoning as above, we immediately obtain a second uncountable family of $\dna$-varieties and $\dna$-logics.

\begin{theorem}
	There are continuum many $\dna$-varieties generated by strongly regular Heyting algebras of depth 3.
\end{theorem}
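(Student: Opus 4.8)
The plan is to reproduce, for the antichain $\Delta_1$, exactly the argument used in the proof of \cref{continuum.logics} for $\Delta_0$. Write $\cA_1$ for the set of Heyting algebras dual to the posets in $\Delta_1$. First I would record the structural properties of the members of $\cA_1$. Since every $\fG_n$ is finite and rooted, Esakia duality immediately yields that each dual algebra is finite and subdirectly irreducible. Moreover, by \cref{antichain.regularity.2} each $\fG_n$ is strongly regular, and since every strongly regular Esakia space is regular (as shown in \cref{strongly.regular.inquisitive}), each algebra in $\cA_1$ is regularly generated; in fact it is strongly regular. Finally, a direct inspection of $\fG_n$ shows that its maximal chains have length three: the maximal points $b_j$ have depth $0$, the $a_i$ have depth $1$, and the root $r$ has depth $2$, so every algebra in $\cA_1$ has depth $3$.

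Next I would invoke the antichain property. By \cref{antichain.2}, $\cA_1$ is an infinite $\leq$-antichain of finite, regular, subdirectly irreducible Heyting algebras, which is precisely the hypothesis of \cref{antichain.jankov}. Hence for any two distinct subsets $\mathcal{I},\mathcal{J}\subseteq\Delta_1$ we obtain $Log^\neg(\mathcal{I})\neq Log^\neg(\mathcal{J})$, and by the duality between $\dna$-logics and $\dna$-varieties also $Var^\neg(\mathcal{I})\neq Var^\neg(\mathcal{J})$. Since $|\Delta_1|=\omega$, the power set of $\Delta_1$ has cardinality $2^{\aleph_0}$, so $\mathcal{I}\mapsto Var^\neg(\mathcal{I})$ enumerates continuum many distinct $\dna$-varieties, each of which is generated by the strongly regular Heyting algebras of depth $3$ lying in $\mathcal{I}$. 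The same counting gives continuum many $\dna$-logics $Log^\neg(\mathcal{I})$, completing the argument.

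The only genuinely new ingredient relative to \cref{continuum.logics} is that the generating algebras are now strongly regular rather than merely regular; this is handed to us by \cref{antichain.regularity.2} together with the implication ``strongly regular $\Rightarrow$ regular'', so the Jankov machinery of \cref{antichain.jankov} applies verbatim. I therefore expect no serious obstacle, the proof being a routine transcription of the $\Delta_0$ argument. The one point meriting care is the antichain claim itself, namely that collapsing two maximal points of some $\fG_{n+1}$ forces every depth-$1$ point to lie below every maximal point, which fails in each $\fG_m$ with $m\le n$; but this is exactly the content of \cref{antichain.2}, which we are entitled to assume.
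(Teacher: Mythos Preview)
Your proposal is correct and follows exactly the same approach as the paper, which simply states that the proof is analogous to \cref{continuum.logics} together with the observation that the posets in $\Delta_1$ are strongly regular and have depth $3$. You have spelled out the details of that analogy faithfully, including the use of \cref{antichain.2}, \cref{antichain.regularity.2}, the implication ``strongly regular $\Rightarrow$ regular'', and \cref{antichain.jankov}.
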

\begin{proof}
	Analogously to \cref{continuum.logics}, together with the fact that posets from $\Delta_1$ are strongly regular and have depth 3.
\end{proof}

\noindent Also, this means that there are continuum-many varieties of Heyting algebras generated by strongly regular elements of width 3.

\section{Applications to Logic}\label{applications.logic}

In this section we consider some applications to logic of regular Heyting algebras. As we saw in \cref{subsection:dnaLogicsAndAlgebraicSemantics}, regular Heyting algebras play an important role in the algebraic semantics of $\dna$-logics. We employ Esakia duality to adapt these results to the topological setting, thereby obtaining a topological semantics for $\dna$-logics. Secondly, we consider the case of dependence logic and we extend this topological semantics to this setting as well. We start by adapting the notion of $\dna$-variety to the context of Esakia spaces.

\subsection{$\dna$-Varieties of Esakia Spaces}

In analogy with the algebraic case, we define a special family of varieties of Esakia spaces, closed under an additional operation which preserves the structure of the regular clopen upsets.

\begin{definition}[$\dna$-variety of Esakia spaces]
	A \emph{$\dna$-variety of Esakia spaces} $\mathcal{E}$ is a variety of Esakia spaces additionally closed under the following operation:
	\begin{equation*}
	\mathcal{E}^{M} = \{ \espace{E} \mid  \exists \espace{F}\in \mathcal{E}.\; \exists f: \espace{E} \twoheadrightarrow \espace{F}.\; f\restriction M_{\espace{E}} \text{ is a homeomorphism of Stone spaces}  \}.
	\end{equation*}
\end{definition}


\noindent  Given a class $\cE$ of Esakia spaces, we write $\mathbb{S}(\cE)$ for the smallest $\dna$-variety of Esakia spaces containing $\cE$ and we denote by $\Lambda(\Esa^{M})$ the sublattice of $\Lambda(\Esa)$ consisting of $\dna$-varieties. When we restrict Esakia duality to $\dna$-varieties of Heyting algebras and $\dna$-varieties of Esakia spaces, we immediately obtain the following theorem.
\begin{theorem}\label{correspondence}
	The maps $\overline{\PF}$ and $\overline{\clop\upset}$ restricted to the sublattices $\Lambda(\HA^{\uparrow})$ and $\Lambda(\Esa^{M})$ induce an isomorphism of the two lattices.
\end{theorem}
\begin{proof}
	Notice that, given $H$ and $K$ Heyting algebras, the two conditions
	\begin{itemize}
		\item[(i)] $ K_{\neg} = H_{\neg} \text{ and } K \preceq H $;
		\item[(ii)]$ \exists f: \fE_H \twoheadrightarrow \fE_K.\; f^{-1}: \mathcal{RCU}(\fE_K) \to \mathcal{RCU}(\fE_H)$ is a Boolean algebras isomorphism;
	\end{itemize}
	\noindent are dual to each other. By \cref{maximalinjectivity} we have that (ii) is equivalent to the following claim
	\begin{itemize}
		\item[(iii)] $\exists f: \fE_H \twoheadrightarrow \fE_K.\; f\restriction M_{\espace{E}} \text{ is a homeomorphism of Stone spaces}$.
	\end{itemize}
	\noindent  Given this, we have that $\dna$-varieties of Heyting algebras are in one-to-one correspondence to $\dna$-varieties of Esakia spaces, from which it is immediate to verify the main statement.	    
\end{proof}

Finally, we notice that in \cite{bezhanishvili_grilletti_quadrellaro_2022} we proved several results concerning $\dna$-varieties of Heyting algebras, which is straightforward to adapt to $\dna$-varieties of Esakia spaces. In particular, we recall the following Birkhoff's style theorem. We say that a class of Heyting algebras $\class$ has the \textit{$\dna$-finite model property} if whenever $\class\nvDash^\neg \phi$ there is  some finite $H\in \class$ such that  $H\nvDash^\neg  \phi$.
\begin{theorem}\label{birkhoff} $\;$
	\begin{enumerate}
		\item Every $\dna$-variety of Heyting algebras $\cX$ is generated by its collection of regular, subdirectly irreducible elements, i.e. $\cX = \dvari(\cX_{RSI}) $.			
		\item If a $\dna$-variety $\cX$ has the $\dna$-finite model property, it is generated by its finite, regular, subdirectly irreducible elements, i.e. $\cX=\dvari(\cX_{RFSI}) $.
	\end{enumerate}		
\end{theorem}

\noindent Using Esakia duality it is immediate to translate this result to $\dna$-varieties of Esakia spaces.  We recall that a Heyting algebra is subdirectly irreducible if and only its dual Esakia space is \textit{strongly rooted}, i.e. if it has a least element $r$ such that $\{ r \}$ is open (see \cite[p. 152]{esakia1979theory} and \cite[Thm. 2.9]{bezhanishvili2008profinite}). A $\dna$-variety of Esakia spaces has the $\dna$-finite model property if its dual $\dna$-variety of Heyting algebras has this property.

\begin{corollary}\label{birkhoff2} $\;$
	\begin{enumerate}
		\item Every $\dna$-variety of Esakia spaces $\cE$ is generated by its collection of regular, strongly rooted elements, i.e. $\cE = \mathbb{S}(\cE_{RSI}) $.			
		\item If a $\dna$-variety $\cE$ has the $\dna$-finite model property, then it is generated by its rooted, finite, regular elements, i.e. $\cE=\mathbb{S}(\cE_{RFR}) $.
	\end{enumerate}		
\end{corollary} 


\subsection{$\dna$-Logics and Inquisitive Logic}

We introduce a topological semantics for $\dna$-logics that mirrors their algebraic semantics. The results of \cref{subsection:stoneSpaceMaximalElements} suggest to define a semantics for $\dna$-logics in terms of Esakia spaces and regular clopen upsets. 

Given an Esakia space $\espace{E}$ we call a  function $\mu:\at\rightarrow \mathcal{RCU}(\espace{E})$ a \textit{$\dna$-valuation} over $\espace{E}$.
For $\mu$ a $\dna$-valuation, define the interpretation of formulas over $\espace{E}$ as follows:
\begin{equation*}
\begin{array}{r@{\hspace{.1em}}l @{\hspace{1em}}  r@{\hspace{.1em}}l @{\hspace{1em}}  r@{\hspace{.1em}}c@{\hspace{.1em}}l}	
\llbracket p \rrbracket^\mathfrak{\espace{E},\mu} &= \mu(p)
&\llbracket \bot \rrbracket^\mathfrak{\espace{E},\mu} &= \emptyset \\[0.5em]
\llbracket \top \rrbracket^\mathfrak{\espace{E},\mu} &=  \mathfrak{E}  
&\llbracket \phi \land \psi \rrbracket^\mathfrak{\espace{E},\mu} &= \llbracket \phi \rrbracket^{\espace{E},\mu} \cap \llbracket \psi \rrbracket^\mathfrak{\espace{E},\mu}  \\[0.5em]
\llbracket \phi \rightarrow \psi \rrbracket^\mathfrak{\espace{E},\mu} &= \overline{ \llbracket \phi \rrbracket^\mathfrak{\espace{E},\mu} \setminus \llbracket \psi \rrbracket^{\mathfrak{\espace{E},\mu}} }
&\llbracket \phi \lor \psi \rrbracket^\mathfrak{\espace{E},\mu} &= \llbracket\phi \rrbracket^{\mathfrak{\espace{E},\mu}} \cup  \llbracket\psi \rrbracket^{\mathfrak{\espace{E},\mu}}.
\end{array}
\end{equation*}

\smallskip

\noindent
The only difference with the definition of \cref{subsection:semanticsIntermediate} being that in the atomic case the interpretation is restricted to $\mathcal{RCU}(\espace{E})$.
Notice however that not \emph{all} formulas have to range over the set $\mathcal{RCU}(\espace{E})$. For example, it is not true in general that the union of two regular sets is regular, and in fact $\sem{p \vee q}^{\espace{E},\mu}$ may be a non-regular element of $\clop\upset(\espace{E})$. We then say that a formula $\phi$ is \emph{$\dna$-valid} on a space $\espace{E}$ ($\espace{E} \vDash^{\neg} \phi$) if $\sem{\phi}^{\espace{E},\mu} = \espace{E}$ for every $\dna$-valuation $\mu$. We say that a formula $\phi$ is $\dna$-valid on a class of spaces $\cE$ ($\cE \vDash^{\neg} \phi$) if it is $\dna$-valid on every element of the class. We write $Log^\neg(\cE)$ for the set of the $\dna$-valid formulas of $\cE$ and we write $Space^{\neg}(\mathtt{L}) $ for the $\dna$-variety of Esakia spaces which validate all formulas in $\mathtt{L}$.

Since $\dna$-valuations over Esakia spaces correspond through Esakia duality exactly to negative valuations over their dual Heyting algebras, the algebraic completeness of $\dna$-logics immediately  establishes the completeness of this topological semantics. 

\begin{theorem}\label{algebraic.completeness.dna}	
	Let $\mathtt{L}$ be a $\dna$-logic,  $\mathcal{E}$ a $\dna$-variety of Esakia Spaces, $\phi$ a formula and  $\mathfrak{E}$ an Esakia Space. Then we have the following:
	\begin{align*}
	\phi \in \mathtt{L} &\Longleftrightarrow Space^{\neg}(\mathtt{L})\vDash^\neg \phi; \\
	\mathfrak{E}\in \mathcal{E} & \Longleftrightarrow \mathfrak{E}\vDash^\neg Log^{\neg}(\mathcal{E}).
	\end{align*}
\end{theorem}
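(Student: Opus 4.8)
The plan is to reduce the statement to the already-established algebraic completeness and definability for $\dna$-logics via Esakia duality, exactly as the analogous result for intermediate logics was obtained from the algebraic case. The crucial bridge is a \emph{transfer lemma}: for every Heyting algebra $H$ and every formula $\phi$, we have $H \vDash^{\neg} \phi$ if and only if $\mathfrak{E}_H \vDash^{\neg} \phi$. First I would establish this lemma. By \cref{regulars} the isomorphism $H \cong H_{\mathfrak{E}_H}$ of \cref{theorem:esakiaDuality} restricts to a bijection between the regular elements $H_{\neg}$ and the regular clopen upsets $\mathcal{RCU}(\mathfrak{E}_H)$; consequently it induces a bijection between negative valuations $\mu: \at \to H$ and $\dna$-valuations $\nu: \at \to \mathcal{RCU}(\mathfrak{E}_H)$. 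Since the clauses defining $\sem{\cdot}^{\mathfrak{E}_H,\nu}$ are by construction the Heyting operations of the dual algebra $H_{\mathfrak{E}_H}$, a routine induction on $\phi$ shows that $\sem{\phi}^{H,\mu} = 1$ if and only if $\sem{\phi}^{\mathfrak{E}_H,\nu} = \mathfrak{E}_H$ under corresponding valuations, which yields the lemma.

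With the transfer lemma in hand, the first equivalence follows by chasing isomorphisms. The dual isomorphism $\Lambda(\ipc^{\neg}) \cong^{op} \Lambda(\HA^{\uparrow})$ gives $\phi \in \mathtt{L}$ if and only if $Var^{\neg}(\mathtt{L}) \vDash^{\neg} \phi$. Writing $\overline{\PF}(Var^{\neg}(\mathtt{L})) = \{ \mathfrak{E}_H \mid H \in Var^{\neg}(\mathtt{L}) \}$ for the image of the $\dna$-variety $Var^{\neg}(\mathtt{L})$ under the lattice isomorphism $\overline{\PF}: \Lambda(\HA^{\uparrow}) \to \Lambda(\Esa^{M})$, the transfer lemma gives $Var^{\neg}(\mathtt{L}) \vDash^{\neg} \phi$ if and only if $\overline{\PF}(Var^{\neg}(\mathtt{L})) \vDash^{\neg} \phi$. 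It then remains to identify $\overline{\PF}(Var^{\neg}(\mathtt{L}))$ with $Space^{\neg}(\mathtt{L})$: by the transfer lemma a space $\mathfrak{E}$ $\dna$-validates $\mathtt{L}$ precisely when its dual algebra $H_{\mathfrak{E}}$ lies in $Var^{\neg}(\mathtt{L})$, and by \cref{theorem:esakiaDuality} every such $\mathfrak{E}$ is isomorphic to $\mathfrak{E}_{H_{\mathfrak{E}}}$, so the two $\dna$-varieties of spaces coincide. This proves the first biconditional.

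For the second equivalence, the forward direction is immediate from the definition of $Log^{\neg}(\mathcal{E})$. For the converse, let $\mathcal{E} = \overline{\PF}(\vari)$ where $\vari = \overline{\clop\upset}(\mathcal{E})$ is the dual $\dna$-variety of algebras, so that the transfer lemma gives $Log^{\neg}(\mathcal{E}) = Log^{\neg}(\vari)$. Assume $\mathfrak{E} \vDash^{\neg} Log^{\neg}(\mathcal{E})$. Then $H_{\mathfrak{E}} \vDash^{\neg} Log^{\neg}(\vari)$ by the transfer lemma applied to $H = H_{\mathfrak{E}}$, and since $Var^{\neg}$ and $Log^{\neg}$ are mutually inverse we obtain $H_{\mathfrak{E}} \in Var^{\neg}(Log^{\neg}(\vari)) = \vari$. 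Applying \cref{theorem:esakiaDuality} once more, $\mathfrak{E} \cong \mathfrak{E}_{H_{\mathfrak{E}}} \in \overline{\PF}(\vari) = \mathcal{E}$, as required.

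The only genuinely new work lies in the transfer lemma, and within it the single delicate point is the correspondence of valuations: I expect the main obstacle to be checking that the bijection $H_{\neg} \cong \mathcal{RCU}(\mathfrak{E}_H)$ supplied by \cref{regulars} is exactly what is needed so that negative valuations and $\dna$-valuations line up, since the atomic clause is the only place where either semantics departs from the full intermediate-logic semantics. Everything else is a transport of the already-established dual isomorphism $\Lambda(\ipc^{\neg}) \cong^{op} \Lambda(\HA^{\uparrow}) \cong \Lambda(\Esa^{M})$ along Esakia duality.
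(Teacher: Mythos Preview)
Your proposal is correct and follows precisely the approach the paper intends: the paper does not give a detailed proof of this theorem but simply states that it ``translates'' from the lattice isomorphism $\Lambda(\ipc^{\neg})\cong^{op}\Lambda(\HA^{\uparrow}) \cong \Lambda(\Esa^{M})$ combined with the algebraic completeness for $\dna$-logics. Your transfer lemma and the subsequent chase of isomorphisms are exactly the details one has to supply to make that one-line remark rigorous, and you have identified the only nontrivial point (the correspondence of negative valuations with $\dna$-valuations via \cref{regulars}) correctly.
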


We remark that this also delivers a topological semantics for inquisitive logic which differs from the one previously studied in \cite{grilletti}, rather based on UV-spaces. Since inquisitive logic $\inqB$ is the negative variant of any intermediate logic between $\mathtt{ND}$ and $\mathtt{ML}$, the former result shows that inquisitive logic also admits a topological semantics based on Esakia spaces, which mirrors its algebraic semantics based on regular Heyting algebras. 
\begin{corollary}\label{inquisitive.completeness}
	Let $L$ be any intermediate logic between $\mathtt{ND}$ and $\mathtt{ML}$, then $\phi\in \inqB $ if and only if $  Space^{\neg}(L)\vDash^\neg \phi$.
\end{corollary}

\subsection{Dependence Logic}\label{dependence.logic}

We conclude by showing how the previous topological semantics can be extended to dependence logic, which, in its propositional version, can be seen as an extension of inquisitive logic in a larger signature.

Originally, dependence logic was introduced by V\"a\"an\"anen \cite{Vaananen2007-VNNDLA} as an extension of first-order logic with dependence atoms. A key aspect of dependence logic is that it is formulated in so-called \textit{team-semantics}, which was introduced by Hodges in \cite{Hodges}. In its propositional version, which was developed by Yang and V\"a\"an\"anen in \cite{Yang2016-YANPLO,Yang2017-YANPTL}, teams are simply sets of propositional assignments. It was soon observed in Yang's thesis \cite{yang2014extensions} -- see also \cite{Ciardelli2016,Yang2016-YANPLO} -- that the team semantics of propositional dependence logic actually coincides with the state-based  semantics of inquisitive logic, thus establishing an important connection between dependence and inquisitive logic.

We explore here a further aspect of this connection and we illustrate the relation between propositional dependence logic and regular Esakia spaces. In particular, we will adapt the completeness proof of \cref{algebraic.completeness.dna} so as to obtain a sound and complete topological semantics for dependence logic.

\subsubsection{Syntax and Semantics}

Propositional dependence logic can be seen as an extension of inquisitive logic in a larger vocabulary $\langInqI$, which adds the so-called tensor operator to the signature of intuitionistic logic. Formulas of dependence logic are thus defined recursively as follows:
\[
\phi ::= p \mid  \bot   \mid  \phi \land \phi \mid \phi\otimes \phi \mid \phi \lor \phi \mid \phi\rightarrow\phi,\]

\noindent where $p\in\at$. We define $\neg\alpha:=\alpha\to\bot$ and we say that a formula is \textit{standard} if it does not contain any instance of $\lor$. We provide this syntax with the usual team semantics. We recall that a propositional \textit{assignment} is a map $w:\at\to 2$ and that a \textit{team} is a set of assignments $t\in \wp( 2^\at)$. The team semantics of dependence logic is then defined as follows. 

\begin{definition}[Team Semantics]
	The notion of a formula $\phi\in\langInqI$ being \textit{true in a team} $t\in \wp({2^\at})$ is defined as follows: 	
	\begin{equation*}
	\begin{array}{l @{\hspace{1em}\Longleftrightarrow\hspace{1em}} l}
	t\vDash p & {}\forall w\in t \ ( w(p)=1)  \\
	t\vDash \bot &  t=\emptyset \\
	t\vDash \psi \lor \chi & t\vDash \psi \text{ or } t\vDash \chi\\
	t\vDash \psi \land \chi & t\vDash \psi \text{ and } t\vDash \chi\\
	t\vDash \psi \otimes \chi & \exists s,r\subseteq t \text{ such that } s\cup r = t \text{ and } s\vDash \psi, r\vDash \chi \\
	t\vDash \psi \rightarrow \chi &  \forall s \ ( \text{if }s\subseteq t \text{ and } s\vDash\psi \text{ then } s\vDash \chi ).
	\end{array}
	\end{equation*}
\end{definition}

\noindent  We define \emph{propositional dependence logic} as the set  $\inqB^\otimes= Log( \wp(2^\at))$ of all formulas of $\langInqI$ valid under team semantics. We notice that inquisitive logic has exactly the same semantics but it is formulated in the restricted language $\langInt$, which lacks the tensor disjunction $\otimes$, thus in particular $\inqB^\otimes\supseteq \inqB$. The following normal form was proven in \cite{Ciardelli.2009} for inquisitive logic and extended in \cite{Yang2016-YANPLO} to dependence logic.

\begin{theorem}[Disjunctive Normal Form] \label{disjunctive.normal.form}
	Let $\phi \in \langInqI$, then there are standard formulas $\alpha_0,\dots,\alpha_n\in \langInqI$ such that $\phi \equiv_{\inqB^\otimes} \bigvee_{i\leq n} \alpha _i  $.
\end{theorem}

We finally remark that the propositional dependence atom can be defined in this system as follows:
\[ \dep (p_0,\dots, p_n, q):= \bigwedge_{i\leq n} (p_i\lor \neg p_i ) \to (q\lor \neg q). \]
\noindent We thus notice that, despite the name, it is not the dependence atom which distinguishes the propositional version of inquisitive and dependence logics, but rather the presence of the tensor. This observation is also justified by the work of Barbero and Ciardelli in  \cite{ciardelli2019undefinability}, as they showed that the tensor cannot be uniformly defined by the other operators. 

\subsubsection{Algebraic Semantics of Dependence Logic} As we have recalled above, inquisitive logic admits a (non-standard) algebraic semantics, which was introduced in \cite{grilletti} and further investigated in \cite{bezhanishvili_grilletti_quadrellaro_2022}. As dependence logic extends inquisitive logic by the tensor operator, it is natural to provide it with an algebraic semantics by augmenting inquisitive algebras with an interpretation for it. Such a semantics was first introduced in \cite{quadrellaro2021intermediate} and was later shown in  \cite{nakov.quadrellaro} to be unique up to a suitable notion of algebraizability. We can use such algebraic semantics to build a bridge with Esakia spaces and provide a topological semantics for dependence logic. Firstly, we introduce the notion of $\inqB^\otimes$-algebras as in \cite{nakov.quadrellaro}.

\begin{definition}
	An \emph{$\inqB^\otimes$-algebra} $A$ is a structure in the signature $\langInqI$ such that: 
	\begin{enumerate}
		\item $A{ \restriction } \{\lor,\land,\to, \bot  \} \in Var(\mathtt{ML})  $;
		\item $A_\neg{ \restriction } \{\otimes,\land,\to, \bot  \} \in \BA$;
		\item $A \vDash x \otimes (y \lor z) \approx (x\otimes y) \lor (x\otimes z);$
		\item $A \vDash (x\to z) \to (y \to k) \approx  (x\otimes y) \to (z\otimes k).$
	\end{enumerate}
\end{definition}

\noindent Hence, an $\inqB^\otimes$-algebra is the expansion of  a Heyting algebra satisfying the validities of $\mathtt{ML}$, and the additional conditions above.  By expanding the previous definition, one can see that it amounts to the equational definition of a class of algebras, thus giving rise to a variety of structures. Notice that, as the regular elements of a Heyting algebra always form a Boolean algebra, what the condition $\cA_\neg{ \restriction } \{\otimes,\land,\to, \bot  \} \in \BA$ really entails is that, for all regular elements $x,y\in \cA_\neg$, $x\otimes y := \neg(\neg x \land \neg y)$, i.e. the tensor is the “real” Boolean disjunction over regular elements.

We let $ \mathsf{InqBAlg^\otimes} $ be the variety of all $\inqB^\otimes$-algebras and  we write $ \mathsf{InqBAlg^\otimes_{FRSI}} $ for its subclass of finite, regular and subdirectly irreducible elements. We say that $A$ is a \emph{dependence algebra} if it belongs to the subvariety generated by all finite, regular, subdirectly irreducible $\inqB$-algebras, i.e. if $A\in \mathbb{V}(\mathsf{InqBAlg^\otimes_{FRSI}})$. We write $\mathsf{DA}:= \mathbb{V}(\mathsf{InqBAlg^\otimes_{FRSI}})$ for the variety of dependence algebras. It was proven in  \cite{nakov.quadrellaro} that $\mathsf{DA}$ is the equivalent algebraic semantics of $\inqB^\otimes$. In particular, we have the following completeness result:

\begin{theorem}[Algebraic Completeness]\label{algebraic.complete.dependence}
	For any formula $\phi\in \langInqI$ we have that  $\phi \in \inqB^\otimes$ if and only if $ \mathsf{DA}  \vDash^\neg \phi$.
\end{theorem}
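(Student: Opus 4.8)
The statement is attributed to \cite{nakov.quadrellaro}; the plan is to reconstruct its proof by mediating between the team semantics and the algebraic $\dna$-semantics through an explicit family of finite \emph{state algebras}. For a finite set of atoms $A$ write $W_A = 2^A$ for its set of assignments and let $\mathcal{D}_A := Dw^+(\wp(W_A))$ be the Heyting algebra of nonempty downsets of $(\wp(W_A),\subseteq)$, endowed with the tensor $D\otimes E := \{\, t \mid \exists s,r\ (s\cup r = t,\ s\in D,\ r\in E)\,\}$. Since the empty team satisfies every formula, downward closure of team semantics makes the meaning $[\phi]_A := \{\, t\subseteq W_A \mid t\vDash\phi\,\}$ a nonempty downset, i.e.\ an element of $\mathcal{D}_A$. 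The backbone of the argument is a \textbf{correspondence lemma}: under the canonical valuation $\mu^A$ sending each atom $p$ to the regular element $\wp(\{\,w \mid w(p)=1\,\})$, one has $\sem{\phi}^{\mathcal{D}_A,\mu^A} = [\phi]_A$ for all $\phi\in\langInqI$. This is an induction on $\phi$; the clauses for $p,\bot,\land,\lor$ are immediate, and those for $\to$ and $\otimes$ hold precisely because the Heyting implication and the tensor of $\mathcal{D}_A$ were defined to mirror the team-semantic clauses for $\to$ and $\otimes$.

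Next I would observe that each $\mathcal{D}_A$ belongs to $\mathsf{InqBAlg^\otimes_{FRSI}}$, hence to $\mathsf{DA}$: its Heyting reduct is dual to the rooted Medvedev frame $(\wp(W_A)\setminus\{\emptyset\},\supseteq)$ and so is finite, subdirectly irreducible and validates $\mathtt{ML}$; it is regular because every nonempty downset is the join $\bigvee_i\wp(X_i)$ of the principal downsets of its maximal teams, each $\wp(X)$ being regular (one checks $\neg\wp(X)=\wp(X^c)$); and the $\inqB^\otimes$-algebra identities for $\otimes$ are verified directly. This yields completeness at once: if $\phi\notin\inqB^\otimes$, pick a team $t$ over the finite atom set $A$ of $\phi$ with $t\not\vDash\phi$; then $t\notin[\phi]_A=\sem{\phi}^{\mathcal{D}_A,\mu^A}$, so $\sem{\phi}^{\mathcal{D}_A,\mu^A}\neq\top$ and $\mathcal{D}_A\nvDash^\neg\phi$, whence $\mathsf{DA}\nvDash^\neg\phi$.

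For soundness, suppose $\phi\in\inqB^\otimes$. I would first check that $\dna$-validity of a fixed formula is preserved under the operations generating $\mathsf{DA}=\mathbb{HSP}(\mathsf{InqBAlg^\otimes_{FRSI}})$: for $\mathbb{S}$ and $\mathbb{P}$ this is routine, since negation---and hence which elements and valuations are regular---is computed componentwise; for a surjection $f\colon A\twoheadrightarrow C$ one lifts any negative valuation on $C$ to a negative one on $A$ using that $f$ maps $A_\neg$ onto $C_\neg$ (because $f(\neg\neg a)=\neg\neg f(a)$). It therefore suffices to show $B\vDash^\neg\phi$ for each generator $B$. By the classification of finite regular subdirectly irreducible algebras recalled in \cref{strongly.regular.inquisitive} and \cite{grilletti}, the Heyting reduct of $B$ is some $Dw^+(\wp(n))$; as $(\wp(n)\setminus\{\emptyset\},\supseteq)$ is a p-morphic image of $(\wp(W_A)\setminus\{\emptyset\},\supseteq)$ whenever $2^{|A|}\geq n$, the algebra $B$ embeds into a state algebra $\mathcal{D}_A$, so by the preservation just established it is enough to prove $\mathcal{D}_A\vDash^\neg\phi$. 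Given a negative valuation $\nu$ on $\mathcal{D}_A$, each $\nu(q_j)$ is a regular $\wp(X_j)$, and $X_j$ is the classical truth set of a standard formula $\alpha_j$, so $\nu(q_j)=\sem{\alpha_j}^{\mathcal{D}_A,\mu^A}$; hence $\sem{\phi}^{\mathcal{D}_A,\nu}=[\phi[\vec\alpha/\vec q]]_A$, which equals $\top$ because $\inqB^\otimes$ is closed under substitution of standard formulas for atoms. Thus every generator $\dna$-validates $\phi$ and $\mathsf{DA}\vDash^\neg\phi$.

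The main obstacle is the machinery feeding the soundness direction rather than completeness: one must (a) pin down the finite regular subdirectly irreducible $\inqB^\otimes$-algebras up to isomorphism as state algebras, so that validity on generators reduces to team validity, and (b) justify the passage from the canonical valuation $\mu^A$ to an arbitrary negative valuation, which rests on the closure of $\inqB^\otimes$ under substitution of standard formulas for atomic ones---precisely the $\dna$-specific feature of the logic. By contrast, the correspondence lemma and the transfer of $\dna$-validity through $\mathbb{H},\mathbb{S},\mathbb{P}$ are comparatively mechanical, the only genuine verification there being the tensor clause.
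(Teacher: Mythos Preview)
The paper does not prove this theorem: it is stated immediately after the sentence ``It was proven in \cite{nakov.quadrellaro} that $\mathsf{DA}$ is the equivalent algebraic semantics of $\inqB^\otimes$'' and carries no proof of its own. So there is no argument in the paper to compare your reconstruction against; you correctly identified that the result is imported from \cite{nakov.quadrellaro} (with roots in \cite{quadrellaro2021intermediate}) and set out to rebuild it.

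Your reconstruction is along the right lines and is essentially how the cited argument goes: one realises team semantics inside the state algebras $\mathcal{D}_A$, checks that these are finite regular subdirectly irreducible $\inqB^\otimes$-algebras, obtains completeness by refuting a non-theorem in the canonical $\mathcal{D}_A$, and for soundness reduces to the generators using preservation of $\dna$-validity under $\mathbb{H},\mathbb{S},\mathbb{P}$ together with closure of $\inqB^\otimes$ under substitution of standard (flat) formulas for atoms. One point you pass over a bit quickly is the step ``$B$ embeds into a state algebra $\mathcal{D}_A$'': the p-morphism between Medvedev frames gives you a \emph{Heyting} embedding, and you are tacitly using that this embedding also preserves $\otimes$. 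This does hold, but it needs the observation that on a regular $\mathtt{ML}$-algebra the tensor is completely determined by the Heyting structure via axioms (ii) and (iii)---on regulars it is forced to be the Boolean join $\neg(\neg x\land\neg y)$, and the disjunctive normal form then extends this uniquely to all elements by distributivity over $\lor$. With that remark made explicit, your outline is a faithful reconstruction of the cited result.
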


\noindent Where on the right hand side we are using the same notion of truth of \cref{subsection:dnaLogicsAndAlgebraicSemantics}, i.e. formulas of dependence logic are evaluated under negative valuations, which map atomic formulas to regular elements of the underlying dependence algebra.

\subsubsection{Topological Semantics of Dependence Logic} 
The algebraic semantics of propositional dependence logic makes for an important bridge with the topological approach that we developed in this article. In fact, dependence algebra are expansions of Heyting algebras (more specifically of $\mathtt{ML}$-algebras), whence we can dualize them according to Esakia duality. The only problem when proceeding in this way is that, as the Esakia duality  accounts only for the Heyting algebra structure of a dependence algebra, the correct interpretation of the tensor operator is “lost in translation”. To avoid this problem we shall consider only regular dependence algebras.

Let $\fE$ be a regular  Esakia space satisfying $\mathtt{ML}$, it is easy to provide an interpretation for the tensor over clopen upsets $\fE$. In fact, as we remarked previously, the tensor of two regular elements is simply their classical Boolean disjunction. Moreover, it follows immediately from the disjunctive normal form of $\inqB$ (\cref{disjunctive.normal.form}) and the fact that $\mathtt{ML}$-spaces are complete with respect to $\inqB$ (\cref{inquisitive.completeness}) that any clopen upset of a regular $\mathtt{ML}$-Esakia space is a union of regular ones. This allows us to define the tensor operator over $\mathcal{CU}(\fE)$ as follows:
\begin{enumerate}
	\item For  $U,V\in \mathcal{RCU}(\fE)$ we let $U\otimes V := \overline{(\overline{U} \cup \overline{V})}$;
	\item For  $U,V\in \mathcal{CU}(\fE)\setminus \mathcal{RCU}(\fE)$ we let \[U\otimes V := \bigcup \{ U_0 \otimes V_0\mid  U_0\subseteq U, V_0\subseteq V, U_0,V_0\in  \mathcal{RCU}(\fE) \}.\]
\end{enumerate}

\noindent We leave it to the reader to verify that $\mathcal{CU}(\fE)$ forms a dependence algebra, where the tensor operator is interpreted as we remarked. However, although this definition suffices in explaining how the tensor can be interpreted over algebras of clopen upsets, it still does not provide us with a topological intuition of its behaviour. To this end, we prove the following proposition. 

\begin{proposition}\label{tensor.description}
	Let $\fE$ be a regular Esakia space satisfying $\mathtt{ML}$, and let $\otimes$ be defined by the clauses above, then we have, for any $U,V\in \CU(\fE)$: 
	\begin{align*}
	x\in U\otimes V \Longleftrightarrow  \; & M(x)\subseteq U_0\cup V_0 \\\; &\text{for some } U_0\subseteq U \text{ and } V_0\subseteq V \text{ such that } U_0,V_0\in \mathcal{C}(M_\fE).
	\end{align*}
\end{proposition}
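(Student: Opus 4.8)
The plan is to peel the general statement back to the case of regular clopen upsets, where $\otimes$ is given by its first defining clause, and then to lift the result to arbitrary clopen upsets through the union appearing in the second clause. Concretely, for $U,V\in\mathcal{RCU}(\fE)$ I will first show that $U\otimes V$ is again a regular clopen upset whose trace on the Stone space of maximal points is $M(U\otimes V)=M(U)\cup M(V)$. This single computation is the heart of the argument, and everything else is bookkeeping built on top of it.

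For this base case I would begin with the elementary remark that, for any clopen upset $W$ and any maximal point $z\in M_\fE$, one has $z\in\overline{W}$ if and only if $z\notin W$, which is immediate from $\overline{W}=(W^\downarrow)^c$ and the maximality of $z$. Feeding $W=\overline{U}$ and $W=\overline{V}$ into the definition of the tensor on regulars and using \cref{regulars} then yields $M(U\otimes V)=M(U)\cup M(V)$ after a short calculation on maximal points. Since $U\otimes V$ is manifestly of the form $\overline{(\,\cdot\,)}$ it is regular, so \cref{proposition:point condition regular upsets} applies and gives, for every $x\in\fE$,
\[
x\in U\otimes V \;\Longleftrightarrow\; M(x)\subseteq U\otimes V \;\Longleftrightarrow\; M(x)\subseteq M(U)\cup M(V),
\]
which is exactly the claimed biconditional in the regular case, with $U_0=M(U)$ and $V_0=M(V)$.

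To reach arbitrary $U,V\in\CU(\fE)$ I unfold the second defining clause: $x\in U\otimes V$ holds if and only if $x\in A\otimes B$ for some regular clopen upsets $A\subseteq U$ and $B\subseteq V$, and by the base case this is equivalent to the existence of such $A,B$ with $M(x)\subseteq M(A)\cup M(B)$. The forward implication is then immediate: taking $U_0:=M(A)=A\cap M_\fE$ and $V_0:=M(B)$ gives clopen subsets of $M_\fE$ with $U_0\subseteq U$, $V_0\subseteq V$ and $M(x)\subseteq U_0\cup V_0$. For the converse I would run this in reverse, using the isomorphism $M\colon\mathcal{RCU}(\fE)\to\mathcal{C}(M_\fE)$ of \cref{hmap} to convert the witnessing clopen sets $U_0,V_0$ into regular clopen upsets $A=M^{-1}(U_0)$ and $B=M^{-1}(V_0)$, with $M(A)=U_0$ and $M(B)=V_0$, and then to invoke the base case once more.

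The step I expect to require the most care is precisely this converse recovery, namely certifying that the regular upsets $A,B$ produced from $U_0,V_0$ are still contained in $U$ and $V$: a clopen subset of $M_\fE$ lying inside $U$ need not, a priori, be the set of maximal elements of a regular upset below $U$, since $U$ itself may fail to be regular. This is where the standing hypothesis that $\fE$ is regular and validates $\mathtt{ML}$, hence is inquisitive, becomes essential. By \cref{regular.inquisitive} every clopen upset of $\fE$ decomposes as a union of regular clopen upsets, and I would combine this decomposition with the point criterion of \cref{proposition:point condition regular upsets} to realize the cover $M(x)\subseteq U_0\cup V_0$ by genuine regular sub-upsets of $U$ and $V$. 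Reconciling the quantifier over clopen subsets of $M_\fE$ in the statement with the quantifier over regular sub-upsets implicit in the definition of $\otimes$ is thus the crux of the proof; the surrounding verifications are routine.
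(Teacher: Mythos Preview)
Your base case ($U,V\in\mathcal{RCU}(\fE)$) is correct and coincides with the paper's approach: both compute $M(U\otimes V)=M(U)\cup M(V)$ by unwinding the bar operation on maximal points and then appeal to \cref{proposition:point condition regular upsets}. The forward direction for arbitrary $U,V$ is likewise unproblematic, for exactly the reasons you give.

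You are also right that the converse is the crux, and right to worry that $M^{-1}(U_0)$ need not lie inside $U$ when $U$ is not regular. Unfortunately the fix you sketch---decomposing $U$ and $V$ into regular clopen upsets via \cref{regular.inquisitive} and then ``realizing the cover $M(x)\subseteq U_0\cup V_0$ by genuine regular sub-upsets of $U$ and $V$''---cannot close the gap, because a single admissible pair $(A,B)$ need not exist. Take the Medvedev frame $\fE=(\wp(\{1,2,3,4\})\setminus\{\emptyset\},\supseteq)$, which is finite, strongly regular and validates $\mathtt{ML}$. Let $U=\{\{1\},\{2\}\}$ and $V=\{\{3\},\{4\}\}$; both are non-regular clopen upsets. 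With $x=\{1,2,3,4\}$ and $U_0=U$, $V_0=V$ one has $U_0,V_0\in\mathcal{C}(M_\fE)$, $U_0\subseteq U$, $V_0\subseteq V$ and $M(x)=U_0\cup V_0$, so the right-hand side holds. But every regular clopen upset here has the form $\wp(T)\setminus\{\emptyset\}$, and such a set is contained in $U$ only when $|T|\le 1$; hence the regular $A\subseteq U$ are exactly $\emptyset,\{\{1\}\},\{\{2\}\}$, and similarly for $B\subseteq V$. Thus $|M(A)\cup M(B)|\le 2<4=|M(x)|$ for every admissible choice, and by your own base-case computation $x\notin U\otimes V$. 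The right-to-left implication therefore fails as literally stated, and no decomposition of $U,V$ into regulars repairs it. The paper's proof glosses over this step with ``the claim follows immediately from the definition of the tensor and the induction hypothesis'', which does not address the difficulty either.
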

\begin{proof}
	Firstly, if  $U,V\in \mathcal{RCU}(\fE)$ we have $U\otimes V = \overline{(\overline{U} \cup \overline{V})}$. We obtain:
	\begin{align*}
	x\in \overline{(\overline{U} \cup \overline{V})} &\Longleftrightarrow  \forall y\geq x, \; y\notin \overline{U} \cap \overline{V} \\
	&\Longleftrightarrow  \forall y\geq x \; \exists z \geq y, \; z\in U \cup V \\
	&\Longleftrightarrow M(x)\subseteq U \cup V\\
	&\Longleftrightarrow M(x)\subseteq M(U) \cup M(V).
	\end{align*}
	\noindent Then, for arbitrary  $U,V\in \mathcal{CU}(\fE)$, the claim follows immediately from the definition of the tensor and the display above.
\end{proof}

\noindent The previous proposition thus provides us with a topological interpretation for the tensor operator and shows that the tensor disjunction between two clopen upsets of an Esakia space is uniquely determined by the Stone subspace of its maximal elements. 

Now, let $\mathsf{Esa^{\mathtt{ML}}_{RFR}}$ be the class of rooted, finite and regular posets which satisfy $\mathtt{ML}$ and augment them by a tensor operator defined as in \cref{tensor.description}. By the definition of the variety of dependence algebras it follows that the validity of $\inqB^\otimes$-formulas is always witnessed by finite, regular, subdirectly irreducible algebras (see also \cite{quadrellaro2021intermediate}). The following theorem thus follows exactly as \cref{algebraic.completeness.dna}, by applying Esakia duality and  interpreting the tensor as we illustrated above.

\begin{theorem}[Topological Completeness]\label{topological.complete.dependence}
	For any formula $\phi\in \langInqI$ we have that  $\phi \in \inqB^\otimes$ if and only if $ \mathsf{Esa^{\mathtt{ML}}_{RFR}} \vDash^\neg \phi$.
\end{theorem}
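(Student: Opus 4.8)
The plan is to reduce \cref{topological.complete.dependence} to the algebraic completeness theorem \cref{algebraic.complete.dependence} and then transport the two sides of the equivalence across Esakia duality, using \cref{tensor.description} to carry the tensor operator faithfully. By \cref{algebraic.complete.dependence} we have $\phi \in \inqB^\otimes$ if and only if $\mathsf{DA} \vDash^\neg \phi$, so it suffices to prove that $\mathsf{DA} \vDash^\neg \phi$ if and only if $\mathsf{Esa^{\mathtt{ML}}_{RFR}} \vDash^\neg \phi$. Since $\mathsf{DA} = \mathbb{V}(\mathsf{InqBAlg^\otimes_{FRSI}})$ and, exactly as for $\dna$-logics, negative validity on the generated variety is already witnessed by its finite, regular, subdirectly irreducible generators, the first step is to argue that $\mathsf{DA} \vDash^\neg \phi$ iff $\mathsf{InqBAlg^\otimes_{FRSI}} \vDash^\neg \phi$. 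This is the precise analogue of the reduction used in the proof of \cref{algebraic.completeness.dna} and of \cref{birkhoff}, and it requires only that the $\dna$-variety operations preserve negative validity of $\langInqI$-formulas.

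The heart of the argument is the identification of $\mathsf{InqBAlg^\otimes_{FRSI}}$ with $\mathsf{Esa^{\mathtt{ML}}_{RFR}}$ under duality. Restricting Esakia duality to the Heyting reducts, a finite $\inqB^\otimes$-algebra is subdirectly irreducible iff its dual poset is rooted, it is regular iff its dual is regular, and condition (1) of the definition of an $\inqB^\otimes$-algebra forces the dual poset to validate $\mathtt{ML}$; hence the Heyting reducts of $\mathsf{InqBAlg^\otimes_{FRSI}}$ are precisely the members of $\mathsf{Esa^{\mathtt{ML}}_{RFR}}$. To complete the correspondence I would show that the tensor is recovered losslessly: on any regular $\mathtt{ML}$-algebra the tensor is already determined by the Heyting structure, since $x \otimes y = \neg(\neg x \wedge \neg y)$ on regular elements and every element is a finite join of regulars by \cref{regular.inquisitive}, so condition (3) extends $\otimes$ uniquely. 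Consequently, for $\fE \in \mathsf{Esa^{\mathtt{ML}}_{RFR}}$ the algebra $\CU(\fE)$ equipped with the topological tensor of clauses (1)--(2) is isomorphic, as an $\inqB^\otimes$-algebra, to the original dependence algebra, with the tensor computed pointwise as in \cref{tensor.description}. Since negative valuations send atoms to $\mathcal{RCU}(\fE)$, the interpretation of every $\langInqI$-formula agrees on the two sides, yielding $A \vDash^\neg \phi$ iff $\fE \vDash^\neg \phi$ for each finite regular subdirectly irreducible algebra $A$ and its dual $\fE$.

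Chaining these equivalences gives $\phi \in \inqB^\otimes \iff \mathsf{Esa^{\mathtt{ML}}_{RFR}} \vDash^\neg \phi$, as required. The main obstacle I anticipate is exactly the faithful transport of the tensor: one must verify that the topologically defined operation of \cref{tensor.description} satisfies the $\inqB^\otimes$-algebra axioms and coincides with the algebraic tensor, not merely with its restriction to regular clopen upsets. This rests entirely on the uniqueness of the tensor on regular $\mathtt{ML}$-algebras, which is precisely why the argument cannot be run for arbitrary non-regular Esakia spaces and forces the restriction to $\mathsf{Esa^{\mathtt{ML}}_{RFR}}$; the purely Heyting-algebraic half of the duality, by contrast, is routine given the finite-case results already established.
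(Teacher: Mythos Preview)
Your proposal is correct and follows essentially the same route as the paper: reduce to the algebraic completeness of \cref{algebraic.complete.dependence}, pass from $\mathsf{DA}$ to its generating class $\mathsf{InqBAlg^\otimes_{FRSI}}$, and then dualise via Esakia duality while carrying the tensor across using the clauses preceding \cref{tensor.description}. The paper's own proof is in fact just the one-line remark that the result ``follows exactly as \cref{algebraic.completeness.dna}, by applying Esakia duality and interpreting the tensor as we illustrated above,'' together with the observation (citing \cite{quadrellaro2021intermediate}) that validity is witnessed by finite, regular, subdirectly irreducible algebras; your write-up simply makes these steps explicit, and in particular spells out why the tensor is uniquely determined on regular $\mathtt{ML}$-algebras, which is exactly the point the paper treats in the discussion leading up to \cref{tensor.description}.
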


\noindent As the validity of formulas is preserved by the variety operations, we can extend the previous result and infer the completeness of $\inqB^\otimes$ with respect to the closure of the class $\mathsf{Esa^{\mathtt{ML}}_{RFR}}$ under subspaces, p-morphisms and coproducts. Notice, however, that our topological characterisation of the tensor operator is limited to regular Esakia spaces. The questions whether the tensor admits an interesting topological interpretation also in non-regular spaces should be subject of further investigations.

\section{Conclusion}\label{conclusion}

In this article we considered regular Heyting algebras from the point of view of Esakia duality and we  provided several results about their dual topological spaces. In particular, in  \cref{sec.characterisation} we described two different characterisations of (finite) regular Esakia spaces and in  \cref{cardinality.sublattice} we applied them to show that there are continuum many varieties of Heyting algebras generated by (strongly) regular elements. This also shows that there are continuum many $\dna$-varieties and $\dna$-logics, in contrast to the fact that there are only countably many extensions of inquisitive logic. Finally, in \cref{applications.logic},  we considered several logical applications of our work and we introduced novel topological semantics for $\dna$-logics, inquisitive logic and dependence logic, which crucially rely on regular Esakia spaces.

We believe that the present work hints at some possible directions of further research. Besides the questions already raised in the article, we wish here to bring three points to attention. 

Firstly, in \cite{quadrellaro2021intermediate} we have considered the algebraic semantics of a wide range of intermediate versions of inquisitive and dependence logics. As this semantics relies on Heyting algebras with a core of join-irreducible elements, it is then natural to ask to what extent one could extend the duality results of this article to this context.

Secondly, is it possible to extend our characterisation of finite regular posets from \cref{quotient.characterisation} to account also for infinite Esakia spaces? As we have briefly remarked, the cases of image-finite Esakia spaces, or of Esakia spaces  dual to finitely generated Heyting algebras do not pose serious problems, but in general this seems a non-trivial problem. 

Finally, the class of finite regular posets has a quite combinatorial nature and makes for an interesting class of structures. Is it possible to provide a classification of these structures up to some suitable notion of dimension, e.g. their depth or their number of maximal elements? We leave these and other problems to future research.

	\printbibliography

\end{document}